\renewcommand{\eprint}[1]{\href{https://arxiv.org/abs/#1}{#1}}
\DeclareMathOperator{\GL}{\mathrm{GL}}
\DeclareMathOperator{\SL}{\mathrm{SL}}
\DeclareMathOperator{\diag}{diag}
\DeclareMathOperator{\Hom}{Hom}
\DeclareMathOperator{\Res}{Res}
\DeclareMathOperator{\Lie}{Lie}
\newcommand{\fg}{\frak{g}}
\newtheorem{Thm}{Theorem}[section]
\newtheorem{Lem}[Thm]{Lemma}
\newtheorem{Prop}[Thm]{Proposition}
\newtheorem{Cor}[Thm]{Corollary}
\newtheorem{Con}[Thm]{Conjecture}
\theoremstyle{definition}
\newtheorem{Def}[Thm]{Definition}
\theoremstyle{remark}
\newtheorem{Rem}[Thm]{Remark}
\newtheoremstyle{named}{}{}{\itshape}{}{\bfseries}{.}{.5em}{#1 #3}
\theoremstyle{named}
\def\Q{\mathbb{Q}}
\def\C{\mathbb{C}}
\def\Z{\mathbb{Z}}
\def\P{\mathbb{P}}
\def\fb{\mathfrak{b}}
\def\g{\mathfrak{g}}
\def\Frenkel:2013uda{\mathfrak{h}}
\def\sl{\mathfrak{sl}}
\def\cE{\mathcal{E}}
\def\cF{\mathcal{F}}
\def\cL{\mathcal{L}}
\def\cO{\mathcal{O}}
\def\cV{\mathcal{V}}
\def\cW{\mathcal{W}}
\def\a{\alpha}
\def\ze{\zeta}
\def\bo{\textbf{o}}
\def\=>{\Longrightarrow}
\def\to{\longrightarrow}
\def\o+{\oplus}
\def\bo+{\bigoplus}
\def\<{\langle}
\def\>{\rangle}
\def\({\left(}
\def\){\right)}
\def\^{\wedge}
\def\+{\dagger}
\def\dd[#1,#2]{\frac{d#1}{d#2}}
\def\del[#1,#2]{\frac{\partial #1}{\partial #2}}
\def\over[#1]{\overline{#1}}
\def\vec[#1]{\overrightarrow{#1}}
\def\mr@ignsp#1 {\ifx\:#1\@empty\else #1\expandafter\mr@ignsp\fi}%
\newcommand{\multiref}[1]{\begingroup
\xdef\mr@no@sparg{\expandafter\mr@ignsp#1 \: }%
\def\mr@comma{}%
\@for\mr@refs:=\mr@no@sparg\do{\mr@comma\def\mr@comma{,}\ref{\mr@refs}}%
\endgroup}
\newcommand{\hypref}[2]{\ifx\href\asklFrenkel:2013udaas #2\else\href{#1}{#2}\fi}
\newcommand{\Secref}[1]{Section~\multiref{#1}}
\tikzset{->-/.style={decoration={
  markings,
  mark=at position .5 with {\arrow{latex}}},postaction={decorate}}}
\tikzset{
    >=latex
    }
\newcommand{\wt}{\widetilde}
\newcommand{\mc}{\mathcal}
\newcommand{\nc}{\newcommand}
\nc{\on}{\operatorname}
\nc{\la}{\lambda}
\nc{\wh}{\widehat}
\nc{\ghat}{\wh\g}
\nc{\mb}{\mathbf}
\begin{document}
\title{$q$-opers, $QQ$-systems, and Bethe Ansatz}

\author[E. Frenkel]{Edward Frenkel}
\address{
Department of Mathematics,
University of California,
Berkeley, CA 94720, USA}

\author[P. Koroteev]{Peter Koroteev}
\address{
Department of Mathematics,
University of California,
Berkeley, CA 94720, USA}

\author[D.S. Sage]{Daniel S. Sage}
\address{
          Department of Mathematics, 
          Louisiana State University, 
          Baton Rouge, LA 70803, USA}

\author[A.M. Zeitlin]{Anton M. Zeitlin}
\address{
          Department of Mathematics, 
          Louisiana State University, 
          Baton Rouge, LA 70803, USA; 
          IPME RAS, St. Petersburg, Russia}


\numberwithin{equation}{section}

\begin{abstract}
  We introduce the notions of $(G,q)$-opers and Miura $(G,q)$-opers,
  where $G$ is a simply connected simple complex Lie group, and prove
  some general results about their structure. We then establish a
  one-to-one correspondence between the set of $(G,q)$-opers of a
  certain kind and the set of nondegenerate solutions of a system of
  Bethe Ansatz equations. This may be viewed as a $q$DE/IM
  correspondence between the spectra of a quantum integrable model
  (IM) and classical geometric objects ($q$-differential
  equations). If $\g$ is simply laced, the Bethe Ansatz equations we
  obtain coincide with the equations that appear in the quantum
  integrable model of XXZ-type associated to the quantum affine
  algebra $U_q \widehat\g$. However, if $\g$ is non-simply laced, then
  these equations correspond to a different integrable model, associated to
  $U_q {}^L\widehat\g$ where $^L\widehat\g$ is the Langlands dual
  (twisted) affine algebra. A key element in this $q$DE/IM
  correspondence is the 
  $QQ$-system that has appeared previously in the study of the ODE/IM
  correspondence and the Grothendieck ring of the category ${\mc O}$
  of the relevant quantum affine algebra.
\end{abstract}

\maketitle

\setcounter{tocdepth}{1}
\tableofcontents


\section{Introduction}

In his celebrated 1931 paper \cite{Bethe:1931hc}, Hans Bethe proposed
a method of diagonalization of the Hamiltonian of the XXX spin chain
model that was introduced three years earlier by Werner Heisenberg.
The elegance and simplicity of his method, dubbed the Bethe Ansatz,
has dazzled several generations of physicists and mathematicians.
Richard Feynman wrote \cite{doi:10.1142/9789814390187_0003}: ``I got
really fascinated by these (1+1)-dimensional models that are solved by
the Bethe ansatz and how mysteriously they jump out at you and work
and you don't know why. I am trying to understand all this better.''
In this paper, we make another attempt to understand all this better.

\subsection{Gaudin model}    \label{gaudin}

To explain our main idea, let us discuss a close relative of the XXX
spin chain: the quantum Gaudin model corresponding to a simple Lie
algebra $\g$. The space of states of this model is a representation of
the corresponding (polynomial) loop algebra $\g[t,t^{-1}]$. Suppose
for simplicity that it is the tensor product $\otimes_{i=1}^N
V_{\la_i}(z_i)$ of finite-dimensional evaluation representations with
highest weights $\la_i$ and distinct evaluations parameters $z_i$. In
this case, it was shown in \cite{Feigin:1994in,Frenkel:2004qy} that
the spectrum of the quantum Gaudin Hamiltonians can be encoded by
objects of an entirely different nature; namely, certain ordinary
differential operators on $\P^1$ called {\em opers}. (This concept was
introduced by Beilinson and Drinfeld \cite{Beilinson:2005} and goes
back to the work of Drinfeld and Sokolov on generalized KdV
systems \cite{Drinfeld:1985}). The opers that encode the spectrum have
regular singularities at the points $z_1,\ldots,z_N$ and $\infty$, and
trivial monodromy.

Remarkably, the opers that encode the spectra of the Gaudin model
associated to $\g$ are naturally associated to the {\em Langlands
  dual} Lie algebra $^L\g$ rather than $\g$ itself. This is no
accident; as explained in \cite{Frenkel:1995zp}, this correspondence
may be viewed as a special case of the construction \cite{BD:Hitchin}
of the geometric Langlands correspondence.

As a bonus, we obtain (at least, in a generic situation) explicit
formulas for the eigenvectors and eigenvalues of the Gaudin
Hamiltonians \cite{Feigin:1994in}, which are analogous to Bethe's
original formulas. An interesting aspect is that the $^L\g$-opers
corresponding to these eigenvalues can be expressed in terms of the
Miura transformation well-known in the theory of generalized KdV
equations \cite{Feigin:1994in,Frenkel:2004qy}. This means that we may
alternatively encode the eigenvalues by so-called Miura opers. The
same is true if we replace finite-dimensional representations
$V_{\la_i}$ by other highest weight $\g$-modules. (For more general
$\g$-modules, the spectral problem becomes more complicated; a
strategy for describing the relevant $^L\g$-opers and the
corresponding Bethe Ansatz equations using the data of 4D gauge
theories was proposed in \cite{Nekrasov:2010ka,Nekrasov:2011bc}.)

Accordingly, we obtain a link between two worlds that at first glance
seem to be far apart: the quantum world of integrable models and the
classical world of geometry and differential equations. The mystery is
partially resolved when we realize that this link can be deformed in
such a way that both sides correspond to quantum field theories.
Namely, the spectral problem of the Gaudin model deforms to the KZ
equations on conformal blocks in a WZW model associated to $\g$ (see
\cite{Reshetikhin:1994qw}), whereas opers deform to conformal blocks
of the ${\mc W}$-algebra corresponding to $^L\g$.

Thus, the original Gaudin/opers correspondence can be recovered in a
special limit of a more familiar duality of QFTs. In this limit, the
symmetry algebra on one side remains quantum but develops a large
commutative subalgebra (in fact, it can be obtained from the center of
$\wt{U}(\ghat)$ at the critical level
\cite{Feigin:1994in,Frenkel:2004qy} so that the quadratic Hamiltonians
correspond to the limit of the rescaled stress tensor given by the
Segal-Sugawara formula). In contrast, the limit of the symmetry
algebra on the other side is purely classical (opers and related
structures); thus, we get a commutative algebra with a Poisson
structure. This Poisson structure is a hint of the existence of
a one-parameter deformation, as is the appearance of the Miura
transformation (which is known to preserve Poisson structures) in the
formula for the eigenvalues of quantum Hamiltonians, when they are
expressed as opers.

\subsection{Quantum KdV systems}    \label{ODEIM}

The above construction has various generalizations. One possibility,
discussed in \cite{Zeitlin:2013iya}, is to replace the Lie algebra
$\g$ by a Lie superalgebra. Another possibility is to replace the
finite-dimensional simple Lie algebra $\g$ by the corresponding affine
Kac-Moody algebra $\ghat$. These affine Gaudin models were introduced
in \cite{FF:kdv}, where it was argued that the spectra of the
corresponding quantum Hamiltonians (in this case, there are both local
and non-local Hamiltonians) should be encoded by affine analogues of
$^L\g$-opers, called $^L\ghat$-opers.  Here, $^L\ghat$ is the affine
algebra that is Langlands dual to $\ghat$. In particular, $^L\ghat$ is
a twisted affine Kac-Moody algebra if $\ghat$ is an untwisted affine
Lie algebra associated to a non-simply laced $\g$, so this duality is
more subtle than in the finite-dimensional case.

It was shown in \cite{FF:kdv} that with particular choices of the
available parameters, the affine Gaudin model associated to $\ghat$
can be viewed as a quantization of the classical $\ghat$-KdV system
defined in \cite{Drinfeld:1985}. The conjecture of \cite{FF:kdv} (see
also \cite{Frenkel:ac}) states that the spectra of the corresponding
quantum $\ghat$-Hamiltonians can be encoded by $^L\ghat$-opers on
$\P^1$ with particular analytic properties. In the case of
$\ghat=\wh\sl_2$ (so that we are dealing with the KdV system proper),
these opers can be written as second order ordinary differential
operators with spectral parameter on $\P^1$. The conjecture of
\cite{FF:kdv} then becomes the conjecture made earlier by Bazhanov,
Lukyanov, and Zamolodchikov \cite{BLZ}, which was in fact a motivation
for \cite{FF:kdv}. (A similar conjecture was also made in the case of
$\ghat=\wh\sl_3$ in \cite{Bazhanov:2001xm}.)

For general $\g$, $^L\ghat$-opers are Lie algebra-valued ordinary
differential operators on $\P^1$ (or a finite cover, if $\g$ is
non-simply laced \cite{Frenkel:ac}) with spectral parameter.
Therefore the conjecture of \cite{FF:kdv,Frenkel:ac} fits the general
paradigm of the {\em ODE/IM correspondence} (see
e.g. \cite{Dorey:2007zx}), i.e. a correspondence between spectra of
quantum Hamiltonians in a quantum integrable model (IM) and ordinary
differential operators (ODE).

In \cite{Masoero_2016,Masoero_2016_SL}, Masoero, Raimondo, and Valeri
made an important discovery. They analyzed the $^L\ghat$-affine opers
that according to \cite{FF:kdv} should encode the eigenvalues on the
ground eigenstates of the $\ghat$-KdV system. They were able to assign
to each of them a collection of functions $\{ Q_i(z),\wt{Q}_i(z)
\}_{i=1,\ldots,r}$ solving a novel system of equations (dubbed the
$Q\wt{Q}$-system in \cite{Frenkel:ac}) generalizing the quantum
Wronskian relation found in \cite{BLZ}. Furthermore, they showed that
in a generic situation, these equations imply that the roots of the
functions $Q_i(z)$ satisfy a version of the system of Bethe Ansatz
equations proposed earlier in
\cite{OGIEVETSKY1986360,RW,Reshetikhin:1987}. These results were
generalized in \cite{Masoero:2018rel} to the affine opers
corresponding to the excited eigenstates of the $\ghat$-KdV system for
simply laced $\g$. Thus, we obtain that the Bethe Ansatz equations
describe objects on the ODE side of the ODE/IM correspondence. (This is
analogous to the interpretation of the Bethe Ansatz equations of the
$\g$-Gaudin model in terms of the corresponding $^L\g$-opers, see
Section \ref{gaudin}.)

To see that the same Bethe Ansatz equations also appear on the IM side,
we need to interpret the $Q\wt{Q}$-system directly in terms of the
quantum Hamiltonians. This was done in \cite{Frenkel:ac}, where it was
shown that the $Q\wt{Q}$-system arises from universal relations
between the quantum $\ghat$-KdV Hamiltonians corresponding to certain
specific transfer-matrices of $U_q\ghat$ (or rather its Borel
subalgebra).

Thus, we obtain an analogue of the Gaudin/oper correspondence of
Section \ref{gaudin}: the ODE/IM correspondence between the spectra of
the quantum Hamiltonians of the quantum KdV system and classical
geometric objects; namely, affine opers. The two sides of the ODE/IM
correspondence share the $Q\wt{Q}$-system and the Bethe Ansatz
equations (see \cite{Frenkel:ac} and Section \ref{prior} for more
details).

\subsection{Quantum spin chains}

The goal of the present paper is to express in the same spirit the
spectra of the quantum integrable models of the type considered by
Bethe in his original work \cite{Bethe:1931hc}.

In fact, it is known that the XXX spin chain naturally corresponds to the
Yangian of $\sl_2$, and this model can be generalized so that the
symmetry algebra is the Yangian of an arbitrary simple Lie algebra
$\g$. In this paper, we focus on the trigonometric versions of these
models. In the simplest case of $\g=\sl_2$, this is the Heisenberg XXZ
model, in which the symmetry algebra is the quantum affine algebra
$U_q \wh\sl_2$, where $q$ corresponds to the parameter $\Delta$ of the
XXZ model by the formula $\Delta=(q+q^{-1})/2$.  (The XXX model can be
obtained from the XXZ model, if we take the limit $q \rightarrow 1$ in
a particular fashion.) This model can be generalized to an arbitrary
$\g$, so that the symmetry algebra is the quantum affine algebra
$U_q\ghat$. We will refer to these models as {\em $U_q\ghat$ XXZ-type
  models}.

Analogues of the Bethe Ansatz equations for these models were proposed
in the 1980's \cite{OGIEVETSKY1986360,RW,Reshetikhin:1987}. In the
1990's, Reshetikhin and one of the authors \cite{Frenkel:ls} explained
how to relate them to the spectra of the transfer-matrices associated
to finite-dimensional representations of $U_q\ghat$, which are quantum
Hamiltonians of this model. This derivation was based on a conjectural
formula (generalizing Baxter's $TQ$-relation in the case of
$\g=\sl_2$) for the eigenvalues of these transfer-matrices in terms of
the $q$-characters of the corresponding representations of $U_q\ghat$.
This conjecture was proved by Hernandez and one of the authors in
\cite{Frenkel:2013uda}.

Thus, as in the previous two types of quantum integrable systems, we
obtain a description of the spectra of quantum Hamiltonians in terms
of solutions of a system of Bethe Ansatz equations.

The question then is: {\em How to construct the geometric objects
  encoding the solutions of these Bethe Ansatz equations?} If we
answer this question, we obtain another instance of the duality
discussed above.

The first step in this direction was made by Mukhin and Varchenko
\cite{Mukhin_2005} in the closely related case of the XXX-type models
corresponding to Yangians rather than quantum affine algebras. To each
solution of the corresponding system of Bethe Ansatz equations, they
assigned a difference operator that they called a ``discrete Miura
oper.'' However, since they did not give an independent definition of
such objects, this does not give a duality correspondence of the kind
discussed above.

In \cite{KSZ}, three of the authors proposed such a definition in the
case of the $U_q\wh\sl_n$ XXZ-type model. Namely, they introduced
$(\SL(n),q)$-opers (as scalar $q$-difference operators of order $n$)
and related them to nondegenerate solutions of the corresponding Bethe
Ansatz equations.  

In this paper, we elucidate the results of
\cite{KSZ} and generalize them to other $U_q\ghat$ XXZ-type models.

\subsection{$q$-opers and Miura $q$-opers}

We start in Section \ref{Sec:DefinitionsqOpers} with an intrinsic,
coor\-dinate-independent definition of the geometric objects dual to
the spectra of the $U_q\ghat$ XXZ-type models. These are the $q$-{\em
  opers} and {\em Miura $q$-opers} associated to an arbitrary simple
simply-connected complex Lie group $G$. (Our definition can be
generalized in a straightforward fashion to an arbitrary reductive
$G$.) They are analogues of ordinary opers and Miura opers.

Unlike opers (or more general connections on principle bundles), which
can be defined over an arbitrary complex algebraic curve $X$, the
definition of a $q$-connection (with $q$ not a root of unity) requires
that $X$ carry an automorphism of infinite order. In this paper, we
focus on the case $X=\P^1$, with the automorphism being $z \mapsto qz$.
However, a similar definition can also be given in the additive case
(a translation of the affine line, which can be extended to $\P^1$) or
the elliptic case, where $X$ is an elliptic curve and we choose a
translation by a generic element of the abelian group of its
points. Most of our results can be generalized to these two cases.

Having introduced the notion of a $q$-connection on a principal
$G$-bundle on the projective line $\P^1$, we define a $(G,q)$-oper as
a triple consisting of a principal $G$-bundle $\cF_G$ on $\P^1$
together with a reduction $\cF_{B_-}$ of $\cF_G$ to a Borel subgroup
$B_-$ and
a $q$-connection satisfying a certain condition with respect to
$\cF_{B_-}$, which is analogous to the oper condition. In defining the
$q$-oper condition, we follow the definition of $q$-Drinfeld-Sokolov
reduction given in \cite{Frenkel1998,1998CMaPh.192..631S}, which
involves the Bruhat cell of a Coxeter element of the Weyl group of
$G$.

We then define a Miura $(G,q)$-oper by analogy with the differential
case (see \cite{Frenkel:2003qx,Frenkel:2004qy}) as a $(G,q)$-oper with
an additional structure: a reduction $\cF_{B_+}$ of $\cF_G$ to an
opposite Borel subgroup $B_+$ that is preserved by the oper
$q$-connection. We prove a fundamental property of the two reductions
$\cF_{B_-}$ and $\cF_{B_+}$: they are necessarily in generic relative
position on a dense Zariski open subset of $\P^1$. This is analogous
to the situation in the differential case
\cite{Frenkel:2003qx,Frenkel:2004qy}.

\subsection{$q$DE/IM correspondence}

Consider now the $U_q\ghat$ XXZ-type model, where $\g$ is a simple Lie
algebra. Assume that the space of states on which the quantum
Hamiltonians (the transfer-matrices of $U_q\ghat$) act is
finite-dimensional. In the case of the Gaudin model, we saw in Section \ref{gaudin}
that the opers encoding the spectra in the finite-dimensional case
have trivial monodromy, i.e. are gauge equivalent to the trivial
connection. The XXZ-type models that we consider in this paper are
slightly more general, in that we include a non-trivial twist of the
boundary conditions, which is represented by a regular semisimple
element $Z$ of the Lie group $G$.  (In the case of the Gaudin model,
this corresponds to allowing opers on $\P^1$ with an irregular
singularity at $\infty$~\cite{Feigin:2006xs}; a similar twist can
also be included in XXX-type models.) Because of this twist, we
impose the condition that our $q$-opers are $q$-gauge equivalent to
the constant $q$-connection equal to $Z$. We call such $q$-opers, and
the corresponding Miura $q$-opers, $Z$-{\em twisted}.

We further introduce certain intermediate objects, which we call {\em
  $Z$-twisted Miura-Pl\"ucker $q$-opers} on $\P^1$. The main theorem
of this paper establishes a one-to-one correspondence between the set
of these objects satisfying an explicit nondegeneracy condition and
the set of nondegenerate solutions of a system of Bethe Ansatz
equations.

This is proved by constructing a bijection between each of the above
two sets and the set of nondegenerate solutions of a system of
algebraic equations. Remarkably, for simply laced $\g$, this system is
a version of the $Q\wt{Q}$-system discussed in Section \ref{ODEIM} in
the context of the quantum $\ghat$-KdV systems. Here, we refer to it
as the {\em $QQ$-system}. Following the results of
\cite{Frenkel:2013uda,Frenkel:ac}, we expect that this system of
equations is satisfied by a certain family of transfer-matrices of the
$U_q\ghat$ XXZ-type model (see Section \ref{prior} for more details).
Accordingly, we can link the spectra of these transfer-matrices and
$q$-opers.

Thus, for simply laced $\g$ we fulfill our goal and obtain a dual
description of the spectra of quantum Hamiltonians of the $U_q\ghat$
XXZ-type model in terms of geometric objects; namely, $Z$-twisted
Miura-Pl\"ucker $(G,q)$-opers on $\P^1$, where $Z$ corresponds to the
twist in the boundary condition of the XXZ-type model.

By analogy with the ODE/IM correspondence discussed in Section
\ref{ODEIM}, we call it the {\em $q$DE/IM correspondence}.

If $\g$ is non-simply laced, this $q$DE/IM correspondence becomes more
subtle. In fact, the $QQ$-system and the system of Bethe Ansatz
equations we obtain in this case are different from the systems that
arise from the $U_q\ghat$ XXZ-type model. (This was already noted in
\cite{Mukhin_2005} in the case of XXX-type models.) In light
of the results and conjectures of \cite{FR:w,Frenkel:ac}, we expect
that they arise from a novel quantum integrable model associated
to the {\em twisted} quantum affine algebra $U_q{}^L\ghat$, where
$^L\ghat$ is the affine Kac-Moody algebra Langlands dual to
$\ghat$. This will be explained in \cite{FrenkelHern:new}.

In summary, we expect that {\em the $q$DE/IM correspondence links
  $Z$-twisted Miura-Pl\"ucker $(G,q)$-opers on $\P^1$ and the spectra
  of quantum Hamiltonians in a quantum integrable model associated to
  $U_q{}^L\ghat$.}

\subsection{Quantum $q$-Langlands Correspondence}

In the case of the Gaudin model, both sides of the duality between
spectra and opers can be deformed to QFTs, as we discussed in Section
\ref{gaudin}. It turns out that such a deformation also exists in the
case of the $q$DE/IM correspondence. It was proposed in
\cite{Aganagic:2017smx} under the name ``quantum $q$-Langlands
correspondence.'' Specifically, it was shown in
\cite{Aganagic:2017smx} that for simply laced $\g$, one can identify
solutions of the $q$KZ equations, which can be viewed as deformed
conformal blocks for the quantum affine algebra, and deformed
conformal blocks for the deformed ${\mc W}$-algebra
$\mathcal{W}_{q,t}(\mathfrak{g})$. (The notation is a bit misleading
because what we previously denoted by $q$ is now the $t$ of
$\mathcal{W}_{q,t}(\mathfrak{g})$, and the $q$ of
$\mathcal{W}_{q,t}(\mathfrak{g})$ is the product of $t$ and the
parameter of the quantum affine algebra on the other side; thus, we
obtain an XXZ-type model in the limit in which this $q$ goes to $1$
and so we should consider $t$-opers rather than $q$-opers.)

As argued in \cite{Aganagic:2017smx}, this quantum duality has its
origins in the duality of the little string theory with defects on an
ADE-type surface. The defects wrap compact and noncompact cycles of
the internal Calabi-Yau manifold, thereby producing screening and
vertex operators respectively of the deformed conformal blocks of
$\mathcal{W}_{q,t}(\mathfrak{g})$.  On the other hand, little string
theory, in the limit where it becomes a conformal $(0,2)$ theory in
six dimensions, localizes on the defects, yielding quiver gauge
theories whose Higgs branches are Nakajima quiver varieties. Using
powerful methods of enumerative equivariant K-theory
\cite{Okounkov:2015aa}, the authors of \cite{Aganagic:2017smx} were
able to express these deformed conformal blocks in terms of solutions
of the $q$KZ equations.

The quantum $q$-Langlands correspondence for the QFTs associated to
non-simply laced Lie algebras is more subtle. In
\cite{Aganagic:2017smx}, it is studied by introducing an $H$-twist to
the little string theory which, together with the corresponding outer automorphism of the ADE
Dynkin diagram, acts on the complex line supporting the
defects. (A similar construction was considered in
\cite{Dey:2016qqp} and \cite{Kimura:2017hez}.)

In the critical level limit, solutions of the $q$KZ equations
corresponding to $U_q\ghat$ give rise to eigenvectors of the
$U_q\ghat$ XXZ-type model. The results of the present paper suggest
that for non-simply laced $\g$, the limits of the correlation
functions on the other side should give rise {\em not} to $q$-opers
associated to $^L G$, as one might expect by analogy with the duality
in the case of the Gaudin model, but rather to some twisted $q$-opers
associated to a twisted affine Kac-Moody group. These can probably be
defined similarly to the definition of twisted opers in \cite{MR2600880}
and twisted affine opers in \cite{Frenkel:ac}.

The appearance of the Langlands dual affine Lie algebra in this
duality can also be seen from the Poisson structure on the space of
$(G,q)$-opers which was defined in
\cite{Frenkel1998,1998CMaPh.192..631S} using $q$-Drinfeld-Sokolov
reduction. (This is, of course, a hint at the existence of the quantum
duality which is analogous to the classical duality involving the
Gaudin models, see Section \ref{gaudin}.)
The results and conjectures of \cite{FR:w} (see Conjecture 3, Section
6.3, and Appendix B) show that if $G$ is non-simply laced, then this
Poisson algebra (with $q$ replaced by $t^{r^\vee}$, $r^\vee$ being the
lacing number of $\g$) is isomorphic to the limit of ${\mc
  W}_{q,t}(\g)$ as $q \rightarrow 1$, which is in turn related to the
center of $U_t\wh{\g}^\vee$ at the critical level and to the algebra
of transfer-matrices of $U_t \wh{\g}^\vee$, where $\wh{\g}^\vee =
{}^L(\wh{^L\g})$. On the other hand, Conjecture 4 of \cite{FR:w}
states that the limit of ${\mc W}_{q,t}(\g)$ as $q \rightarrow e^{\pi
  i/r^\vee}$, is related to the center of $U_t{}^L\wh{\g}$ at the
critical level and to the algebra of transfer-matrices of $U_t
{}^L\wh{\g}$. Both of these limits and their potential connections to
the duality of \cite{Aganagic:2017smx} in the non-simply laced case
certainly deserve investigation. This will be further discussed in
\cite{FrenkelHern:new}.

\subsection{A brane construction}

Finally, we want to mention another approach to the $q$DE/IM
correspondence. In the case of the Gaudin model discussed in Section
\ref{gaudin}, it was shown in \cite{Nekrasov:2010ka,Gaiotto:2011nm}
that the spectra of the Gaudin Hamiltonians can be identified with the
intersection of the brane of opers and another brane in the
corresponding Hitchin moduli space of Higgs bundles. Recent work of
Elliott and Pestun \cite{Elliott:2018yqm} suggests that there is a
$q$-deformation of this construction, in which one should consider a
multiplicative version of Higgs bundles and a brane of $q$-opers. In
the case of finite-dimensional representations of quantum affine
algebras, this idea is supported by the fact, originally observed in
\cite{FR:w,Frenkel:ls} (see also Section
\ref{Sec:DSReduction} below), which is discussed in
\cite{Elliott:2018yqm}, that the $q$-character homomorphism and the
corresponding generalized Baxter $TQ$-relations can be interpreted in
terms of a $q$-deformation of the Miura transformation.

It is possible that this construction can also be generalized to
infinite-dimensional representations of quantum affine algebras along
the lines of \cite{Nekrasov:2011bc}, with the brane of opers replaced
by a brane of $q$-opers, which can now be rigorously defined using the
results of the present paper.

\subsection{Plan of the Paper}

The paper is organized as follows. In \Secref{Sec:DefinitionsqOpers},
we give the definition of meromorphic $q$-opers and Miura $q$-opers.
(We consider the case of the curve $\P^1$ but the same definition can
be given for any curve equipped with an automorphism of infinite
order.) We prove Theorem \ref{gen rel pos} about the relative position
of the two Borel reductions of a $q$-Miura oper. In Section
\ref{Sec:Ztw}, we define $Z$-twisted $(G,q)$-opers with regular
singularities on $\mathbb{P}^1$ as well as the corresponding Miura
$(G,q)$-opers and Cartan $q$-connections. In Section
\ref{Sec:nondegMiura}, we define Miura-Pl\"ucker $q$-opers and
introduce a nondegeneracy condition for them.

In \Secref{Sec:SL2review}, we consider in detail
the case $G=\SL(2)$ and show, elucidating the results of \cite{KSZ},
that the nondegenerate $Z$-twisted Miura $q$-opers (which are the same
as Miura-Pl\"ucker $q$-opers in this case) are in bijection with
nondegenerate solutions of the $QQ$-system and Bethe Ansatz equations
(Theorems \ref{isom sl2} and \ref{BASL2}). In \Secref{Sec:QQsystem},
we generalize these results to an arbitrary simply connected simple complex
Lie group $G$ (Theorems \ref{inj} and \ref{BAE}).

In \Secref{Sec:Backlund}, we define B\"acklund-type transformations on
Miura-Pl\"ucker $q$-opers, following the construction of similar
transformations in the Yangian case given in \cite{Mukhin_2005}. We
use these transformation to give a sufficient condition for a
Miura-Pl\"ucker $q$-oper to be a Miura $q$-oper. Finally, in
\Secref{Sec:DSReduction}, we discuss the relation between $(G,q)$-opers
and the $q$-Drinfeld-Sokolov reduction defined in
\cite{Frenkel1998,1998CMaPh.192..631S}. We construct a canonical
system of coordinates on the space of $(G,q)$-opers with regular
singularities. Following \cite{FR:w,Frenkel:ls}, we conjecture that if
$G$ is simply laced, then the formulas expressing these canonical
coordinates in terms of the polynomials $Q^i_+(z)$ coincide with the
generalized Baxter $TQ$-relations established in \cite{Frenkel:2013uda}.

\bigskip

\noindent{\bf Acknowledgments.} E.F. thanks D. Hernandez, N.
Nekrasov, and N. Reshetikhin for useful discussions. D.S.S. was
partially supported by NSF grant DMS-1503555 and a Simons
Collaboration Grant.  He also thanks Greg Muller for useful
discussions.  A.M.Z. is partially supported by
Simons Collaboration Grant, Award ID: 578501.

\section{$(G,q)$-opers with regular singularities}
\label{Sec:DefinitionsqOpers}

\subsection{Group-theoretic data}    \label{regsing} 

Let $G$ be a connected, simply connected, simple algebraic group of
rank $r$ over $\mathbb{C}$.  We fix a Borel subgroup $B_-$ with
unipotent radical $N_-=[B_-,B_-]$ and a maximal torus $H\subset B_-$.
Let $B_+$ be the opposite Borel subgroup containing $H$.  Let $\{
\alpha_1,\dots,\alpha_r \}$ be the set of positive simple roots for
the pair $H\subset B_+$.  Let $\{ \check\alpha_1,\dots,\check\alpha_r
\}$ be the corresponding coroots; the elements of the Cartan matrix of
the Lie algebra $\fg$ of $G$ are given by $a_{ij}=\langle
\alpha_j,\check{\alpha}_i\rangle$. The Lie algebra $\fg$ has Chevalley
generators $\{e_i, f_i, \check{\alpha}_i\}_{i=1, \dots, r}$, so that
$\fb_-=\Lie(B_-)$ is generated by the $f_i$'s and the
$\check{\alpha}_i$'s and $\fb_+=\Lie(B_+)$ is generated by the $e_i$'s
and the $\check{\alpha}_i$'s.  Let $\omega_1,\dots\omega_r$ be the
fundamental weights, defined by $\langle \omega_i,
\check{\alpha}_j\rangle=\delta_{ij}$.

Let $W_G=N(H)/H$ be the Weyl group of $G$. Let $w_i\in W$, $(i=1,
\dots, r)$ denote the simple reflection corresponding to
$\alpha_i$. We also denote by $w_0$ be the longest element of $W$, so
that $B_+=w_0(B_-)$.  Recall that a Coxeter element of $W$ is a
product of all simple reflections in a particular order. It is known
that the set of all Coxeter elements forms a single conjugacy class in
$W_G$. We will fix once and for all (unless otherwise specified) a
particular ordering $(\alpha_{i_1},\ldots,\alpha_{i_r})$ of the simple
roots. Let $c=w_{i_1}\dots w_{i_r}$ be the Coxeter element associated
to this ordering. In what follows (unless otherwise specified), all
products over $i \in \{ 1, \dots, r \}$ will be taken in this order;
thus, for example, we write $c=\prod_i w_i$.  We also fix
representatives $s_i\in N(H)$ of $w_i$. In particular, $s=\prod_i s_i$
will be a representative of $c$ in $N(H)$.

Although we have defined the Coxeter element $c$ using $H$ and $B_-$,
it is in fact the case that the Bruhat cell $BcB$ makes sense for any
Borel subgroup $B$.  Indeed, let $(\Phi,\Delta)$ be the root system
associated to $G$, where $\Delta$ is the set of simple roots as
above and $\Phi$ is the set of all roots.  These data give a
realization of the Weyl group of $G$ as a Coxeter group, i.e., a pair
$(W_G,S)$, where $S$ is the set of Coxeter generators $w_i$ of $W_G$
associated to elements of $\Delta$. Now, given any Borel subgroup $B$,
set $\fb = \on{Lie}(B)$. Then the dual of the vector space
$\fb/[\fb,\fb]$ comes equipped with a set of roots and
simple roots, and this pair is canonically isomorphic to the root
system $(\Phi,\Delta)$~\cite[\S 3.1.22]{CG}.  The definition of the
sets of roots and simple roots on this space involves a choice of
maximal torus $T\subset B$, but these sets turn out to be independent
of the choice. Accordingly, the group $N(T)/T$ together with the set
of its Coxeter generators corresponding to these simple roots is
isomorphic to $(W_G,S)$ as a Coxeter group. Under this isomorphism,
$w\in W_G$ corresponds to an element of $N(T)/T$ by the following
rule: we write $w$ as a word in the Coxeter generators of $W_G$
corresponding to elements of $S$ and then replace each Coxeter
generator in it by the corresponding Coxeter generator of
$N(T)/T$. Accordingly, the Bruhat cell $BwB$ is well-defined for any
$w\in W_G$.

\subsection{Meromorphic $q$-opers}

The definitions given below can be given for an arbitrary algebraic
curve equipped with an automorphism of infinite order. For the sake of
definitiveness, we will focus here on the case of the curve $\P^1$ and
its automorphism $M_q: \P^1 \to \P^1$ sending $z \mapsto qz$, where
$q\in\C^\times$ is {\em not} a root of unity.

Given a principal $G$-bundle $\cF_G$ over $\P^1$ (in the Zariski
topology), let $\cF_G^q$ denote its pullback under the map $M_q: \P^1
\to \P^1$ sending $z\mapsto qz$. A meromorphic $(G,q)$-{\em
  connection} on a principal $G$-bundle $\cF_G$ on $\P^1$ is a section
$A$ of $\Hom_{\cO_{U}}(\cF_G,\cF_G^q)$, where $U$ is a Zariski open
dense subset of $\P^1$. We can always choose $U$ so that the
restriction $\cF_G|_U$ of $\cF_G$ to $U$ is isomorphic to the trivial
$G$-bundle. Choosing such an isomorphism, i.e. a trivialization of
$\cF_G|_U$, we also obtain a trivialization of
$\cF_G|_{M_q^{-1}(U)}$. Using these trivializations, the restriction
of $A$ to the Zariski open dense subset $U \cap M_q^{-1}(U)$ can be
written as a section of the trivial $G$-bundle on $U \cap M_q^{-1}(U)$,
and hence as an element $A(z)$ of $G(z)$.\footnote{Throughout the
  paper, if $K$ is a complex algebraic group, we set $K(z)=K(\C(z))$.}
Changing the trivialization of $\cF_G|_U$ via $g(z) \in G(z)$ changes
$A(z)$ by the following $q$-{\em gauge transformation}:
\begin{equation}    \label{gauge tr}
A(z)\mapsto g(qz)A(z)g(z)^{-1}.
\end{equation}
This shows that the set of equivalence classes of pairs $(\cF_G,A)$ as
above is in bijection with the quotient of $G(z)$ by the $q$-gauge
transformations \eqref{gauge tr}.

Following \cite{Frenkel1998,1998CMaPh.192..631S}, we define a $(G,q)$-oper as a
$(G,q)$-connection on a $G$-bundle on $\P^1$ equipped with a reduction
to the Borel subgroup $B_-$ that is not preserved by the
$(G,q)$-connection but instead satisfies a special ``transversality
condition'' which is defined in terms of the Bruhat cell associated to
the Coxeter element $c$. Here is the precise definition.

\begin{Def}    \label{qop}
  A meromorphic $(G,q)$-{\em oper} (or simply a $q$-{\em oper}) on
  $\mathbb{P}^1$ is a triple $(\cF_G,A,\cF_{B_-})$, where $A$ is a
  meromorphic $(G,q)$-connection on a $G$-bundle $\cF_G$ on
  $\mathbb{P}^1$ and $\mathcal{F}_{B_-}$ is a reduction of $\cF_G$
  to $B_-$ satisfying the following condition: there exists a Zariski
  open dense subset $U \subset \P^1$ together with a trivialization
  $\imath_{B_-}$ of $\mathcal{F}_{B_-}$ such that the restriction of
  the connection $A: \cF_G \to \cF_G^q$ to $U \cap M_q^{-1}(U)$,
  written as an element of $G(z)$ using the trivializations of
  ${\mathcal F}_G$ and $\cF_G^q$ on $U \cap M_q^{-1}(U)$ induced by
  $\imath_{B_-}$, takes values in the Bruhat cell $B_-(\C[U \cap
  M_q^{-1}(U)]) c B_-(\C[U \cap M_q^{-1}(U)])$.
\end{Def}

Note that this property does not depend on the choice of
trivialization $\imath_{B_-}$.

Since $G$ is assumed to be simply connected, any $q$-oper connection
$A$ can be written (using a particular trivialization $\imath_{B_-}$)
in the form
\begin{equation}    \label{qop1}
A(z)=n'(z)\prod_i (\phi_i(z)^{\check{\alpha}_i} \, s_i )n(z),
\end{equation}
where $\phi_i(z) \in\C(z)$ and  $n(z), n'(z)\in N_-(z)$ are such that
their zeros and poles are outside the subset $U \cap M_q^{-1}(U)$ of
$\P^1$.

We remark that the choice of a particular Coxeter element $c$ in this
definition can be viewed as a choice of a particular gauge, at least
for orderings differing by a cyclic permutation. Indeed, we will
see below in Proposition~\ref{prop:coxchoice} that the spaces of
$q$-opers we consider for such a pair of Coxeter elements are
isomorphic under a specific $q$-gauge transformation.

\subsection{Miura $q$-opers}

We will also need a $q$-difference version of the notion of
differential Miura opers introduced in
\cite{Frenkel:2003qx,Frenkel:2004qy}. These are $q$-opers together
with an additional datum: a reduction of the underlying $G$-bundle to
the Borel subgroup $B_+$ (opposite to $B_-$) that is preserved by the
oper $q$-connection.

\begin{Def}    \label{Miura}
  A {\em Miura $(G,q)$-oper} on $\mathbb{P}^1$ is a quadruple
  $(\cF_G,A,\cF_{B_-},\cF_{B_+})$, where $(\cF_G,A,\cF_{B_-})$ is a
  meromorphic $(G,q)$-oper on $\P^1$ and $\cF_{B_+}$ is a reduction of
  the $G$-bundle $\cF_G$ to $B_+$ that is preserved by the
  $q$-connection $A$.
\end{Def}

Forgetting $\cF_{B_+}$, we associate a $(G,q)$-oper to a given Miura
$(G,q)$-oper. We will refer to it as the $(G,q)$-oper underlying the
Miura $(G,q)$-oper.

The following result is an analogue of a statement about differential
Miura opers proved in \cite{Frenkel:2003qx,Frenkel:2004qy}.

Suppose we are given a principal $G$-bundle $\cF_G$ on any smooth
complex manifold $X$ equipped with reductions $\cF_{B_-}$ and
$\cF_{B_+}$ to $B_-$ and $B_+$ respectively. We then assign to any
point $x \in X$ an element of the Weyl group $W_G$. To see this, first
note that the fiber
$\cF_{G,x}$ of $\cF_G$ at $x$ is a $G$-torsor with reductions
$\cF_{B_-,x}$ and $\cF_{B_+,x}$ to $B_-$ and $B_+$
respectively. Choose any trivialization of $\cF_{G,x}$, i.e. an
isomorphism of $G$-torsors $\cF_{G,x} \simeq G$. Under this
isomorphism, $\cF_{B_-,x}$ gets identified with $aB_- \subset G$ and
$\cF_{B_+,x}$ with $bB_+$. Then, $a^{-1}b$ is a well-defined element of
the double quotient $B_-\backslash G/B_+$, which is in bijection with
$W_G$. Hence, we obtain a well-defined element of $W_G$.

We will say that $\cF_{B_-}$ and $\cF_{B_+}$ have  {\em generic
  relative position} at $x \in X$ if the element of $W_G$ assigned to
them at $x$ is equal to $1$. This means that the corresponding element
$a^{-1}b$ belongs to the open dense Bruhat cell $B_-B_+ \subset
G$.

\begin{Thm}    \label{gen rel pos}
  For any Miura $(G,q)$-oper on $\mathbb{P}^1$, there exists an open
  dense subset $V \subset \P^1$ such that the reductions $\cF_{B_-}$
  and $\cF_{B_+}$ are in generic relative position for all $x \in V$.
\end{Thm}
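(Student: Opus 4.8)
The plan is to reduce the statement to a purely combinatorial fact about the Weyl group, by tracking how the oper $q$-connection moves the reduction $\cF_{B_+}$ relative to $\cF_{B_-}$. First I would fix a trivialization $\imath_{B_-}$ adapted to $\cF_{B_-}$ on a dense open set; there the connection becomes the element $A(z)\in B_-cB_-$ of \eqref{qop1}, and the reduction $\cF_{B_+}$ is recorded by a rational map $z\mapsto b(z)B_+\in G/B_+$. The condition that $A$ preserves $\cF_{B_+}$ then reads $b(qz)^{-1}A(z)b(z)\in B_+$, and the relative position of $\cF_{B_-},\cF_{B_+}$ at $z$ is the class of $b(z)$ in $B_-\backslash G/B_+\cong W_G$, generic position meaning $b(z)\in B_-B_+$. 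Passing to the moving Borel $\mathfrak{b}(z):=b(z)B_+b(z)^{-1}$, the preservation condition becomes $\mathfrak{b}(qz)=A(z)\,\mathfrak{b}(z)\,A(z)^{-1}$, and generic relative position at $z$ is equivalent to $\mathfrak{b}(z)$ being opposite to $B_-$, i.e.\ to the relative position of the ordered pair $(B_-,\mathfrak{b}(z))$ being the longest element $w_0$.

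The relative position $z\mapsto p(z)\in W_G$ of $(B_-,\mathfrak{b}(z))$ is a constructible function on the irreducible curve $\P^1$, hence equals a single value $p$ on a dense open $V_0$; replacing $V_0$ by $V_0\cap M_q^{-1}(V_0)$ I may assume $p(z)=p(qz)=p$. Now I would exploit the oper condition: writing $A(z)=\beta\,\dot c\,\beta'$ with $\beta,\beta'\in B_-$, the Borel $B'':=A(z)B_-A(z)^{-1}=\beta\,c(B_-)\,\beta^{-1}$ is a $B_-$-conjugate of $c(B_-)$, so $(B_-,B'')$ is in relative position exactly $c$. Since relative position of a pair of Borels is conjugation invariant, $\mathfrak{b}(qz)=A(z)\mathfrak{b}(z)A(z)^{-1}$ is in position $p$ from $B''$, while being in position $p$ from $B_-$. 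The standard triangle constraint for three Borels (if $(B_1,B_2)$ has position $u$ and $(B_2,B_3)$ has position $v$, then $(B_1,B_3)$ has position in $\Sigma(u,v):=\{x:B_-xB_-\subseteq B_-uB_-\cdot B_-vB_-\}$) then forces $p\in\Sigma(c,p)$.

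Everything comes down to the claim, which I expect to be the main obstacle: \emph{for a Coxeter element $c$ and any $p\in W_G$, one has $p\in\Sigma(c,p)$ if and only if $p=w_0$.} I would prove this by induction on the rank, writing $c=w_{i_1}c'$ with $c'=w_{i_2}\cdots w_{i_r}$ a Coxeter element of the parabolic $W_J$, $J=\{i_2,\dots,i_r\}$. Using the factorization $p=p_J\,p^J$ with $p^J$ the minimal left coset representative and lengths adding, one gets $\Sigma(c',p)=\{y\,p^J:y\in\Sigma_{W_J}(c',p_J)\}$, so that $p\in\Sigma(c',p)$ iff $p_J\in\Sigma_{W_J}(c',p_J)$, iff $p_J=w_0^J$ (the longest element of $W_J$) by induction; moreover $w_{i_1}p\notin\Sigma(c',p)$ since $w_{i_1}p_J\notin W_J$. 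Feeding this into the single–reflection rule for $B_-w_{i_1}B_-\cdot(\,\cdot\,)$ yields $p\in\Sigma(c,p)$ iff $p_J=w_0^J$ and $w_{i_1}$ is a left descent of $p$. The finish is clean: $p_J=w_0^J$ already makes every $w_j$ $(j\in J)$ a left descent of $p$, so together with $w_{i_1}$ all simple reflections are left descents, whence $p^{-1}$ sends every positive root to a negative root and $p=w_0$; the converse $w_0\in\Sigma(c,w_0)$ is immediate because each left multiplication by $B_-w_iB_-$ retains the cell of $w_0$.

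Having shown $p=w_0$, i.e.\ generic relative position on the dense open $V_0\cap M_q^{-1}(V_0)$, I would take $V$ to be the preimage of the open Bruhat cell $B_-B_+$ under $z\mapsto b(z)$: it is open, contains that dense subset, and is exactly the locus where $\cF_{B_-}$ and $\cF_{B_+}$ are in generic relative position. The only remaining points are routine and I would treat them briefly: the constructibility of $p(z)$ on $\P^1$, and the fact that on a genuine dense open subset the evaluation $A(z)$ is an honest element of the cell $B_-cB_-\subset G$, so that the triangle argument applies pointwise at each generic $z$.
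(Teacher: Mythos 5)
Your argument is correct, but it takes a genuinely different route from the paper's. The paper works entirely over the field $\C(z)$: it writes the change of trivialization between the $\cF_{B_-}$-adapted gauge and a $\cF_{B_+}$-adapted gauge as an element $g(z)\in G(z)$, invokes the Bruhat decomposition $G(z)=\bigsqcup_w B_+(z)wN_-(z)$, and rules out $w\neq 1$ by means of the explicit factorization of Theorem \ref{gen elt}: every element of $N_-\prod_i\phi_i^{\check\alpha_i}s_iN_-\cap B_+$ has the form $\prod_i g_i^{\check\alpha_i}e^{(\phi_i t_i/g_i)e_i}$ with all $g_i\neq 0$, so every simple-root exponential is ``switched on,'' and such an element can lie in $wN_+w^{-1}$ only for $w=1$. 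You instead argue pointwise at a generic $z$, recast everything in terms of relative positions of Borel subgroups, and reduce the theorem to the combinatorial lemma that $p\in\Sigma(c,p)$ forces $p=w_0$ for a Coxeter element $c$; your rank induction for that lemma is sound (the factorization $\Sigma(c',p)=\Sigma_{W_J}(c',p_J)\,p^J$ from $\ell(p)=\ell(p_J)+\ell(p^J)$, the exclusion of $w_{i_1}p$ from $W_Jp^J$, and the descent count all check out, and the lemma verifies directly in types $A_2$, $A_3$, $B_2$). Both proofs ultimately exploit the same feature of $c$ --- that its reduced word uses each simple reflection exactly once --- but they package it differently: the paper's route simultaneously establishes the explicit normal form \eqref{form of A} for Miura oper connections (via Theorem \ref{gen elt} and Corollary \ref{Miura form}), which is used constantly in the rest of the paper, whereas your route isolates a clean Weyl-group statement, avoids all matrix computations, and would apply verbatim to any $w$ in place of $c$ for which $w_0$ is the only solution of $B_-pB_-\subseteq B_-wB_-\cdot B_-pB_-$. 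The deferred points you list (constructibility of $p(z)$, lifting the $B_+$-reduction to a rational $b(z)\in G$, and evaluating $A(z)$ at generic points into the honest cell $B_-cB_-\subset G$) are indeed routine.
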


\begin{proof}
  Let $U$ be a Zariski open dense subset on $\P^1$ as in Definition
  \ref{qop}. Choosing a trivialization $\imath_{B_-}$ of $\cF_G$ on $U
  \cap M_q^{-1}(U)$, we can write the $q$-connection $A$ in the form
  \eqref{qop1}. On the other hand, using the $B_+$-reduction
  $\cF_{B_+}$, we can choose another trivialization of $\cF_G$ on $U
  \cap M_q^{-1}(U)$ such that the $q$-connection $A$ acquires the form
  $\wt{A}(z) \in B_+(z)$. Hence there exists $g(z) \in G(z)$ such that
\begin{equation}    \label{connecting}
g(zq) n'(z)\prod_i (\phi_i(z)^{\check{\alpha}_i} \, s_i )n(z)
g(z)^{-1} = \wt{A}(z) \in B_+(z).
\end{equation}
Recall the Bruhat decomposition (see \cite{Borel_1991}[Theorem
21.15]):
\begin{equation}    \label{Bruhat}
G(z) = \bigsqcup_{w \in W_G} B_+(z) w N_-(z).
\end{equation}
The statement of the proposition is equivalent to the statement that
$$
g(z) \in B_+(z) N_-(z)
$$
(corresponding to $w=1$), or equivalently, that $g(z)
\notin \; B_+(z) w N_-(z)$ for $w \neq 1$.

Suppose that this is not the case. Then $g(z) = b_+(z) w n_-(z)$ for
some $b_+(z) \in B_+(z), n_-(z) \in N_-(z)$, and $w \neq 1$. Setting
$\wt{n'}(z) = n_-(zq) n'(z)$ and $\wt{n}(z) = n(z) n_-(z)^{-1}$, we
can rewrite \eqref{connecting} as
\begin{equation}    \label{elt}
\wt{n'}(z) \prod_i (\phi_i(z)^{\check{\alpha}_i} \, s_i ) \wt{n}(z)
\in w B_+(z) w^{-1}.
\end{equation}

Now, the Borel subgroup decomposes as
$$
wB_{+}w^{-1}=H(N_-\cap wN_+ w^{-1})(N_+\cap
wN_+ w^{-1})
$$
because $wN_+w^{-1}=(N_-\cap wN_+ w^{-1})(N_+\cap wN_+
w^{-1})).$ Hence, denoting the element \eqref{elt} by $A$, we can
write
$$
A = h u_- u_+, \qquad h \in H, \quad u_- \in N_-\cap wN_+ w^{-1},
\quad u_+ \in N_+\cap wN_+ w^{-1}.
$$
It follows that $u_+\in B_-cB_-\cap N_+ \cap wN_+w^{-1}$. In
particular, $u_+ \in N_+ \cap wN^+w^{-1}$, which is the product of the
one-dimensional unipotent subgroups $X_\alpha$, where $\alpha$ runs
over the set of positive roots for which $w(\alpha)$ is positive.

On the other hand, according to Theorem \ref{gen elt}, every element
of $N_-\prod_i\phi_i(z)^{\check{\alpha}_i}s_iN_-\cap B_+$ can be
written in the form
$$
\prod_i g_i(z)^{\check{\alpha}_i}e^{\frac{\phi_i(z) t_i}{g_i(z)}e_i}, \qquad
g_i(z) \in \C(z)^\times, t_i \in \C^\times.
$$
Therefore, $u_+ = h'(z) \prod_i e^{a_i(z) e_i}$, where $h\in H$ and
$a_i(z) \neq 0$ for all $i=1,\ldots,r$. Such an element $u_+$ can
belong to $wN_+w^{-1}$ only if $w^{-1}$ maps all positive simple roots
to positive roots, i.e. if it preserves the set  of positive
roots. But this can only happen for $w=1$. This completes the proof.
\end{proof}

\begin{Cor}    \label{gen rel pos1}
For any Miura $(G,q)$-oper on $\mathbb{P}^1$, there exists a
trivialization of the underlying $G$-bundle $\cF_G$ on an open
dense subset of $\P^1$ for which the oper $q$-connection has the form
\begin{equation}    \label{genmiura}
A(z)\in N_-(z)\prod_i((\phi_i(z)^{\check{\alpha}_i}s_i
)N_-(z) \; \cap \; B_+(z).
\end{equation}
\end{Cor}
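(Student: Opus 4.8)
The plan is to derive this corollary directly from Theorem~\ref{gen rel pos}, reusing the two trivializations that appear in its proof. First I would use a trivialization $\imath_{B_-}$ adapted to $\cF_{B_-}$ to write the oper $q$-connection in the form \eqref{qop1}, namely $A(z) = n'(z)\prod_i (\phi_i(z)^{\check{\alpha}_i} s_i) n(z)$ with $n(z), n'(z) \in N_-(z)$. Since $\cF_{B_+}$ is preserved by $A$, there is a second trivialization in which the same connection takes the form $\wt{A}(z) \in B_+(z)$, and the two trivializations are related by some $g(z) \in G(z)$ as in \eqref{connecting}.

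The key input is that Theorem~\ref{gen rel pos} guarantees, on an open dense subset, that the two reductions are in generic relative position, which as recorded in its proof is equivalent to $g(z) \in B_+(z) N_-(z)$. I would therefore factor $g(z) = b_+(z) n_-(z)$ with $b_+(z) \in B_+(z)$ and $n_-(z) \in N_-(z)$, and then apply to $A(z)$ the $q$-gauge transformation \eqref{gauge tr} by the $N_-$-factor $n_-(z)$ alone. The point is that this transformation preserves the oper form: writing $\wt{n'}(z) = n_-(qz) n'(z)$ and $\wt{n}(z) = n(z) n_-(z)^{-1}$, both of which again lie in $N_-(z)$, the transformed connection becomes $A'(z) = \wt{n'}(z)\prod_i (\phi_i(z)^{\check{\alpha}_i} s_i) \wt{n}(z)$, still lying in $N_-(z)\prod_i(\phi_i(z)^{\check{\alpha}_i} s_i) N_-(z)$.

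It then remains to check that this same $A'(z)$ also lands in $B_+(z)$. Since $\wt{A}(z) = g(qz) A(z) g(z)^{-1} = b_+(qz) A'(z) b_+(z)^{-1}$, we obtain $A'(z) = b_+(qz)^{-1} \wt{A}(z) b_+(z)$; as both $\wt{A}(z)$ and $b_+$ lie in $B_+(z)$, so does $A'(z)$. Hence $A'(z)$ lies in the intersection \eqref{genmiura}, and the sought-after trivialization is $\imath_{B_-}$ modified by the gauge transformation $n_-(z)$. I expect the only delicate point to be the bookkeeping of domains: one must shrink to an open dense subset on which $n_-(z)$, its inverse, and $b_+(z)$ are all regular, so that the gauge transformation is well defined there, which is routine since every map in sight is rational. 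The real content is the factorization $g = b_+ n_-$ supplied by the Theorem, which lets one discard the $B_+$-part while the $N_-$-part keeps the oper form intact.
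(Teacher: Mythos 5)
Your proposal is correct and is essentially the paper's own proof: both rest on the factorization $g(z)=b_+(z)n_-(z)$ from Theorem~\ref{gen rel pos}, and your gauge transformation of the $\imath_{B_-}$-form by $n_-(z)$ produces exactly the same $q$-connection $b_+(qz)^{-1}\wt{A}(z)b_+(z)$ that the paper obtains by changing the $B_+$-adapted trivialization by $b_+(z)$. The two verifications (membership in $N_-(z)\prod_i(\phi_i(z)^{\check{\alpha}_i}s_i)N_-(z)$ and in $B_+(z)$) are identical.
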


\begin{proof}
In the course of the proof of Theorem \ref{gen rel pos}, we showed
that we can choose a trivialization of $\cF_G$ so that the oper
$q$-connection has the form
$$
\wt{A}(z) = g(zq) n'(z)\prod_i (\phi_i(z)^{\check{\alpha}_i} \, s_i )n(z)
g(z)^{-1},
$$
where $n(z), n'(z) \in N_-(z)$ and $g(z) = b_+(z)n_-(z)$, with $b_+(z)
\in B_+(z), n_-(z) \in N_-(z)$. Therefore, changing the trivialization
by $b_+(z)$, we obtain the $q$-connection
$$
A(z) = b_+(zq)^{-1} \wt{A}(z) b_+(z) \in
N_-(z)\prod_i((\phi_i(z)^{\check{\alpha}_i}s_i
)N_-(z) \; \cap \; B_+(z).
$$
\end{proof}

\subsection{Explicit representatives}

Our proof of Theorem \ref{gen rel pos} relies on the following general
result, which might be of independent interest.

For a field $F$, consider the group $G(F)$ and the corresponding
subgroups $N_-(F)$, $B_+(F)$, and $H(F)$. As before, we denote by
$s_i$ a lifting of $w_i \in W_G$ to $G(F)$.

\begin{Thm}    \label{gen elt}
Let $F$ be any field, and fix $\lambda_i \in F^\times,
i=1,\ldots,r$. Then every element of the set
$N_-\prod_i\lambda_i^{\check{\alpha}_i}s_iN_- \; \cap \; B_+$ can be written
in the form
\begin{equation}    \label{gicheck}
\prod_i g_i^{\check{\alpha}_i}e^{\frac{\lambda_i t_i}{g_i}e_i}, \qquad
g_i \in F^\times,
\end{equation}
where each $t_i \in F^\times$ is determined by the lifting $s_i$.
\end{Thm}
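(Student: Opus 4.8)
The plan is to analyze an arbitrary element $b=n\,\Theta\,n'$ of the intersection, where $n,n'\in N_-$ and $\Theta=\prod_i\lambda_i^{\check\alpha_i}s_i$ is the fixed lift of the Coxeter element $c$. Since $b\in B_+$ we may write $b=hu$ with $h\in H$ and $u\in N_+$, and since $N_-\Theta N_-\subset B_-cB_-$ while $h\in B_-$, the factor $u=h^{-1}b$ lies in $N_+\cap B_-cB_-$. The statement then splits into a \emph{support claim}, that $u\in\prod_iX_{\alpha_i}$ (only the simple root subgroups occur), and a \emph{coefficient claim}, that the torus part is $\prod_ig_i^{\check\alpha_i}$ and the coefficient of $e_i$ equals $\lambda_it_i/g_i$ for some $g_i\in F^\times$. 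Because the map $(c_1,\dots,c_r)\mapsto\prod_ie^{c_ie_i}$ into $N_+$ is injective — it is inverted by the projection $N_+\to N_+/[N_+,N_+]$ — once the support claim is known the $g_i$, and hence the $t_i$, are uniquely determined, so the two claims together yield the asserted normal form \eqref{gicheck}.

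The engine is the rank-one computation. When $r=1$, so that we work in the $\mathrm{SL}_2$ attached to $\alpha_i$, a direct matrix calculation yields
\begin{equation}\label{rank1id}
\lambda_i^{\check\alpha_i}s_i\,e^{y f_i}=e^{\ell f_i}\,(y\lambda_i t_i)^{\check\alpha_i}\,e^{(1/y)e_i},\qquad \ell\in F,\ y\in F^\times,
\end{equation}
where $t_i\in F^\times$ is the constant determined by the chosen lift $s_i$. Reading the right-hand side modulo the left $X_{-\alpha_i}$-factor and setting $g_i=y\lambda_it_i$ produces exactly $g_i^{\check\alpha_i}e^{(\lambda_it_i/g_i)e_i}$, which settles the base case; note that $g_i$ and $\lambda_it_i/g_i$ lie automatically in $F^\times$ because \eqref{rank1id} requires $y\neq 0$.

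For general $r$ I would argue by induction, using \eqref{rank1id} to dispose of the generators one at a time in the fixed Coxeter order. Given $b=n\,\Theta\,n'\in B_+$, peeling the first factor $\lambda_1^{\check\alpha_1}s_1$ and applying \eqref{rank1id} to the $X_{-\alpha_1}$-component sitting next to it isolates a torus factor, the simple-root unipotent $e^{(\bullet)e_1}$, and a residual factor in $X_{-\alpha_1}$; this leaves a configuration governed by $\Theta'=\prod_{i\ge 2}\lambda_i^{\check\alpha_i}s_i$, to which the inductive hypothesis applies once the residual negative factors have been transported back into $N_-$. The transport is carried out by the conjugation rule $s_jX_{-\beta}s_j^{-1}=X_{-w_j(\beta)}$ together with the tori (which only rescale), while the simple-root and torus pieces already produced are carried to the left, the tori merely readjusting the coefficients of the surviving $e_j$ by a redefinition of the $g_j$.

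The step I expect to be the main obstacle is the support claim, equivalently $N_+\cap B_-cB_-\subseteq\prod_iX_{\alpha_i}$: one must verify that no non-simple positive root ever survives in $u$. This is precisely where the Coxeter property of $c$ is indispensable. In $s_jX_{-\beta}s_j^{-1}=X_{-w_j(\beta)}$ the root $w_j(\beta)$ is again positive for every positive $\beta\neq\alpha_j$, since a simple reflection keeps every positive root other than $\alpha_j$ positive, so $X_{-w_j(\beta)}$ stays in $N_-$; the sole exception $\beta=\alpha_j$, which flips $X_{-\alpha_j}$ to $X_{\alpha_j}$, is exactly the simple-root contribution already accounted for inside the rank-one identity. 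Because each simple reflection occurs exactly once in $c=\prod_iw_i$, each $\alpha_i$ is produced once and no higher positive root is ever generated, so the residual factors remain in $N_-$ throughout and $u$ is supported on the $X_{\alpha_i}$. Tracking the coefficients through the finitely many applications of \eqref{rank1id} and the intervening torus commutations then yields $\lambda_it_i/g_i$ as the coefficient of $e_i$, which completes the argument.
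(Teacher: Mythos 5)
Your strategy is essentially the paper's: the same rank-one $2\times 2$ identity $\lambda_i^{\check{\alpha}_i}s_i\,e^{yf_i}=n_-\,(y\lambda_it_i)^{\check{\alpha}_i}e^{(1/y)e_i}$ is the engine of Lemma \ref{two forms}, and the iteration over the factors of the Coxeter element --- using that each simple reflection occurs exactly once, so that conjugation by the $s_j$ keeps all residual factors in $N_-$ except for the single $X_{-\alpha_j}\mapsto X_{\alpha_j}$ flip --- is Proposition \ref{ordered}. But there is a genuine gap at the decisive step. Your identity is available only when $y\neq 0$, and you dispose of the degenerate case with the remark that ``$g_i$ and $\lambda_it_i/g_i$ lie automatically in $F^\times$ because the identity requires $y\neq 0$.'' That is circular: nothing in your argument rules out that the $f_i$-coefficient adjacent to $\lambda_i^{\check{\alpha}_i}s_i$ vanishes at some stage. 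When it does, the factor $\lambda_i^{\check{\alpha}_i}s_i$ survives intact as a genuine Weyl-group representative (this is the second alternative in Lemma \ref{two forms}), no $X_{\alpha_i}$-contribution is ``produced,'' and your count ``each $\alpha_i$ is produced once'' fails. The same omission weakens your support claim: even if one proves $u\in\prod_iX_{\alpha_i}$, that set contains elements with some coefficients equal to zero, which is strictly weaker than the form \eqref{gicheck}, where every $e_i$-coefficient $\lambda_it_i/g_i$ is forced to be nonzero.

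What is missing is the concluding Bruhat-decomposition argument. After the iteration one only knows (Proposition \ref{ordered}) that the element equals $n_-\prod_i\epsilon_i$, where each $\epsilon_i$ is either $g_i^{\check{\alpha}_i}e^{(\lambda_it_i/g_i)e_i}$ or the untouched $\lambda_i^{\check{\alpha}_i}s_i$; the latter occur exactly at the indices with vanishing coefficient, and the product then lies in $N_-wB_+$, where $w$ is the product of the corresponding simple reflections. Because $c$ is a Coxeter element, these reflections are distinct, so the word is reduced and $w=1$ only if it is empty; and by the disjointness of the double cosets $B_-wB_+$ (cf.\ \eqref{Bruhat}), an element of $N_-wB_+$ with $w\neq 1$ cannot lie in $B_+$. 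Only this step converts membership in $B_+$ into the nonvanishing of every coefficient, which is what legitimizes applying your rank-one identity at every index. (A secondary, fixable issue: peeling $\Theta$ from the left is awkward, since there is no $N_-$-factor immediately to the right of $\lambda_1^{\check{\alpha}_1}s_1$ until $n'$ has been conjugated across the remaining $s_i$; the paper processes the product from the right, where the lemma applies to $\lambda_i^{\check{\alpha}_i}s_i\,n_-$ with $n_-\in N_-$ arbitrary.) Once you add the case analysis and the final Bruhat step, your argument closes and coincides with the paper's.
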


We start with the following

\begin{Lem}    \label{two forms}
  Every element of $\lambda_i^{\check{\alpha}_i}s_iN_-$ may be written
  in either of the following two forms:
$$n_- \lambda_i^{\check{\alpha}_i}s_i \qquad \text{or} \qquad
n_- g^{\check{\alpha}_i}e^{\frac{\lambda_it_i}{g}e_i}$$
for some $n_-\in N_-(F)$, $g \in F^\times$, and with each $t_i \in
F^\times$ determined by the lifting $s_i$.
\end{Lem}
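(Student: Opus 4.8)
The plan is to isolate the rank-one subgroup attached to $\alpha_i$ and to sweep every other root direction harmlessly into the output $n_-$. Let $N_-'\subset N_-$ be the subgroup generated by the root subgroups $X_{-\beta}$ with $\beta$ a positive root different from $\alpha_i$. Since $w_i$ permutes the positive roots other than $\alpha_i$ among themselves, the identity $s_iX_{-\beta}s_i^{-1}=X_{-w_i(\beta)}$ shows that $s_i$ normalizes $N_-'$, and so does any element of $H$; moreover the commutator $[X_{-\alpha_i},X_{-\beta}]$ lies in $N_-'$ (each root $m\alpha_i+n\beta$ involves a simple root other than $\alpha_i$), so $N_-'$ is normal in $N_-$ and $N_-=N_-'\cdot X_{-\alpha_i}$. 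First I would therefore write a general element of $\lambda_i^{\check\alpha_i}s_iN_-$ as $\lambda_i^{\check\alpha_i}s_i\,v\,u$ with $v\in N_-'$ and $u\in X_{-\alpha_i}$.

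Next I would push the $N_-'$-factor to the far left. Using that $s_i$ and $\lambda_i^{\check\alpha_i}\in H$ both normalize $N_-'$, one obtains
\[
\lambda_i^{\check\alpha_i}s_i\,v\,u \;=\; v_2\,\lambda_i^{\check\alpha_i}s_i\,u,\qquad
v_2=\lambda_i^{\check\alpha_i}\,s_i\,v\,s_i^{-1}\,\lambda_i^{-\check\alpha_i}\in N_-'\subset N_-.
\]
This reduces the statement to analysing the single element $\lambda_i^{\check\alpha_i}s_i\,u$, which lives entirely in the rank-one subgroup $\SL(2)\to G$ determined by the $\mathfrak{sl}_2$-triple $(e_i,f_i,\check\alpha_i)$; the factor $v_2$ will simply be incorporated into the final $n_-$.

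The heart of the argument is then a $2\times 2$ computation. Writing $u=e^{xf_i}$, taking the lift in the rank-one subgroup as the anti-diagonal matrix $\left(\begin{smallmatrix}0&t_i\\-t_i^{-1}&0\end{smallmatrix}\right)$ (this is where the constant $t_i$, determined by $s_i$ through $s_ie^{xf_i}s_i^{-1}=e^{-t_i^2x\,e_i}$, enters), and $\lambda_i^{\check\alpha_i}=\diag(\lambda_i,\lambda_i^{-1})$, I would compute
\[
\lambda_i^{\check\alpha_i}s_i\,e^{xf_i}=\begin{pmatrix}\lambda_i t_i x & \lambda_i t_i\\[2pt] -\lambda_i^{-1}t_i^{-1} & 0\end{pmatrix}.
\]
If $x=0$ this is $\lambda_i^{\check\alpha_i}s_i$, giving the first form with $n_-=v_2$. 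If $x\neq 0$ the $(1,1)$-entry is nonzero, so the matrix lies in the big cell and admits a Gauss (lower $\cdot$ diagonal $\cdot$ upper) factorization; reading it off yields $g=\lambda_i t_i x$ and upper factor $e^{(\lambda_i t_i/g)e_i}$, that is, the second form $n_-\,g^{\check\alpha_i}e^{\frac{\lambda_i t_i}{g}e_i}$ once the lower factor in $X_{-\alpha_i}$ is merged with $v_2$. Checking that the resulting $(2,2)$-entry vanishes confirms the factorization, and the dichotomy $x=0$ versus $x\neq 0$ shows the two forms are exhaustive.

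The step I expect to be the main obstacle is the bookkeeping around the lift rather than the matrix algebra: one must verify that $s_i$ and the torus genuinely preserve $N_-'$, so that all non-$\alpha_i$ directions can be absorbed into $n_-$, and one must pin down $t_i$ intrinsically from the action of $s_i$ on the $\alpha_i$-root subgroups. Once this reduction is in place, the rank-one computation and the resulting case split are routine.
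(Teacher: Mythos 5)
Your proof is correct, and the heart of it --- the $2\times 2$ identity
\[
\lambda_i^{\check\alpha_i}\,s_i\,e^{xf_i}
=\begin{pmatrix}\lambda_i t_i x & \lambda_i t_i\\ -\lambda_i^{-1}t_i^{-1} & 0\end{pmatrix}
\]
followed by the dichotomy $x=0$ (stay in the form $n_-\lambda_i^{\check\alpha_i}s_i$) versus $x\neq 0$ (Gauss-factor in the big cell to get $n_-\,g^{\check\alpha_i}e^{\frac{\lambda_i t_i}{g}e_i}$ with $g=\lambda_i t_i x$) --- is exactly the computation in the paper. Where you differ is in how the non-$\alpha_i$ directions are absorbed into $n_-$. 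The paper writes an arbitrary element of $N_-$ as an ordered product $\prod_k e^{a_kf_k}\cdot\prod_{s<r}e^{a_{s,r}[f_s,f_r]}\cdots$, splits into the cases $a_i\neq 0$ and $a_i=0$, and then moves the resulting factor $g_i^{\check\alpha_i}e^{\cdots e_i}$ or $\lambda_i^{\check\alpha_i}s_i$ rightward past the higher-commutator exponentials, using that $s_i$ and $e^{b_ie_i}$ conjugate $\exp([\mathfrak n_-,\mathfrak n_-])$ into $\exp(\mathfrak n_-)$. You instead use the group-theoretic decomposition $N_-=N_-'\cdot X_{-\alpha_i}$, where $N_-'=N_-\cap w_iN_-w_i^{-1}$ is normal in $N_-$ and normalized by both $s_i$ and $H$, so that the entire $N_-'$-factor is conjugated to the left in a single step and the problem reduces cleanly to the rank-one subgroup. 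Your route buys a cleaner reduction: it avoids the case split on the coefficient $a_i$, avoids the informal bookkeeping with ellipses of higher commutators, and makes the exhaustiveness of the two forms transparent as the dichotomy $x=0$ versus $x\neq 0$. The paper's version, on the other hand, keeps the explicit coordinates $a_k$ visible and feeds directly into the iterated statement of Proposition \ref{ordered}, where the same absorption argument is applied once per simple reflection in the Coxeter word. Your identification of $t_i$ via $s_ie^{xf_i}s_i^{-1}=e^{-t_i^2x\,e_i}$ is consistent with the paper's choice of lift $\left(\begin{smallmatrix}0&t_i\\-t_i^{-1}&0\end{smallmatrix}\right)$, so no discrepancy arises there.
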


\begin{proof}
First, note that $\lambda_i^{\check{\alpha}_i}s_ie^{af_i}$ with $a\neq
0$ is of the form
$n_- g^{\check{\alpha}_i} e^{\frac{\lambda_it_i}{g}e_i}$, where $n_-\in
N_-$ and $g=a\lambda_it_i$.
This follows from the equality of $2\times2$ matrices
$$
\begin{pmatrix}
    \lambda_i &0\\
  0  &\lambda_i^{-1}
 \end{pmatrix} 
 \begin{pmatrix}
    0 &t_i\\
  -t_i^{-1}  &0
 \end{pmatrix}
\begin{pmatrix}
  1   &0\\  
  a  &1
 \end{pmatrix} =
 \begin{pmatrix}
  1   &0\\
  n_-  &1
 \end{pmatrix}
 \begin{pmatrix}
    a\lambda_it_i &\lambda_it_i\\
  0  &(a\lambda_it_i)^{-1}
 \end{pmatrix} \,,
 $$
where $n_-=-\frac{1}{a t_i^2\lambda_i^2}$.

An arbitrary element $u$ of $N_-$ can be expressed as a product
$$u=\prod_k e^{a_kf_k} \prod_{s<r}e^{a_{s,r}[f_s,f_r]}\dots,$$
where the ellipses stand for exponentials of higher commutators and we
assume a particular order in the first two products.  Notice that
if $a_i\neq 0$, then
$$\lambda^{\check{\alpha}_i}_is_i\prod_k
e^{a_kf_k}=n^1_-g_i^{\check{\alpha}_i}e^{\frac{\lambda_it_i}{g_i}e_i},$$
for some $n^1_-\in N_-$ and where $g_i=a_i t_i\lambda_i$. On the other
hand, if $a_i=0$, then
$$\lambda^{\check{\alpha}_i}_is_i\prod_k
e^{a_kf_k}=n^2_-\lambda_i^{\check{\alpha}_i}s_i$$ 
for some $n^2_-\in N_-$ since $s_i(f_k)$, $k\neq i$, belong to the Lie
algebra of $N_-$.
Therefore, we have 
\begin{equation}\label{eq:firststep}
\lambda^{\check{\alpha}_i}_is_in_ -=
\begin{cases}
{n}^1_-
g_i^{\check{\alpha}_i}e^{\frac{\lambda_it_i}{g_i}e_i}\prod_{s<r}e^{a_{s,r}[f_s,f_r]}\dots&
\text{if } a_i\neq 0,\\
 {n}^2_-\lambda_i^{\check{\alpha}_i}s_i\prod_{s<r}e^{a_{s,r}[f_s,f_r]}\dots&
 \text{if } a_i= 0.
\end{cases}
\end{equation}
Denote one of the products of commutators and higher commutators in \eqref{eq:firststep} by
  $X$. Clearly, $X \in \text{exp}([{\mathfrak n}_-,{\mathfrak n}_-])$,
  and therefore $s_i(X)$ and $e^{b_ie_i}Xe^{-b_ie_i}$ 
  belong to $\text{exp}({\mathfrak n}_-)$. This
  allows us to move the elements
  $g_i^{\check{\alpha}_i}e^{\frac{\lambda_it_i}{g_i}e_i}$,
  $\lambda_i^{\check{\alpha}_i}s_i$ to the right of the products in
  \eqref{eq:firststep} at the expense of multiplying $n^{1}_-, n^{2}_-$
  by additional elements from $N_-$. This completes the proof of the
  lemma.
\end{proof}

The following proposition is proved by repeated
applications of this lemma. Suppose that $s=s_{i_1} \ldots
s_{i_r}$. Below, the product over $i$ means the ordered product
corresponding to this decomposition.

\begin{Prop}    \label{ordered}
For every element $A$ of
  $N_-\prod_i\lambda_i^{\check{\alpha}_i}s_iN_-$, there exists a
  particular ordered subset $J = \{j_1, ..., j_k\}\subset \{i_1, \dots,
  i_s\}$ such that $A$ can be written as $n_- \prod_i \epsilon_i$,
where $n_- \in N_-$ and
$\epsilon_i=g_i^{\check{\alpha}_i}e^{\frac{\lambda_it_i}{g_i}e_i}$ for
$i \in J$ and $\lambda_i^{\check{\alpha}_i}s_i$ for $i \notin J$,
with $g_i, t_i \in \C^\times$.
\end{Prop}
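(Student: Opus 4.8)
The plan is to induct on the number of factors in the product $\prod_i \lambda_i^{\check{\alpha}_i} s_i$, peeling off factors one at a time from the left and repeatedly invoking Lemma \ref{two forms}. The key observation is that an arbitrary element $A$ of $N_- \prod_i \lambda_i^{\check{\alpha}_i} s_i N_-$ can be written as $n_-^{(0)} \lambda_{i_1}^{\check{\alpha}_{i_1}} s_{i_1} \cdot u_1 \cdot \lambda_{i_2}^{\check{\alpha}_{i_2}} s_{i_2} \cdot u_2 \cdots$, where we have inserted, between consecutive Weyl group factors, arbitrary elements of $N_-$; this is legitimate because the trailing $N_-$ in the product can be absorbed and $N_-$ is a group. (More precisely, I would first rewrite $A$ in a standard form in which each $\lambda_{i_j}^{\check{\alpha}_{i_j}} s_{i_j}$ is followed by some $u_j \in N_-$.)

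First I would apply Lemma \ref{two forms} to the leftmost pair $\lambda_{i_1}^{\check{\alpha}_{i_1}} s_{i_1}$ together with the $N_-$-factor immediately to its right. The lemma tells us that $\lambda_{i_1}^{\check{\alpha}_{i_1}} s_{i_1} u_1$ can be written in exactly one of two forms: either $n_- \, \epsilon_{i_1}$ with $\epsilon_{i_1} = g_{i_1}^{\check{\alpha}_{i_1}} e^{\frac{\lambda_{i_1} t_{i_1}}{g_{i_1}} e_{i_1}}$ (this is the case $a_{i_1} \neq 0$, and we put $i_1 \in J$), or $n_- \, \lambda_{i_1}^{\check{\alpha}_{i_1}} s_{i_1}$ (the case $a_{i_1} = 0$, and we put $i_1 \notin J$). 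In either case the output begins with a factor $n_- \in N_-$, which I would move all the way to the left to merge with the accumulating $N_-$-prefix, using that conjugation of $N_-$ by any $\epsilon_{i_1}$ or by $\lambda_{i_1}^{\check{\alpha}_{i_1}} s_{i_1}$ lands back in $N_-$ (for the latter, since $s_{i_1}(f_k) \in \mathfrak{n}_-$ for $k \neq i_1$, exactly as exploited in the proof of Lemma \ref{two forms}). This determines whether $i_1 \in J$ and produces the desired $\epsilon_{i_1}$ or $\lambda_{i_1}^{\check{\alpha}_{i_1}} s_{i_1}$ as the first nontrivial factor.

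The inductive step then treats the remaining product $\lambda_{i_2}^{\check{\alpha}_{i_2}} s_{i_2} u_2 \cdots \lambda_{i_r}^{\check{\alpha}_{i_r}} s_{i_r} u_r$, which has the same shape but one fewer Weyl factor. I would repeat the procedure, at each stage choosing membership in $J$ according to which branch of Lemma \ref{two forms} applies, and commuting the resulting $N_-$-factor leftward past the already-processed $\epsilon_i$'s and $\lambda_i^{\check{\alpha}_i} s_i$'s (again each such commutation preserves $N_-$). After $r$ steps, all Weyl and torus factors have been converted into the $\epsilon_i$ (for $i \in J$) or $\lambda_i^{\check{\alpha}_i} s_i$ (for $i \notin J$) form, with a single residual $n_- \in N_-$ collected on the far left, giving $A = n_- \prod_i \epsilon_i$ as claimed.

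The main obstacle I anticipate is bookkeeping the commutations cleanly: each time Lemma \ref{two forms} is applied, the $N_-$-element it produces must be pushed past \emph{all} the factors already standing to its left, and one must verify that conjugating an element of $N_-$ by $g_i^{\check{\alpha}_i} e^{\frac{\lambda_i t_i}{g_i} e_i}$ again yields an element of $N_-$. For the torus part $g_i^{\check{\alpha}_i}$ this is immediate since it normalizes $N_-$; for the unipotent part $e^{c e_i}$ one must check that conjugation preserves $N_-$, which follows from the same $\mathfrak{sl}_2$-triple computation underlying Lemma \ref{two forms} (reducing to the rank-one $2 \times 2$ matrix identity displayed there, together with the fact that the relevant higher commutators stay in $\mathfrak{n}_-$). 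Once this closure property is established, the rest is a finite iteration and the proposition follows.
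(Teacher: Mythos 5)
Your overall idea --- repeated application of Lemma \ref{two forms}, one Weyl factor at a time, collecting the leftover unipotent pieces into a single $N_-$-prefix --- is the intended argument (the paper states only that the proposition ``is proved by repeated applications of this lemma''). However, your execution contains a genuine gap: the closure property you rely on is false. You claim that conjugating an arbitrary element of $N_-$ by $\lambda_i^{\check\alpha_i}s_i$ or by $g_i^{\check\alpha_i}e^{ce_i}$ again lands in $N_-$. It does not: $s_i$ sends the root subgroup $X_{-\alpha_i}$ to $X_{\alpha_i}$, and $e^{ce_i}e^{af_i}e^{-ce_i}$ is not in $N_-$ (already for $\SL(2)$ the displayed $2\times 2$ identity in the proof of Lemma \ref{two forms} shows that $e^{ue_i}e^{vf_i}$ produces a nontrivial torus factor and an $e^{(\cdot)e_i}$ factor). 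The computation in the lemma only establishes closure for elements of $\exp([{\mathfrak n}_-,{\mathfrak n}_-])$ --- i.e.\ products of exponentials of root vectors of height $\le -2$ --- under conjugation by $e^{ce_i}$ and by $s_i$; it says nothing about the $e^{af_j}$ factors, and in particular nothing about $e^{af_i}$. Since the $N_-$-element produced at step $j$ of your iteration is a general element of $N_-$, pushing it leftward past the already-processed $\epsilon_{i_1},\dots,\epsilon_{i_{j-1}}$ is exactly the operation that fails. (A secondary issue: the element of $N_-\prod_i\lambda_i^{\check\alpha_i}s_iN_-$ has unipotent factors only at the two ends, and you cannot redistribute the trailing one into the intermediate slots $u_1,\dots,u_{r-1}$ without already solving the problem, for the same reason --- $s_i$ does not normalize $N_-$.)

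The fix is to run the iteration from the \emph{right} rather than the left. Write $A=n_0\,\sigma_{i_1}\cdots\sigma_{i_r}\,n$ with $\sigma_{i_j}=\lambda_{i_j}^{\check\alpha_{i_j}}s_{i_j}$ and $n_0,n\in N_-$. Apply Lemma \ref{two forms} to $\sigma_{i_r}n$ to get $m_r\tau_{i_r}$ with $m_r\in N_-$ and $\tau_{i_r}$ equal to either $\epsilon_{i_r}$ or $\sigma_{i_r}$; the new factor $m_r$ now sits immediately to the \emph{right} of $\sigma_{i_{r-1}}$, so the lemma applies again to $\sigma_{i_{r-1}}m_r$, and so on. After $r$ steps one obtains $A=n_0m_1\tau_{i_1}\cdots\tau_{i_r}$ with all $\tau$'s in the prescribed order and no conjugation of a general $N_-$-element past any $e^{ce_i}$ or $s_i$ ever required --- all the delicate conjugations are confined to the interior of each single application of Lemma \ref{two forms}, where they act only on $\exp([{\mathfrak n}_-,{\mathfrak n}_-])$. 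With this reversal of direction your argument becomes correct and coincides with the paper's.
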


\medskip

We are now ready to prove Theorem \ref{gen elt}.

\medskip

\noindent{\em Proof of Theorem \ref{gen elt}.} In the notation of
Proposition \ref{ordered}, $n_- \prod_i \epsilon_i$ belongs to $N_-wB_+$,
where $w=s_{j_1}\dots s_{j_k}$. Such an element can only belong to
$B_+$ if $w=1$, i.e. if the subset $J = \{j_1,\dots, j_k\}$ is
empty. This proves the theorem.\qed

\subsection{Connection to Fomin-Zelevinsky factorization}

Theorem \ref{gen elt} is closely related to a statement about the
Fomin-Zelevinsky factorization of a particular double Bruhat cell
(defined over $\C$) \cite{FZ}.  Here, we will only recall this
factorization for double Bruhat cells of the form $G^w=B_+\cap B_- w
B_-$, where $w\in W$. If $w=w_{i_1}\dots w_{i_k}$ is a reduced
expression, Fomin and Zelevinsky have shown that the map
\begin{align} \notag
H\times
\C^k &\rightarrow G \\    \label{FZmap}
(a,u_1,\dots,u_k) &\mapsto a e^{u_1 e_{i_1}}\dots
e^{u_k e_{i_k}}
\end{align}
induces an isomorphism of algebraic varieties $\psi^w:H\times
(\C^\times)^k\cong G^w_0$, where $G^w_0$ is a certain Zariski-open and
dense subset of $G^w$~\cite{FZ}. In particular, $G^w$ is irreducible,
and the dimension of $G^w$ is $r+\ell(w)$.

If one fixes a lifting $\wt{w}$ of $w$, one can also consider the
subvariety $C^{\wt{w}}=B_+\cap N_- \wt{w} N_-$.  It is easy to
see that $G^w=H C^{\wt{w}}$, so $C^{\wt{w}}$ is irreducible and
has dimension $\ell(w)$.  Moreover, $C^{\wt{w}}_0=C^{\wt{w}}\cap
G^w_0$ is a Zariski-open and dense subset of $C^{\wt{w}}$, and any
element of $C^{\wt{w}}_0$ can be expressed uniquely in terms of the
Fomin-Zelevinsky map \eqref{FZmap}.

We now specialize to the case of the Coxeter element $c$. The
factorization in Theorem \ref{gen elt} yields an explicit version of
the Fomin-Zelevinsky factorization for $C^{\wt{c}}_0$, where
$\wt{c}=\prod \lambda_i^{\check{\alpha}_i}s_i$.  Theorem \ref{gen
  elt} thus implies that $C^{\wt{c}}_0=C^{\wt{c}}$ and
$G^c_0=G^c$, i.e., in this case, the Fomin-Zelevinsky map \eqref{FZmap}
gives a factorization for the {\em entire} double Bruhat cell.  In
fact, the same argument applies to show $G^w_0=G^w$ for any $w$ whose
reduced decompositions do not involve repeated simple reflections.  We
remark that this statement is apparently known to specialists and may
also be proved using cluster algebra
techniques.\footnote{We thank Greg Muller for a discussion of these
  matters.}

\subsection{$q$-opers and Miura $q$-opers with regular singularities}

Let $\{ \Lambda_i(z) \}_{i=1,\ldots,r}$ be a collection of
nonconstant polynomials.

\begin{Def}    \label{d:regsing}
  A $(G,q)$-{\em oper with regular singularities determined by $\{
    \Lambda_i(z) \}_{i=1,\ldots,r}$} is a $q$-oper on $\P^1$ whose
  $q$-connection \eqref{qop1} may be written in the form
\begin{equation}    \label{Lambda}
A(z)= n'(z)\prod_i(\Lambda_i(z)^{\check{\alpha}_i} \, s_i)n(z), \qquad
n(z), n'(z)\in N_-(z).
\end{equation}
\end{Def}

\begin{Def}    \label{MiuraRS}
  {\em A Miura $(G,q)$-oper with regular singularities determined by
polynomials $\{ \Lambda_i(z) \}_{i=1,\ldots,r}$} is a Miura
  $(G,q)$-oper such that the underlying $q$-oper has
regular singularities determined by $\{ \Lambda_i(z)
\}_{i=1,\ldots,r}$.
\end{Def}

According to Corollary \ref{gen rel pos1}, we can write the
$q$-connection underlying such a Miura $(G,q)$-oper in the form
$$
A(z)\in N_-(z)\prod_i((\Lambda_i(z)^{\check{\alpha}_i}s_i
)N_-(z) \; \cap \; B_+(z).
$$

Recall Theorem \ref{gen elt}. Observe that we can choose liftings
$s_i$ of the simple reflections $w_i \in W_G$ in such a way that
$t_i=1$ for all $i=1,\ldots,r$. From now on, we will only consider
such liftings.

The following theorem follows from Theorem \ref{gen elt} in the case
$F=\C(z)$ and gives an explicit parametrization of generic elements
of the above intersection.

\begin{Thm}    \label{gen elt1}
Every element of
$N_-(z)\prod_i(\Lambda_i(z))^{\check{\alpha}_i}s_i)N_-(z) \; \cap \;
B_+$ may be written in the form
\begin{equation}    \label{form of A}
A(z)=\prod_i
g_i(z)^{\check{\alpha}_i} \; e^{\frac{\Lambda_i(z)}{g_i(z)}e_i}, \qquad
g_i(z) \in \C(z)^\times.
\end{equation}
\end{Thm}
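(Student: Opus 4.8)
The plan is to obtain this as an immediate specialization of Theorem \ref{gen elt} to the field $F = \C(z)$ of rational functions in $z$. First I would note that with this choice of $F$ the relevant subgroups become $N_-(F) = N_-(z)$, $B_+(F) = B_+(z)$, and $H(F) = H(z)$, so that the intersection
\[
N_-(z)\prod_i \Lambda_i(z)^{\check{\alpha}_i} s_i\, N_-(z) \;\cap\; B_+(z)
\]
appearing in the statement is precisely the set $N_-\prod_i \lambda_i^{\check{\alpha}_i} s_i N_- \cap B_+$ of Theorem \ref{gen elt}, with the parameters taken to be $\lambda_i = \Lambda_i(z)$. The one point to verify here is that each $\Lambda_i(z)$ really lies in $F^\times = \C(z)^\times$; this is automatic, since the $\Lambda_i(z)$ are nonconstant polynomials and hence nonzero elements of $\C(z)$.

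With this identification in place, I would simply invoke Theorem \ref{gen elt}, which asserts that every element of this intersection can be written as $\prod_i g_i(z)^{\check{\alpha}_i} e^{(\Lambda_i(z) t_i / g_i(z))\, e_i}$ with $g_i(z) \in \C(z)^\times$, where each constant $t_i \in \C^\times$ is determined by the chosen lifting $s_i$. To pass from this to the clean formula \eqref{form of A}, I would use the normalization fixed just before the statement: the liftings $s_i$ have been chosen so that $t_i = 1$ for all $i$. Substituting $t_i = 1$ then yields exactly
\[
A(z) = \prod_i g_i(z)^{\check{\alpha}_i}\, e^{\frac{\Lambda_i(z)}{g_i(z)} e_i},
\]
as claimed.

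Because essentially all of the content has already been established in Lemma \ref{two forms}, Proposition \ref{ordered}, and Theorem \ref{gen elt}, there is no substantive obstacle remaining at this point; the proof is purely a matter of specializing the field and inserting the normalization $t_i = 1$. The only bookkeeping to keep in mind is that the ordered product over $i$ in \eqref{form of A} is taken in the same fixed order $(\alpha_{i_1},\dots,\alpha_{i_r})$ that defines the Coxeter element $c$ and its representative $s = \prod_i s_i$, so that it is consistent with the ordering used throughout Theorem \ref{gen elt} and Proposition \ref{ordered}.
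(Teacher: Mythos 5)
Your proposal is correct and matches the paper's own treatment exactly: the paper states that Theorem \ref{gen elt1} follows from Theorem \ref{gen elt} with $F=\C(z)$ and $\lambda_i=\Lambda_i(z)$, after fixing the liftings $s_i$ so that $t_i=1$, which is precisely your argument. The small checks you add (that $\Lambda_i(z)\in\C(z)^\times$ and that the product ordering is the one fixed by the Coxeter element) are sound and consistent with the paper.
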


\begin{Cor}    \label{Miura form}
For every Miura $(G,q)$-oper with regular singularities determined by
the polynomials $\{ \Lambda_i(z) \}_{i=1,\ldots,r}$, the underlying
$q$-connection can be written in the form \eqref{form of A}.
\end{Cor}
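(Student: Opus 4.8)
The plan is to read the result off directly from the two preceding statements, since the corollary simply packages them together. By Definition \ref{MiuraRS}, the $q$-oper underlying a Miura $(G,q)$-oper with regular singularities has a $q$-connection of the form \eqref{Lambda}, and the Miura datum—the $B_+$-reduction preserved by $A$—is exactly what is needed to invoke Corollary \ref{gen rel pos1}. So the whole proof is a short chain: first produce a good trivialization via Corollary \ref{gen rel pos1}, then apply the explicit parametrization of Theorem \ref{gen elt1}.

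First I would apply Corollary \ref{gen rel pos1} to the given Miura $(G,q)$-oper. That corollary is phrased for an arbitrary oper $q$-connection \eqref{qop1}, but in the regular-singularities case each rational function $\phi_i(z)$ is the polynomial $\Lambda_i(z)$, so it specializes verbatim: there is a trivialization of $\cF_G$ over an open dense subset of $\P^1$ in which
$$A(z)\in N_-(z)\left(\prod_i \Lambda_i(z)^{\check{\alpha}_i}s_i\right)N_-(z)\;\cap\; B_+(z).$$
Second, I would apply Theorem \ref{gen elt1} with $F=\C(z)$, which asserts that every element of exactly this intersection can be written as $\prod_i g_i(z)^{\check{\alpha}_i}e^{\Lambda_i(z)e_i/g_i(z)}$ with $g_i(z)\in\C(z)^\times$; this is precisely \eqref{form of A}. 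Here I use the normalization of the liftings $s_i$ fixed just before Theorem \ref{gen elt1}, so that $t_i=1$ and the exponents take the simplified form displayed. Combining the two steps yields the claim.

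The hard part is entirely contained in the inputs rather than in the corollary itself: the genuine content sits in Theorem \ref{gen rel pos} (generic relative position of the two Borel reductions), which underlies Corollary \ref{gen rel pos1}, and in Theorem \ref{gen elt}/\ref{gen elt1} (the Fomin--Zelevinsky-type factorization of the double Bruhat cell). Granting those, the only points left to check are bookkeeping: that $\phi_i=\Lambda_i$ in the regular-singularities case so that Corollary \ref{gen rel pos1} applies as stated, and that the chosen liftings satisfy $t_i=1$.
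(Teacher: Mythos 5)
Your proposal matches the paper's own derivation exactly: the paper obtains the displayed membership $A(z)\in N_-(z)\prod_i(\Lambda_i(z)^{\check{\alpha}_i}s_i)N_-(z)\cap B_+(z)$ from Corollary \ref{gen rel pos1} and then invokes Theorem \ref{gen elt1} (with the liftings normalized so that $t_i=1$) to conclude. The argument is correct and identical in structure to the paper's.
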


\section{$Z$-twisted $q$-opers and Miura $q$-opers}    \label{Sec:Ztw}

Next, we consider a class of (Miura) $q$-opers that are gauge
equivalent to a constant element of $G$ (as $(G,q)$-connections).  Let
$Z$ be an element of the maximal torus $H$. Since $G$ is
simply connected, we can write
\begin{equation}    \label{Z}
Z = \prod_{i=1}^r \zeta_i^{\check\alpha_i}, \qquad \zeta_i \in
\C^\times.
\end{equation}

\begin{Def}    \label{Ztwoper}
  A {\em $Z$-twisted $(G,q)$-oper} on $\mathbb{P}^1$ is a $(G,q)$-oper
  that is equivalent to the constant element $Z \in H \subset H(z)$
  under the $q$-gauge action of $G(z)$, i.e. if $A(z)$ is the
  meromorphic oper $q$-connection (with respect to a particular
  trivialization of the underlying bundle), there exists $g(z) \in
  G(z)$ such that
\begin{eqnarray}    \label{Ag}
A(z)=g(qz)Z g(z)^{-1}.
\end{eqnarray}
\end{Def}

\begin{Def}    \label{ZtwMiura}
A {\em $Z$-twisted Miura $(G,q)$-oper} is a Miura $(G,q)$-oper on
$\mathbb{P}^1$ that is equivalent to the constant element $Z \in H
\subset H(z)$ under the $q$-gauge action of $B_+(z)$, i.e.
\begin{eqnarray}    \label{gaugeA}
A(z)=v(qz)Z v(z)^{-1}, \qquad v(z) \in B_+(z).
\end{eqnarray}
\end{Def}

\subsection{From $Z$-twisted $q$-opers to Miura $q$-opers}

It follows from Definition \ref{Ztwoper} that any $Z$-twisted
$(G,q)$-oper is also $Z'$-twisted for any $Z'$ in the $W_G$-orbit of
$Z$. However, if we endow it with the structure of a $Z$-twisted Miura
$(G,q)$-oper (by adding a $B_+$-reduction $\cF_{B_+}$ preserved by the
oper $q$-connection), then we fix a specific element in this
$W_G$-orbit.

Indeed, suppose that $(\cF_G,A,\cF_{B_-})$ is a $Z$-twisted
$(G,q)$-oper. Choose a trivialization of $\cF_G$ on a Zariski open
dense subset $U$ of $\P^1$ with respect to which $A$ is equal to
$Z$. Then a choice of an $A$-invariant $B_+$-reduction $\cF_{B_+}$ of
$\cF_G$ on a Zariski open dense subset $V \subset U$ is the same as a
choice of a $Z$-invariant $B_+$-reduction of the fiber $\cF_{G,v}$ of
$\cF_G$ at any point $v \in V$. Our trivialization of $\cF_G|_U$
identifies $\cF_{G,v}$ with $G$, and hence a $B_+$-reduction of
$\cF_{G,v}$ with a right coset $gB_+$ of $G$. The $Z$-invariance of
this $B_+$-reduction means that $gB_+$, viewed as a point of the flag
variety $G/B_+$, is a fixed point of $Z$. This is equivalent to $Z \in
gB_+g^{-1}$ or $g^{-1}Zg \in B_+$.

Adding the $B_+$-reduction $\cF_{B_+}$ corresponding to a coset $gB_+$
satisfying this property to our $(G,q)$-oper $(\cF_G,A,\cF_{B_-})$, we
endow it with the structure of a Miura $(G,q)$-oper. A choice of
trivialization of $\cF_{B_+}$ is equivalent to a choice of an
identification of the coset $gB_+$ with $B_+$, which is the same as a
choice of an element of this coset (this element corresponds to $1 \in
B_+$ under the given isomorphism $B_+ \simeq gB_+$). Without
loss of generality, we denote this element also by $g$. Then, with
respect to the corresponding trivialization of the $(G,q)$-oper bundle
$\cF_G$, the $q$-connection becomes equal to $g^{-1}Zg \in
B_+$. However, note that because we can multiply $g$ on the right by
any element of $B_+$, we still have the freedom to conjugate
$g^{-1}Zg$ by an element of $B_+$, and there is a unique element in
the $B_+$-conjugacy class of $g^{-1}Zg$ of the form $w^{-1}Zw$, where
$w \in W_G$. Denote this element by $Z'$. We now conclude that the
Miura $(G,q)$-oper obtained by endowing our $(G,q)$-oper with the
$B_+$-reduction $\cF_{B_+}$ corresponding to $gB_+$ is $Z'$-twisted.

As a result, we also construct a map $\mu_Z$ from $(G/B_+)^Z = \{ f
\in G/B_+ \; | \; Z \cdot f = f \}$ to $W_G \cdot Z$, sending $gB_+$
with $g^{-1}Zg \in B_+$ to the unique element $Z'$ of $W_G \cdot Z$
that is $B_+$-conjugate to $g^{-1}Zg$. According to the above
construction, the set of points of the fiber $\mu_Z^{-1}(Z')$ of
$\mu_Z$ over a specific $Z' \in W_G \cdot Z$ is in bijection with the
set of $A$-invariant $B_+$-reductions $\cF_{B_+}$ on our $(G,q)$-oper
such that the corresponding Miura $(G,q)$-oper is $Z'$-twisted.

Thus, we have proved the following result.

\begin{Prop}    \label{Z prime}
  Let $Z \in H$. For any $Z$-twisted $(G,q)$-oper $(\cF_G,A,\cF_{B_-})$
  and any choice of $B_+$-reduction $\cF_{B_+}$ of $\cF_G$ preserved
  by the oper $q$-connection $A$, the resulting Miura $(G,q)$-oper is
  $Z'$-twisted for a particular $Z' \in W_G \cdot Z$.

  Moreover, the set of $A$-invariant $B_+$-reductions $\cF_{B_+}$ on
  the $(G,q)$-oper $(\cF_G,A,\cF_{B_-})$ making it into a $Z'$-twisted
  Miura $(G,q)$-oper is in bijection with the set of points of
  $\mu_Z^{-1}(Z')$.
\end{Prop}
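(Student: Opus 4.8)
\emph{Plan of proof.} The plan is to assemble the construction carried out in the paragraphs above into the two stated assertions, the linchpin being a uniqueness statement for torus representatives inside a single $B_+$-conjugacy class. First I would invoke Definition~\ref{Ztwoper} to trivialize $\cF_G$ over a Zariski open dense $U \subset \P^1$ so that the oper $q$-connection is the constant element $Z$. As explained just above, in this trivialization an $A$-invariant $B_+$-reduction on a dense open $V \subset U$ is the same datum as a single $Z$-fixed point $gB_+ \in (G/B_+)^Z$, i.e. a coset with $g^{-1}Zg \in B_+$; trivializing $\cF_{B_+}$ amounts to choosing the representative $g$, after which the $q$-connection reads $g^{-1}Zg \in B_+$. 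The residual freedom $g \mapsto gb$ with $b \in B_+$ acts on $g^{-1}Zg$ by $B_+$-conjugation, so at this stage the twisting element is only well defined up to $B_+$-conjugacy.

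The key step is to single out a canonical representative of this $B_+$-conjugacy class lying in $H$, which will be the desired $Z'$. Since $Z \in H$ is semisimple, $g^{-1}Zg$ is a semisimple element of $B_+$, hence lies in some maximal torus of $B_+$; as all maximal tori of a connected solvable group are conjugate under its unipotent radical, $g^{-1}Zg$ is $N_+$-conjugate, in particular $B_+$-conjugate, to an element of $H$. For uniqueness I would show that two elements of $H$ that are $B_+$-conjugate must coincide: if $h' = b h b^{-1}$ with $b \in B_+$ and $h,h' \in H$, then $h'h^{-1} = [b,h] \in H \cap N_+ = \{1\}$, using that $[B_+,H] \subseteq N_+$ because $B_+/N_+ \cong H$ is abelian; hence $h'=h$. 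Thus the $B_+$-conjugacy class of $g^{-1}Zg$ meets $H$ in exactly one point $Z'$. Moreover $g^{-1}Zg$, and therefore $Z'$, is $G$-conjugate to $Z$; since $Z' \in H$ and the intersection of the $G$-conjugacy class of $Z$ with $H$ is precisely $W_G \cdot Z$, we get $Z' \in W_G \cdot Z$. Conjugating the connection $g^{-1}Zg$ into $Z'$ by the appropriate element of $B_+$ exhibits, via Definition~\ref{ZtwMiura}, the resulting Miura $(G,q)$-oper as $Z'$-twisted, which proves the first assertion and defines $\mu_Z(gB_+)=Z'$.

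For the second assertion I would simply track the correspondence set up above. Running the construction backwards, every $A$-invariant $B_+$-reduction making the oper into a $Z'$-twisted Miura $(G,q)$-oper arises from a $Z$-fixed coset $gB_+$ whose associated torus representative is exactly $Z'$, i.e. from a point of $\mu_Z^{-1}(Z')$; conversely each point of $\mu_Z^{-1}(Z')$ produces such a reduction. Since the passage between $A$-invariant $B_+$-reductions and points of $(G/B_+)^Z$ established in the first paragraph is a bijection, restricting it over the fiber $\mu_Z^{-1}(Z')$ yields the claimed bijection.

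I expect the main obstacle to be the uniqueness half of the second paragraph. Existence of a torus representative is the standard fact that a semisimple element of $B_+$ lies in a maximal torus and that all such tori are $N_+$-conjugate; but it is the uniqueness — that two distinct Weyl-orbit representatives cannot be $B_+$-conjugate — that makes $\mu_Z$ well defined and single-valued, and everything rests on the cleanly verifiable identities $H \cap N_+ = \{1\}$ and $[B_+,H] \subseteq N_+$.
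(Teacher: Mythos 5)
Your proof is correct and follows essentially the same route as the paper: trivialize so that the $q$-connection is the constant $Z$, identify $A$-invariant $B_+$-reductions with $Z$-fixed points $gB_+$ of the flag variety, and read off the twist from the $B_+$-conjugacy class of $g^{-1}Zg$. The only difference is that you supply an explicit justification (semisimplicity of $g^{-1}Zg$ plus conjugacy of maximal tori in $B_+$ for existence, and $H\cap N_+=\{1\}$ together with $[B_+,H]\subseteq N_+$ for uniqueness) for the step the paper merely asserts, namely that this $B_+$-conjugacy class contains a unique element of $W_G\cdot Z$.
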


Consider two extreme examples of $Z$. If $Z=1$, we always obtain
$Z'=1$, so the set of $B_+$-reductions is the set of points of the
flag manifold $G/B_+$.  On the other hand, if $Z\in H$ is regular,
then the map $\mu_Z$ is a bijection. Thus, for each $Z' \in W_G \cdot
Z$, there is a unique $B_+$-reduction on the $(G,q)$-oper
$(\cF_G,A,\cF_{B_-})$ making it into a $Z'$-twisted Miura
$(G,q)$-oper.  We will focus on the regular case in the rest of this
paper.

We also note that the first statement of Proposition
\ref{Z prime} has a more concrete reformulation:

\begin{Cor}    \label{form}
  Let $Z \in H$. For any $Z$-twisted $(G,q)$-oper whose
  $q$-connection $A(z)$ has the form \eqref{Ag} and any
  $Z'\in W_G\cdot Z$, $A(z)$ can be written in the
  form
\begin{equation}    \label{gaugeA1}
A(z)=n(qz)v(qz) \, Z' \, v(z)^{-1}n(z)^{-1}, \qquad n(z) \in N_-(z),
\quad v(z) \in B_+(z).
\end{equation}
\end{Cor}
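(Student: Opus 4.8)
The plan is to reduce the statement to the generic relative position theorem (Theorem~\ref{gen rel pos}) by exhibiting the gauge element connecting $A(z)$ to $Z'$ as the change of trivialization for a suitable Miura structure. Since $Z' \in W_G\cdot Z$, I would first choose $w\in W_G$ with $Z=\dot w\,Z'\,\dot w^{-1}$, where $\dot w\in N(H)$ is a fixed lift. Because $\dot w$ is independent of $z$ and \eqref{Ag} gives $A(z)=g(qz)Z g(z)^{-1}$, this immediately rewrites as $A(z)=\gamma(qz)\,Z'\,\gamma(z)^{-1}$ with $\gamma(z):=g(z)\dot w\in G(z)$. Everything then comes down to factoring $\gamma(z)=n(z)v(z)$ with $n(z)\in N_-(z)$ and $v(z)\in B_+(z)$, i.e.\ to showing that $\gamma(z)$ lies in the big Bruhat cell $B_-(z)B_+(z)=N_-(z)B_+(z)$; once this is established, substituting $\gamma(qz)=n(qz)v(qz)$ (valid because $\dot w$ is constant) yields exactly the desired form \eqref{gaugeA1}.

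To produce this factorization I would interpret $\gamma(z)$ geometrically. Working in the oper trivialization $\imath_{B_-}$ — with respect to which $\cF_{B_-}$ is the standard constant reduction $B_-\subset G$ and $A$ has oper form — consider the $B_+$-reduction $\cF_{B_+}$ given by the section $z\mapsto \gamma(z)B_+$ of $G/B_+$. A direct check shows it is preserved by $A$: indeed $A(z)\,\gamma(z)B_+=\gamma(qz)Z'\gamma(z)^{-1}\gamma(z)B_+=\gamma(qz)Z'B_+=\gamma(qz)B_+$, using $Z'\in H\subset B_+$. Hence $(\cF_G,A,\cF_{B_-},\cF_{B_+})$ is a genuine Miura $(G,q)$-oper, and this is precisely the reduction produced by the construction preceding Proposition~\ref{Z prime}.

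Next I would apply Theorem~\ref{gen rel pos}: the two reductions $\cF_{B_-}$ and $\cF_{B_+}$ are in generic relative position on a dense open subset $V\subset\P^1$. In the oper trivialization $\cF_{B_-}$ is constant equal to $B_-$, so for $x\in V$ generic relative position means precisely that $\gamma(x)$ lies in the open dense cell $B_-B_+$. Since the big cell is the complement of a divisor (the non-vanishing locus of finitely many regular functions on $G$), and these functions are nonzero at the generic point of $V$, they are nonzero in $\C(z)$; therefore $\gamma(z)\in B_-(z)B_+(z)$ as an element of $G(\C(z))$. Writing its $B_-$-factor as $n(z)h(z)$ with $n(z)\in N_-(z)$, $h(z)\in H(z)$ and absorbing $h(z)$ into the $B_+$-factor gives $\gamma(z)=n(z)v(z)$ with $v(z)\in B_+(z)$, which completes the argument.

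The main obstacle is exactly the step that $\gamma(z)$ belongs to the big cell: a priori there is no reason for this, and it is here that the generic-relative-position theorem is indispensable, since it is what upgrades the formal rewriting $A(z)=\gamma(qz)Z'\gamma(z)^{-1}$ (which holds for any $Z'$ in the orbit) into the concrete factorized form \eqref{gaugeA1}. The only remaining care is routine bookkeeping: verifying that the candidate $B_+$-reduction is $A$-invariant so that Theorem~\ref{gen rel pos} legitimately applies, and checking that generic position on a dense open set descends to a factorization over the function field $\C(z)$ rather than merely pointwise.
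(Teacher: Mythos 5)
Your proof is correct and follows essentially the same route as the paper's: both rewrite $A(z)=\gamma(qz)\,Z'\,\gamma(z)^{-1}$ with $\gamma(z)=g(z)\dot w$ for a constant lift $\dot w$ of the Weyl element conjugating $Z$ to $Z'$, and then invoke Theorem~\ref{gen rel pos} to factor $\gamma(z)=n(z)v(z)$ with $n(z)\in N_-(z)$ and $v(z)\in B_+(z)$. The paper states this more tersely, while you spell out the steps it leaves implicit (the $A$-invariance of the reduction $\gamma(z)B_+$, and the passage from pointwise generic position to a factorization over $\C(z)$).
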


\begin{proof} We give an independent proof. Let $Z'=w^{-1}Zw, w \in
  W_G$. Formula \eqref{Ag} implies
\begin{eqnarray}    \label{Ag1}
(g(qz)s)^{-1}A(z)(g(z)s)= Z',
\end{eqnarray}
where $s$ is a lifting of $w$ to $N(H)$. Since $Z' \in B_+ \subset
B_+(z)$, Theorem \ref{gen rel pos} implies that
$$
g(z)s = n(z)v(z), \qquad n(z) \in N_-(z), \quad
v(z) \in B_+(z).
$$
Substituting this back into formula \eqref{Ag1}, we obtain formula
\eqref{gaugeA1}.
\end{proof}

\subsection{The associated Cartan $q$-connection}    \label{Hqconn}

{}From now on, let $Z$ be a regular element of the maximal torus
$H \subset G$.

Consider a Miura $(G,q)$-oper with regular singularities determined by
polynomials $\{ \Lambda_i(z) \}_{i=1,\ldots,r}$. By Corollary
\ref{Miura form}, the underlying $(G,q)$-connection can be written in
the form \eqref{form of A}. Since it preserves the $B_+$-bundle
$\cF_{B_+}$ that is part of the data of this Miura $(G,q)$-oper (see
Definition \ref{Miura}), it may be viewed as a meromorphic
$(B_+,q)$-connection on $\P^1$. Taking the quotient of $\cF_{B_+}$ by
$N_+ = [B_+,B_+]$ and using the fact that $B/N_+ \simeq H$, we obtain
an $H$-bundle $\cF_{B_+}/N_+$ endowed with an
$(H,q)$-connection, which we denote by $A^H(z)$. According to formula
\eqref{form of A}, it is given by the formula
\begin{equation}    \label{AH}
A^H(z)=\prod_ig_i(z)^{\check{\alpha}_i}.
\end{equation}
We call $A^H(z)$ the \emph{associated Cartan $q$-connection} of the
Miura $q$-oper $A(z)$.

Now, if our Miura $q$-oper is $Z$-twisted (see Definition
\ref{ZtwMiura}), then we also have $A(z)=v(qz)Z v(z)^{-1}$, where
$v(z)\in B_+(z)$.  Since $v(z)$ can be written as
\begin{equation}    \label{vz}
v(z)=
\prod_i y_i(z)^{\check{\alpha}_i} n(z), \qquad n(z)\in N_+(z), \quad
y_i(z) \in \C(z)^\times,
\end{equation}
the Cartan $q$-connection $A^H(z)$ has the form
\begin{equation}    \label{AH1}
A^H(z)=\prod_i
y_i(qz)^{\check{\alpha}_i} \; Z \; \prod_i y_i(z)^{-\check{\alpha}_i}
\end{equation}
and hence we will refer to $A^H(z)$ as a $Z$-{\em twisted Cartan
  $q$-connection}. This formula shows that $A^H(z)$ is completely
determined by $Z$ and the rational functions $y_i(z)$. Indeed,
comparing this equation with \eqref{AH} gives
\begin{equation}    \label{giyi}
g_i(z)=\zeta_i\frac{y_i(qz)}{y_i(z)}\,.
\end{equation}

It is also the case that $A^H(z)$ determines the $y_i(z)$'s uniquely
up to scalar.  Indeed, if
$$
\prod_i
\wt{y}_i(qz)^{\check{\alpha}_i}  Z \prod_i
\wt{y}_i(z)^{-\check{\alpha}_i}=Z
$$
as well, then
$$
\prod_i
\left(\frac{\wt{y}_i(qz)}{y_i(qz)}\right)^{\check{\alpha}_i} Z  \prod_i
\left(\frac{\wt{y}_i(z)}{y_i(z)}\right) ^{-\check{\alpha}_i}=Z.
$$
The commutativity of $H(z)$ immediately implies that the rational
functions $h_i(z)=\frac{\wt{y}_i(z)}{y_i(z)}$ satisfy
$h_i(qz)=h_i(z)$.  Since $q$ is not a root of unity, we find that
$h_i(z)\in\C^\times$.

\section{Nondegenerate Miura-Pl\"ucker
  $q$-opers}    \label{Sec:nondegMiura}

Our main goal is to link Miura $q$-opers to solutions of a certain
system of equations called the $QQ$-system, which is in turn related
to the system of Bethe Ansatz equations. We do this in two steps:
first, we introduce the notion of $Z$-twisted Miura-Pl\"ucker
$(G,q)$-opers; these are Miura $q$-opers satisfying a slightly weaker
condition than the $Z$-twisted Miura $q$-opers discussed in the
previous section.  Second, we impose two \emph{nondegeneracy}
conditions on these $Z$-twisted Miura-Pl\"ucker $(G,q)$-opers.

We start in Section \ref{rank2} with an outline of the Pl\"ucker
description of $B_+$-bundles.  When $G$ has rank greater than $1$, we
use this to associate to a Miura $(G,q)$-oper a collection of Miura
$(\GL(2),q)$-opers indexed by the fundamental weights of $G$. This
will motivate the definition of $Z$-twisted Miura-Pl\"ucker
$(G,q)$-opers in Section \ref{MP}. We will then define two
nondegeneracy conditions for these objects: the first one in Section
\ref{H nondeg}, and the second one in Sections \ref{nondeg sl2} (for
$G=\SL(2)$) and \ref{s:nondegenerate} (for general $G$).

\subsection{The associated Miura $(\GL(2),q)$-opers}    \label{rank2}

In this section, we associate to a Miura $(G,q)$-oper with regular
singularities a collection of Miura $(\GL(2),q)$-opers indexed by the
fundamental weights. This is done using the Pl\"ucker description of
$B_+$-bundles which we learned from V. Drinfeld.

Recall that $\omega_i$ denotes the $i$th fundamental weight of $G$.
Let $V_i$ be the irreducible representation of $G$ with highest weight
$\omega_i$ with respect to $B_+$. It comes equipped with a line $L_i
\subset V_i$ (of highest weight vectors) stable under the action of
$B_+$. Likewise, there is a $B_+$-stable line $L_\la$ in the
irreducible representations $V_\la$ of $G$ for every dominant integral
highest weight $\la$. These lines satisfy the following generalized
{\em Pl\"ucker relations}: for any two dominant integral weights $\la$
and $\mu$, $L_{\la+\mu} \subset V_{\la+\mu}$ is the image of $L_\la
\otimes L_\mu \subset V_\la \otimes V_\mu$ under the canonical
projection $V_\la \otimes V_\mu \to V_{\la+\mu}$. Conversely, given a
collection of lines $L_\la \subset V_\la$ for all $\la$ satisfying the
Pl\"ucker relations, there is a Borel subgroup $B \subset G$ such that
$L_\la$ is stabilized by $B$ for all $\la$. A choice of $B$ is
equivalent to a choice of $B_+$-torsor in $G$. Hence, we can identify
the datum of a $B_+$-reduction $\cF_{B_+}$ of a $G$-bundle $\cF_G$
with the ``linear algebra'' data of line subbundles $\cL_i$ of the
associated vector bundles $\cV_\la = \cF \underset{G}\times V_\la$
satisfying the Pl\"ucker relations. If we have a connection or a
$q$-connection on $\cF_G$ that preserves $\cF_{B_+}$ (or has
another relation with $\cF_{B_+}$, such as the oper or $q$-oper
condition), we can use this formalism to express the properties of
$A(z)$ in terms of these ``linear algebra'' data.

In our discussion here, we will not make full use of this
formalism. What we need is the following simple fact. Let
$\nu_{\omega_i}$ be a generator of the line $L_i \subset V_i$. It is a
vector of weight $\omega_i$ with respect to our maximal torus $H
\subset B_+$. The subspace of $V_i$ of weight $\omega_i-\alpha_i$
is one-dimensional and is spanned by $f_i \cdot
\nu_{\omega_i}$. Therefore, the two-dimensional subspace $W_i$ of
$V_i$ spanned by the weight vectors $\nu_{\omega_i}$ and $f_i \cdot
\nu_{\omega_i}$ is a $B_+$-invariant subspace of $V_i$.

Now, let $(\cF_G,A,\cF_{B_-},\cF_{B_+})$ be a Miura $(G,q)$-oper
with regular singularities determined by polynomials $\{ \Lambda_i(z)
\}_{i=1,\ldots,r}$ (see Definition \ref{MiuraRS}). Recall that
$\cF_{B_+}$ is a $B_+$-reduction of a $G$-bundle $\cF_G$ on $\P^1$
preserved by the $(G,q)$-connection $A$. Therefore for each
$i=1,\ldots,r$, the vector bundle
$$
\cV_i = \cF_{B_+} \underset{B_+}\times V_i = \cF_G \underset{G}\times
V_i
$$
associated to $V_i$ contains a rank two
subbundle
$$
\cW_i = \cF_{B_+} \underset{B_+}\times W_i
$$
associated to $W_i \subset V_i$, and $\cW_i$ in
turn contains a line subbundle
$$
\cL_i = \cF_{B_+} \underset{B_+}\times L_i
$$
associated to $L_i \subset W_i$.

Denote by $\phi_i(A)$ the $q$-connection on the vector bundle $\cV_i$
(or equivalently, a $(\GL(V_i),q)$-connection) corresponding to the
above Miura $q$-oper connection $A$. Since $A$ preserves $\cF_{B_+}$
(see Definition \ref{Miura}), we see that $\phi_i(A)$ preserves the
subbundles $\cL_i$ and $\cW_i$ of $\cV_i$. Denote by $A_i$ the
corresponding $q$-connection on the rank 2 bundle $\cW_i$.

Let us trivialize $\cF_{B_+}$ on a Zariski open subset of $\P^1$ so
that $A(z)$ has the form \eqref{form of A} with respect to this
trivialization (see Corollary \ref{Miura form}). This trivializes the
bundles $\cV_i$, $\cW_i$, and $\cL_i$, so that the $q$-connection
$A_i(z)$ becomes a $2 \times 2$ matrix whose entries are in $\C(z)$.

Direct computation using formula \eqref{form of A} yields the
following result.

\begin{Lem}    \label{2flagthm}
We have
\begin{equation}    \label{2flagformula}
A_i(z)=\begin{pmatrix}
  g_i(z) &  &\Lambda_i(z) \prod_{j>i} g_j(z)^{-a_{ji}}\\
&&\\  
  0 & &g^{-1}_i(z) \prod_{j\neq i} g_j(z)^{-a_{ji}}
 \end{pmatrix},
\end{equation}
where we use the ordering of the simple roots determined by the
Coxeter element $c$.
\end{Lem}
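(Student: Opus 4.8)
The plan is to exploit the explicit form of the $q$-connection from Corollary \ref{Miura form} together with the very simple way in which the Chevalley generators act on the two-dimensional space $W_i$. After trivializing $\cF_{B_+}$ we may assume $A(z)=\prod_k g_k(z)^{\check\alpha_k}\,e^{\frac{\Lambda_k(z)}{g_k(z)}e_k}$, the product taken in the Coxeter order. Since each factor lies in $B_+(z)$ and $W_i=\Span\{\nu_{\omega_i},\,f_i\cdot\nu_{\omega_i}\}$ is $B_+$-invariant, the torus preserves $W_i$ (being a sum of two weight spaces) and each $e^{c\,e_k}$ preserves it as well; hence the whole product preserves $W_i$ and its restriction to $W_i$ is the ordered product of the restrictions of the individual factors. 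So the computation of $A_i(z)$ reduces to restricting each elementary factor to $W_i$ and multiplying $2\times 2$ matrices.

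First I would record the action of the individual factors. Writing $v=\nu_{\omega_i}$ (weight $\omega_i$) and $w=f_i\cdot v$ (weight $\omega_i-\alpha_i$), the torus factor $g_k^{\check\alpha_k}$ acts diagonally, with eigenvalue $g_k^{\delta_{ik}}$ on $v$ and $g_k^{\delta_{ik}-a_{ki}}$ on $w$, using $\langle\omega_i,\check\alpha_k\rangle=\delta_{ik}$ and $\langle\alpha_i,\check\alpha_k\rangle=a_{ki}$. The decisive simplification lies in the unipotent factors: one has $e_k v=0$ for all $k$, and $e_k w=[e_k,f_i]v=\delta_{ik}\,\check\alpha_i v=\delta_{ik}\,v$, so $e_k$ restricts to the zero operator on $W_i$ whenever $k\neq i$, while $e_i$ restricts to the nilpotent sending $w\mapsto v$, $v\mapsto 0$. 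Consequently $e^{\frac{\Lambda_k}{g_k}e_k}|_{W_i}$ is the identity for every $k\neq i$, and only the single factor $e^{\frac{\Lambda_i}{g_i}e_i}|_{W_i}$, with off-diagonal entry $\Lambda_i/g_i$, is nontrivial.

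With these restrictions in hand the remaining work is bookkeeping of the Coxeter order. Because only the $i$-th unipotent factor is nontrivial on $W_i$, the off-diagonal entry is produced exactly once, at the slot occupied by $e_i$. The paired torus factor $g_i^{\check\alpha_i}$ there restricts to $\mathrm{diag}(g_i,g_i^{-1})$, whose $g_i$ on the top row cancels the $1/g_i$ and leaves $\Lambda_i$ in the off-diagonal position. The diagonal factors to the left of this slot (indices $j<i$ in the order) restrict to $\mathrm{diag}(1,g_j^{-a_{ji}})$ and, acting on the left, rescale only the lower-right entry, contributing $\prod_{j<i}g_j^{-a_{ji}}$ there; the factors to the right (indices $j>i$) act on the right, rescaling the second column and hence both the off-diagonal and lower-right entries, contributing $\prod_{j>i}g_j^{-a_{ji}}$. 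Collecting everything yields $(1,1)$-entry $g_i$, off-diagonal entry $\Lambda_i\prod_{j>i}g_j^{-a_{ji}}$, and $(2,2)$-entry $g_i^{-1}\prod_{j\neq i}g_j^{-a_{ji}}$, exactly as claimed.

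The main obstacle, though a matter of care rather than of depth, is precisely this asymmetry in the ordering: the lower-right entry collects the factors $g_j^{-a_{ji}}$ over all $j\neq i$, whereas the off-diagonal entry collects them only over $j>i$. This reflects the fact that the diagonal factors lying to the right of the unique nontrivial unipotent slot multiply the already-formed off-diagonal entry, while those to its left do not. Keeping track of which diagonal factors sit on which side of the $e_i$-slot is the only place where the Coxeter ordering enters the final formula, and it is the one step I would verify with explicit care.
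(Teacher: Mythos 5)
Your computation is correct and is exactly the ``direct computation using formula \eqref{form of A}'' that the paper invokes without writing out: restricting each factor of \eqref{form of A} to $W_i$, noting that $e_k|_{W_i}=0$ for $k\neq i$ so only the $i$-th unipotent slot contributes, and tracking which diagonal factors sit to the left versus the right of that slot. The sign and index conventions (in particular $\langle\alpha_i,\check\alpha_k\rangle=a_{ki}$ under the paper's convention $a_{ij}=\langle\alpha_j,\check\alpha_i\rangle$, and the left/right asymmetry producing $\prod_{j>i}$ in the $(1,2)$-entry but $\prod_{j\neq i}$ in the $(2,2)$-entry) all check out.
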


Using the trivialization of $\cW_i$ in which $A_i(z)$ has the form
\eqref{2flagformula}, we represent $\cW_i$ as the direct sum of two
line subbundles. The first is $\cL_i$, generated by the basis vector
$\begin{pmatrix} 1 \\ 0 \end{pmatrix}$. The second, which we denote by
$\wt\cL_i$, is generated by the basis vector $\begin{pmatrix} 0 \\
  1 \end{pmatrix}$. The subbundle $\cL_i$ is $A_i$-invariant, whereas
the subbundle $\wt\cL_i$ and $A_i$ satisfy the following
$(\GL(2),q)$-oper condition.

\begin{Def}    \label{GL2}
  A \emph{$(\GL(2),q)$-oper} on $\P^1$ is a triple $(\cW,A,\wt\cL)$,
  where $\cW$ is a rank 2 bundle on $\P^1$, $A: \cW \to \cW^q$ is a
  meromorphic $q$-connection on $\cW$, and $\wt\cL$ is a line
  subbundle of $\cW$ such that the induced map $\bar{A}:\wt\cL \to
  (\cW/\wt\cL)^q$ is an isomorphism on a Zariski open dense subset of
  $\P^1$.

  A Miura \emph{$(\GL(2),q)$-oper} on $\P^1$ is a quadruple
  $(\cW,A,\wt\cL,\cL)$, where $(\cW,A,\wt\cL)$ is a $(\GL(2),q)$-oper
  and $\cL$ is an $A$-invariant line subbundle of $\cW$.
\end{Def}

Using this definition, one obtains an alternative definition of
(Miura) $(\SL(2),q)$-opers: these are the (Miura) $(\GL(2),q)$-opers
defined by the above triples (resp. quadruples) satisfying the
additional property that there exists an isomorphism between $\on{Det}
\cW$ and the trivial line bundle on a Zariski open dense subset of
$\P^1$ and under this isomorphism $\on{det}(A)$ is equal to the
identity.

Our quadruple $(\cW_i,A,\wt\cL_i,\cL_i)$ is clearly a Miura
$(\GL(2),q)$-oper. It is not clear whether it is an $(\SL(2),q)$-oper
because $\on{det} A_i(z)$ is not necessarily equal to 1 in the above
trivialization of $\cW_i$.

We now make a further assumption that our initial Miura $(G,q)$-oper
$(\cF_G,A,\cF_{B_-}$, $\cF_{B_+})$ with regular singularities is such
that the associated Cartan connection $A^H(z)$ has the form
\eqref{AH1}:
\begin{equation}    \label{AH2}
A^H(z)=\prod_i
y_i(qz)^{\check{\alpha}_i} \; Z \; \prod_i y_i(z)^{-\check{\alpha}_i},
\qquad y_i(z) \in \C(z).
\end{equation}
We claim that if this condition is satisfied, then there exists
another trivialization of $\cW_i$ in which the $q$-connection $A_i$
has a constant determinant (albeit not equal to 1, in general). In
other words, we can apply to $A_i(z)$ given by formula
\eqref{2flagformula} a $q$-gauge transformation so that the resulting
$q$-connection has constant determinant. This provides a particularly
convenient gauge for $A_i$.

To see this, let us apply to $A_i(z)$ the $q$-gauge transformation by
the diagonal matrix
$$
 u(z)= \begin{pmatrix} 1 & 0 \\ 0 & \prod_{j\neq i} y_j(z)^{a_{ji}}
\end{pmatrix}.
 $$
This gives
 \begin{equation}\label{cano}
\wt{A}_i(z) = u(qz)A_i(z)u^{-1}(z)=\begin{pmatrix}
   \zeta_i\frac{y_i(qz)}{y_i(z)} &  &\rho_i(z)\\
   0 & &\prod_{j\ne i} \zeta_j^{-a_{ji}} \zeta^{-1}_i\frac{y_i(z)}{y_i(qz)} 
  \end{pmatrix},
 \end{equation}
 where
 \begin{equation}\label{ri}
 \rho_i(z)=\Lambda_i(z)\prod_{j> i} (\zeta_j
 y_j(qz))^{-a_{ji}}\prod_{j<i}y_j(z)^{-a_{ji}}.
 \end{equation}
 Since $a_{ij} \leq 0$ for $i \neq j$, $\rho_i(z)$
 is a polynomial if all $y_j(z), j=1,\dots,r$, are polynomials.

Let $G_i\cong \SL(2)$ be the subgroup of $G$ corresponding to the
$\mathfrak{sl}(2)$-triple spanned by $\{e_i, f_i, \check{\alpha}_i\}$.
Note that the group $G_i$ preserves $W_i$. Consider the Miura
$(G_i,q)$-oper $(\cW_i,{\mc A}_i,$ $\wt\cL_i,\cL_i)$ with $\wt\cL_i =
\on{span} \left\{ \begin{pmatrix} 0 \\ 1 \end{pmatrix} \right\}$,
$\cL_i = \on{span} \left\{ \begin{pmatrix} 1 \\ 0 \end{pmatrix}
\right\}$, and
\begin{equation}    \label{Bi}
{\mc A}_i(z) = g_i^{\check \alpha_i}(z) \; e^{\frac{\rho_i(z)}{g_i(z)}e_i} =
\begin{pmatrix}
   \zeta_i\frac{y_i(qz)}{y_i(z)} &  &\rho_i(z)\\
   0 & &\zeta^{-1}_i\frac{y_i(z)}{y_i(qz)} 
  \end{pmatrix}.
\end{equation}
We can now express our $q$-connection $\wt{A}_i(z)$ given by formula
\eqref{cano} as follows:
\begin{align}    \label{tilde calig}
\wt{A}_i(z) &= \begin{pmatrix} 1 & 0 \\ 0 & \prod_{j\ne i}
  \zeta_j^{-a_{ji}}
\end{pmatrix} {\mc A}_i(z) \\    \label{Ai}
&= \begin{pmatrix} 1 & 0 \\ 0 & \prod_{j\ne i}
  \zeta_j^{-a_{ji}}
\end{pmatrix} g_i^{\check \alpha_i}(z) \;
e^{\frac{\rho_i(z)}{g_i(z)}e_i}.
\end{align}
This shows that $\on{det}(\wt{A}_i(z))$ is constant; namely, it is
equal to $\prod_{j\ne i} \zeta_j^{-a_{ji}}$.

Thus, under the assumption \eqref{AH2}, our Miura $(G,q)$-oper $A(z)$
gives rise to a collection of meromorphic Miura $(\SL(2), q)$-opers
${\mc A}_i(z)$ for $i=1,\ldots,r$. It should be noted that it has
regular singularities in the sense of Definition~\ref{d:regsing} if
and only if $\rho_i(z)$ is a polynomial. For example, this holds for
all $i$ if all $y_j(z), j=1,\dots,r$, are polynomials, an observation
we will use below.

\subsection{$Z$-twisted Miura-Pl\"ucker $q$-opers}    \label{MP}

Recall Definition \ref{ZtwMiura} of $Z$-twisted Miura $(G,q)$-opers,
where $Z$ is a regular semisimple element of the maximal torus
$H$. These are Miura $(G,q)$-opers whose underlying $q$-connection can
be written in the form \eqref{gaugeA}:
\begin{equation}    \label{gaugeA2}
A(z)=v(qz)Z v(z)^{-1}, \qquad v(z) \in B_+(z).
\end{equation}
We will now relax this condition using the Miura $(\GL(2),q)$-opers
$A_i(z)$ (or equivalently, the Miura $(\SL(2),q)$-opers ${\mc
  A}_i(z)$) associated to $A(z)$. Instead, we will require that there
exists an element $v(z)$ from $B_+(z)$ such that $A_i(z)$ satisfies
formula \eqref{gaugeA2} with $v(z)$ replaced by $v_i(z) = v(z)|_{W_i}
\in \GL(2)$ and $Z$ replaced by $Z|_{W_i}$ for all $i=1,\ldots,r$.

\begin{Def}    \label{ZtwMP}
  A $Z$-{\em twisted Miura-Pl\"ucker $(G,q)$-oper} is a meromorphic
  Miura $(G,q)$-oper on $\P^1$ with underlying $q$-connection $A(z)$
  satisfying the following condition: there exists $v(z) \in B_+(z)$
  such that for all $i=1,\ldots,r$, the Miura $(\GL(2),q)$-opers
  $A_i(z)$ associated to $A(z)$ by formula \eqref{2flagformula} can be
  written in the form
\begin{equation}    \label{gaugeA3}
A_i(z) = v(zq) Z v(z)^{-1}|_{W_i} = v_i(zq)Z_iv_i(z)^{-1},
\end{equation}
where $v_i(z) = v(z)|_{W_i}$ and $Z_i = Z|_{W_i}$.
\end{Def}

\medskip

The difference between $Z$-twisted Miura $(G,q)$-opers and $Z$-twisted
Miura-Pl\"ucker $(G,q)$-opers may be explained as follows: the former
is a quadruple $(\cF_G,A,$ $\cF_{B_-},\cF_{B_+})$ as in Definition
\ref{Miura} such that there exists a trivialization of $\cF_{B_+}$
with respect to which the $q$-connection $A$ is a constant element of
$G(z)$ equal to our element $Z \in H$. For the latter, we only ask
that there exists a trivialization of $\cF_{B_+}$ with respect to
which the $q$-connections $A_i(z)$ are constant elements of
$\GL(2)(z)$ equal to $Z_i$ for all $i=1,\ldots,r$.

Thus, every $Z$-twisted Miura $(G,q)$-oper is automatically a
$Z$-twisted Miura-Pl\"ucker $(G,q)$-oper, but the converse is not
necessarily true if $G \neq SL(2)$.

Note, however, that it follows from the above definition that the
$(H,q)$-connection $A^H(z)$ associated to a $Z$-twisted
Miura-Pl\"ucker $(G,q)$-oper can be written in the same form
\eqref{AH2} as the $(H,q)$-connection associated to a $Z$-twisted
Miura $(G,q)$-oper.

\subsection{$H$-nondegeneracy condition}    \label{H nondeg}

We now introduce two nondegeneracy conditions for $Z$-twisted
Miura-Pl\"ucker $q$-opers. The first of them, called the
$H$-nondegeneracy condition, is applicable to arbitrary Miura
$q$-opers with regular singularities. Recall from Corollary \ref{Miura
  form} that the underlying $q$-connection can be represented in the
form \eqref{form of A}.

In what follows, we will say that $v, w \in \mathbb{C}^\times$ are
\emph{$q$-distinct} if $q^\Z v\cap q^\Z w=\varnothing$.

\begin{Def}    \label{nondeg Cartan}
  A Miura $(G,q)$-oper $A(z)$ of the form \eqref{form of A} is called
  $H$-\emph{nondegene\-rate} if the corresponding $(H,q)$-connection
  $A^H(z)$ can be written in the form \eqref{AH1}, where for all
  $i,j,k$ with $i\ne j$ and $a_{ik} \neq 0, a_{jk} \neq
    0$, the zeros and poles of $y_i(z)$ and $y_j(z)$ are
  $q$-distinct from each other and from the zeros of
  $\Lambda_k(z)$.
\end{Def}

\subsection{Nondegenerate $Z$-twisted Miura
  $(\SL(2),q)$-opers}    \label{nondeg sl2}

Next, we define the second nondegeneracy condition. This condition
applies to $Z$-twisted Miura-Pl\"ucker $(G,q)$-opers. In this
subsection, we give the definition for $G=\SL(2)$. (Note that
$Z$-twisted Miura-Pl\"ucker $(\SL(2),q)$-opers are the same as
$Z$-twisted Miura $(\SL(2),q)$-opers.) In the next subsection, we will
give it in the case of an arbitrary simply connected simple complex
Lie group $G$.

Consider a Miura $(\SL(2),q)$-oper given by formula \eqref{form of A},
which reads in this case:
$$
A(z)=g(z)^{\check{\alpha}} \; \on{exp}\left( \frac{\Lambda(z)}{g(z)}e
\right) = \begin{pmatrix}
   g(z) & \Lambda(z) \\
   0 & g(z)^{-1}
  \end{pmatrix}.
$$
According to formula \eqref{AH2}, the corresponding Cartan
$q$-connection $A^H(z)$ is equal to
$$
A^H(z)=g(z)^{\check{\alpha}} = y(qz)^{\check{\alpha}}Z
y(z)^{-\check{\alpha}} = \begin{pmatrix}
   y(qz)y(z)^{-1} & 0 \\
   0 & y(qz)^{-1}y(z)
  \end{pmatrix},
$$
where $y(z)$ is a rational function. Let us assume that $A(z)$ is
$H$-nondegenerate (see Definition \ref{nondeg Cartan}). This means
that the zeros of $\Lambda(z)$ are $q$-distinct from the zeros and
poles of $y(z)$.

If we apply a $q$-gauge transformation by an element of
$h(z)^{\check\alpha} \in H[z]$ to $A(z)$, we obtain a new $q$-oper
connection
\begin{equation}    \label{wtA}
\wt{A}(z) = \wt{g}(z)^{\check{\alpha}} \; \on{exp}\left(
  \frac{\wt\Lambda(z)}{\wt{g}(z)}e \right),
\end{equation}
where
\begin{equation}    \label{wtg}
\wt{g}(z) = g(z) h(zq) h(z)^{-1}, \qquad \wt\Lambda(z) =
\Lambda(z) h(zq) h(z).
\end{equation}
It also has regular singularities, but for a
different polynomial $\wt{\Lambda}(z)$, and $\wt{A}(z)$ may no longer
be $H$-nondegenerate.  However, it turns out there is an essentially
unique gauge transformation from $H[z]$ for which the resulting
$\wt{A}(z)$ is $H$-nondegenerate  and $\wt{y}(z)$
is a polynomial.  This choice allows us to fix the polynomial
$\Lambda(z)$ determining the regular singularities of our
$(\SL(2),q)$-oper.

\begin{Lem}    \label{nondegsl2}
\begin{enumerate}
\item There is an $H$-nondegenerate $(\SL(2),q)$-oper $\wt{A}(z)$ in
  the $H[z]$-gauge class of $A(z)$, say with
  $\wt{A}^H(z)=\wt{g}(z)^{\check{\alpha}}$, for which the rational
  function $\wt{y}(z)$ is a polynomial. This oper is unique up to a
  scalar $a \in \C^\times$ that leaves $\wt{g}(z)$ unchanged, but
  multiplies $\wt{y}(z)$ and $\wt{\Lambda}(z)$ by $a$ and $a^2$
  respectively.
\item This $(\SL(2),q)$-oper $\wt{A}(z)$ may also be characterized by
  the property that $\wt{\Lambda}(z)$ has maximal degree subject to
  the constraint that it is $H$-nondegenerate.
\end{enumerate}
\end{Lem}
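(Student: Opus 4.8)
The plan is to study how the $H[z]$-gauge transformations \eqref{wtg} act on the pair $(\Lambda(z),y(z))$ and to single out a canonical representative. First I would record that under $h(z)^{\check\alpha}$ the pair transforms by $y\mapsto yh$ and $\Lambda\mapsto \Lambda\,h(qz)h(z)$, so the rational function $R(z):=\Lambda(z)/\bigl(y(qz)y(z)\bigr)$ is an invariant of the whole gauge class. Writing the twisting datum as $v(z)=y(z)^{\check\alpha}e^{m(z)e}$ (the $\SL(2)$ case of \eqref{vz}) and expanding $A(z)=v(qz)Zv(z)^{-1}$, a direct computation gives $R(z)=\zeta^{-1}m(qz)-\zeta m(z)$; thus $m$, equivalently the reduced ratio $N/D:=-m$ with $N,D\in\C[z]$ coprime, is gauge invariant. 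The natural candidate for $\wt{A}(z)$ is the representative with $\wt{y}=D$ and $\wt\Lambda=\zeta D(qz)N(z)-\zeta^{-1}D(z)N(qz)$: this $\wt y$ is manifestly a polynomial, $\wt\Lambda$ is automatically a polynomial, and every other polynomial choice is $\wt y=Dk$ with $k\in\C[z]$.

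Step two is uniqueness. If $\wt y=Dk$ with $k$ nonconstant, then $k$ has a root $c$, which is then a common root of $\wt y=Dk$ and of $\wt\Lambda=k(qz)k(z)\bigl(\zeta D(qz)N(z)-\zeta^{-1}D(z)N(qz)\bigr)$ (through the factor $k(z)$). Since a point is never $q$-distinct from itself, this violates the $H$-nondegeneracy of Definition \ref{nondeg Cartan}. Hence $k\in\C^\times$, and the residual freedom $k=a$ rescales $\wt y\mapsto a\wt y$ and $\wt\Lambda\mapsto a^2\wt\Lambda$ while leaving $\wt g(z)=\zeta\,\wt y(qz)/\wt y(z)$ unchanged, exactly as stated.

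Step three, establishing that the candidate is $H$-nondegenerate, is the heart of the matter. Here I would localize at each $q$-orbit in $\C^\times$ (treating $z=0,\infty$ separately), since zeros and poles of $y$, $\Lambda$ and $h$ interact only within a single orbit of $z\mapsto qz$. The $H$-nondegeneracy of the given $A$ forces, in each orbit, the dichotomy: either $\Lambda$ has no zeros there, or $y$ has neither zeros nor poles there. In orbits of the first type the candidate inherits the absence of zeros of $\wt\Lambda$ directly; the real content is that passing to the reduced denominator $D$ in orbits of the second type — which clears the poles of the original $y_0$ — does not produce a zero of $\wt\Lambda$ in an orbit containing a root of $D$. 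Concretely, at a root $b$ of $D$ (a pole of $m$) one finds $\wt\Lambda(b)=\zeta D(qb)N(b)$, and I would use coprimality of $N,D$ together with the orbit dichotomy for the given $A$ — which prevents two poles of $m$ in one orbit — to conclude $\wt\Lambda(b)\neq0$, then run the same computation across the orbit of $b$. This is the step I expect to be the main obstacle, precisely because it is where the hypothesis that $A$ is already $H$-nondegenerate (rather than an arbitrary $Z$-twisted Miura $(\SL(2),q)$-oper) is indispensable: without it the reduction genuinely fails, as one already sees for a $y$ with two roots differing by a factor of $q$.

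Finally, for part (2) I would compare degrees across $H$-nondegenerate representatives. Any representative has $\wt y=Dk$ with $\deg\wt\Lambda=\deg\wt\Lambda_{\mathrm{can}}+2\deg k$, where $\deg k=\deg(\mathrm{num}\,k)-\deg(\mathrm{denom}\,k)$ and $\wt\Lambda_{\mathrm{can}}$ is the candidate numerator of Step one. As in Step two, $H$-nondegeneracy forces $k$ to have no zeros, so $k=c/r$ with $r\in\C[z]$ and hence $\deg k\le 0$, with equality if and only if $k$ is constant. Therefore $\deg\wt\Lambda$ is maximized exactly by the polynomial-$y$ representative constructed above, which simultaneously yields the maximal-degree characterization in (2) and reconfirms uniqueness up to the scalar $a$.
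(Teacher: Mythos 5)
Your Steps two and four are, in substance, the paper's entire proof: write $y=P_1/P_2$ in lowest terms; a polynomial representative $\wt{y}=hP_1/P_2$ forces $P_2\mid h$; any nonconstant factor $k=h/P_2$ puts a common zero into $\wt{y}=kP_1$ and $\wt{\Lambda}=h(qz)h(z)\Lambda(z)$, contradicting $H$-nondegeneracy; hence $h=aP_2$, which fixes $\wt{g}$ and rescales $\wt{y},\wt{\Lambda}$ by $a,a^2$. For part (2), the same divisibility $h\mid P_2$ bounds $\deg\wt{\Lambda}=\deg\Lambda+2\deg h$ from above, with equality exactly at $h=aP_2$. So your uniqueness and maximal-degree arguments are correct and coincide with the paper's.

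There are, however, two concrete problems. First, your canonical candidate is $\wt{y}=D$, the denominator of the gauge-invariant $-m=Q_-/Q_+$, whereas the paper's is $\wt{y}=P_1$, the numerator of $y$ itself. The polynomial representatives in the $H[z]$-gauge class are tautologically $\{kP_1 : k\in\C[z]\}$; your claim that they are $\{kD\}$ presupposes $P_1\sim D=Q_+$, which in the paper is a \emph{consequence} of this lemma (it is exactly how the identification $y=Q_+$ is derived en route to Theorem \ref{isom sl2}), so taking it as the starting point is circular. The candidate should simply be $P_1$, with $\wt{\Lambda}=P_2(qz)P_2(z)\Lambda(z)$. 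Second, your Step three --- verifying that this candidate is actually $H$-nondegenerate --- is left as a sketch, and you correctly flag it as the delicate point: one must check that the zeros of $P_2(qz)P_2(z)\Lambda(z)$ are $q$-distinct from those of $P_1$, i.e.\ that the zeros and poles of the original $y$ lie in disjoint $q$-orbits, which is how Definition \ref{nondeg Cartan} has to be read (it is not literally implied by ``zeros of $\Lambda$ are $q$-distinct from zeros and poles of $y$''). To be fair, the paper's own proof omits this existence verification entirely, so you are not missing anything the paper supplies; but as a self-contained argument your proposal is incomplete at exactly this step, and the orbit-by-orbit computation you outline (evaluating $\wt{\Lambda}$ at roots of $D$) is again phrased in terms of $N,D$ rather than $P_1,P_2$ and so inherits the circularity of Step one.
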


\begin{proof} Write $y(z)=\frac{P_1(z)}{P_2(z)}$, where $P_1,P_2$ are
  relatively prime polynomials. For a nonzero polynomial $h(z) \in
  \C(z)^\times$, the gauge transformation of $A(z)$ by
  $h(z)^{\check{\alpha}}$ is given by formulas \eqref{wtA} and
  \eqref{wtg}. Thus, in order for
  $\wt{y}(z)=h(z)\frac{P_1(z)}{P_2(z)}$ to be a polynomial, we need
  $h(z)$ to be divisible by $P_2(z)$. If, however, $\deg(h/P_2)>0$,
  then $\wt{y}(z)$ and $\wt{\Lambda}(z)$ would have a zero in common,
  so $\wt{A}(z)$ would not by $H$-nondegenerate.  Hence, we must have
  $h(z)=aP_2(z)$ for some $a\in\C^\times$. Thus, $h(z)$ is uniquely
  defined by multiplication by $a$, which leave $\wt{g}(z)$ unchanged,
  but multiplies $\wt{y}(z)$ and $\wt{\Lambda}(z)$ by $a$ and $a^2$
  respectively.

  For the second statement, note that if $h(z)$ is a polynomial for
  which the zeros of $h(z)h(qz)\Lambda(z)$ are $q$-distinct from the
  zeros and poles of $h(z)\frac{P_1(z)}{P_2(z)}$, we must have
  $h|P_2$.  If $h(z)$ is not an associate of $P_2(z)$, we have
  $\deg(h)<\deg(P_2)$, so
  $\deg(h(z)h(qz)\Lambda(z))<\deg(\wt{\Lambda})$.
\end{proof}

This motivates the following definition.

\begin{Def}    \label{ngsl2}
  A $Z$-twisted Miura $(\SL(2),q)$-oper is called \emph{nondegenerate}
  if it is $H$-nondegenerate and the rational function $y(z)$
  appearing in formula \eqref{AH1} is a polynomial.
\end{Def}

\subsection{Nondegenerate  Miura-Pl\"ucker
  $(G,q)$-opers}    \label{s:nondegenerate}

We now turn to the general case. Recall Definition \ref{ZtwMP} of
$Z$-twisted Miura-Pl\"ucker $(G,q)$-opers. Further recall that to every
Miura $(G,q)$-oper $A(z)$, we have associated a Miura $(\SL(2),q)$-oper
${\mc A}_i(z), i=1,\ldots,r$, given by formula \eqref{Bi}. (It can be
obtained from the Miura $(\GL(2),q)$-oper $A_i(z) = A(z)|_{W_i}$ using
formulas \eqref{cano} and \eqref{tilde calig}). It follows from the
definition that if $A(z)$ is $Z$-twisted  with $Z$ given by \eqref{Z},
then ${\mc A}_i(z)$ is $\check\alpha_i(\zeta_i)$-twisted.

\begin{Def}    \label{nondeg Miura}
  Suppose that the rank of $G$ is greater than 1. A $Z$-twisted
  Miura-Pl\"ucker $(G,q)$-oper $A(z)$ is called \emph{nondegenerate} if
  it is $H$-nondegenerate and
  each $\check\alpha_i(\zeta_i)$-twisted Miura
  $(\SL(2),q)$-oper ${\mc A}_i(z)$ is nondegenerate.
\end{Def}

It turns out that this simply means that in addition to $A(z)$ being
$H$-nondegenerate, each $y_i(z)$ from formula \eqref{AH1} is a polynomial
satisfying a mild condition on its roots.

\begin{Prop}    \label{nondeg1}
  Suppose that the rank of $G$ is greater than 1, and let $A(z)$ be a
  $Z$-twisted Miura-Pl\"ucker $(G,q)$-oper.
  The following statements are equivalent:
\begin{enumerate}
\item\label{nondegen1} $A(z)$ is nondegenerate.
  \item\label{nondegen2} $A(z)$ is $H$-nondegenerate, and each
    ${\mc A}_i(z)$ has regular singularities, i.e. $\rho_i(z)$ given
    by formula \eqref{ri} is in $\C[z]$.
    \item\label{nondegen3} Each $y_i(z)$ from formula \eqref{AH1} may
      be chosen to be a monic
      polynomial, and for all $i,j,k$ with $i\ne
      j$ and $a_{ik} \neq 0, a_{jk} \neq
    0$, the zeros of $y_i(z)$ and $y_j(z)$ are
  $q$-distinct from each other and from the zeros of
  $\Lambda_k(z)$.
\end{enumerate}
\end{Prop}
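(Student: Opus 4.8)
The plan is to unwind the three conditions into explicit statements about the rational functions $y_i(z)$ appearing in \eqref{AH1} and the functions $\rho_i(z)$ of \eqref{ri}, and then to prove the cycle $(1)\Rightarrow(2)\Rightarrow(3)\Rightarrow(1)$. Recall from Section \ref{Hqconn} that $A^H(z)$ determines the $y_i(z)$ only up to multiplication by nonzero scalars, so the phrase ``$y_i$ may be chosen to be a monic polynomial'' in $(3)$ simply means that $y_i$ has no poles. Recall also that, by Definition \ref{ngsl2}, the $\check\alpha_i(\zeta_i)$-twisted Miura $(\SL(2),q)$-oper $\mc A_i(z)$ of \eqref{Bi} is nondegenerate precisely when $y_i(z)$ is a polynomial and $\mc A_i(z)$ is $H$-nondegenerate, the latter meaning that the zeros of its singularity function $\rho_i(z)$ are $q$-distinct from the zeros and poles of $y_i(z)$.

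The implication $(1)\Rightarrow(2)$ is then immediate: nondegeneracy of every $\mc A_i(z)$ forces each $y_i(z)$ to be a polynomial, whence by \eqref{ri} and the inequalities $a_{ji}\le 0$ for $j\ne i$ every $\rho_i(z)$ is a polynomial as well; together with the assumed $H$-nondegeneracy of $A(z)$ this is exactly $(2)$. For the closing implication $(3)\Rightarrow(1)$, suppose the $y_i$ are monic polynomials satisfying the stated $q$-distinctness of zeros. Then $A(z)$ is $H$-nondegenerate, because the $y_i$ having no poles makes the pole conditions of Definition \ref{nondeg Cartan} vacuous while the zero conditions are precisely those of $(3)$. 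Moreover each $\rho_i(z)$ is a polynomial, its zeros arising only from $\Lambda_i(z)$ and from the $y_j(z)$ with $j$ adjacent to $i$; applying the $q$-distinctness of $(3)$ with the $\Lambda$-index equal to $i$ and using a neighbor $j$ of $i$ (which exists since the Dynkin diagram is connected of rank greater than $1$) shows these zeros are $q$-distinct from the zeros of $y_i$. Hence each $\mc A_i$ is $H$-nondegenerate and, $y_i$ being a polynomial, nondegenerate, which is $(1)$.

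The crux is $(2)\Rightarrow(3)$, where from $\rho_i(z)\in\C[z]$ for all $i$ together with $H$-nondegeneracy one must recover that no $y_j(z)$ has a pole. I would argue by contradiction: if $y_{j_0}(z)$ had a pole at some $z_0$, choose an index $i_0$ adjacent to $j_0$. In \eqref{ri} the factor $y_{j_0}$ enters $\rho_{i_0}$ with the strictly positive exponent $-a_{j_0 i_0}$, as $y_{j_0}(qz)^{-a_{j_0 i_0}}$ or $y_{j_0}(z)^{-a_{j_0 i_0}}$ according to whether $j_0>i_0$ or $j_0<i_0$ in the Coxeter order, producing a pole of $\rho_{i_0}$ at $q^{-1}z_0$ or at $z_0$. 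The main work is to show this pole cannot cancel. The factor $\Lambda_{i_0}(z)$ cannot vanish there, since $H$-nondegeneracy applied to the triple $(i_0,j_0,i_0)$ (note $a_{i_0 i_0}=2\neq 0$) makes the zeros of $\Lambda_{i_0}$ $q$-distinct from the poles of $y_{j_0}$; and no other factor $y_{j'}$ with $j'\ne j_0$ adjacent to $i_0$ can vanish there, since $H$-nondegeneracy applied to $(j',j_0,i_0)$ makes the zeros of $y_{j'}$ $q$-distinct from the poles of $y_{j_0}$. Because $q$-distinctness is invariant under $z\mapsto qz$, the shift by $q^{\pm 1}$ in the pole location is harmless. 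Thus $\rho_{i_0}$ would carry an uncancelled pole, contradicting $(2)$. Hence every $y_j$ is a polynomial; rescaling by scalars makes them monic, and the remaining $q$-distinctness asserted in $(3)$ is exactly the zero part of $H$-nondegeneracy. The main obstacle, as indicated, is precisely this no-cancellation analysis, which hinges on tracking the $q^{\pm1}$ shifts forced by the Coxeter ordering in \eqref{ri} and on the connectedness of the Dynkin diagram guaranteeing a neighbor $i_0$ of each $j_0$.
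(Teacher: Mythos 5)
Your proof is correct and follows essentially the same route as the paper: the same cycle $(1)\Rightarrow(2)\Rightarrow(3)\Rightarrow(1)$, with the key step $(2)\Rightarrow(3)$ argued by contradiction via a neighboring index and the $H$-nondegeneracy ($q$-distinctness) conditions ruling out cancellation of the pole in $\rho_{i_0}(z)$. Your write-up is somewhat more explicit than the paper's about which triples $(i,j,k)$ of Definition \ref{nondeg Cartan} are invoked, but the substance is identical.
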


\begin{proof} To prove that \eqref{nondegen2} implies
  \eqref{nondegen3}, we need only show that if each $\rho_i(z)$ given
  by formula \eqref{ri} is in $\C[z]$, then the $y_i(z)$'s are
  polynomials.  Suppose $y_i(z)$ is not a polynomial, and choose $j\ne
  i$ such that $a_{ij} \neq 0$.  Then $-a_{ij}>0$ and so the
  denominator of $y_i(z)$ or $y_i(qz)$ appears in the denominator of
  $\rho_j(z)$.  Moreover, since the poles of $y_i(z)$ are $q$-distinct
  from the zeros of $\Lambda_j(z)$ and the other $y_k(z)$'s, the poles
  of $y_i(z)$ or $y_i(qz)$ would give rise to poles of
  $\rho_j(z)$. But then ${\mc A}_j(z)$ would not have regular
  singularities.

  Next, assume \eqref{nondegen3}. Then $A(z)$ is $H$-nondegenerate by
  Definition \ref{nondeg Cartan}.  Since all the $y_i(z)$'s are
  polynomials, the same is true for the $\rho_i(z)$'s.  (Here, we
  are using the fact that the off-diagonal elements of the Cartan
  matrix, $a_{ij}$ with $i\neq j$, are less than or equal to 0.)
  Since $\rho_i(z)$ is a product of polynomials whose roots are
  $q$-distinct from the roots of $y_i(z)$, we see that the Cartan
  $q$-connection associated to ${\mc A}_i(z)$ is nondegenerate.

Finally, \eqref{nondegen2} is a trivial consequence of
\eqref{nondegen1}.
\end{proof}

If we apply a $q$-gauge transformation by an element $h(z)\in H[z]$ to
$A(z)$, we get a new $Z$-twisted Miura-Pl\"ucker $(G,q)$-oper.
However, the following proposition shows that it is only nondegenerate
if $h(z)\in H$.  As a consequence, the $\Lambda_k$'s of a
nondegenerate $q$-oper are determined up to scalar multiples. If we further impose the condition that each $y_i(z)$ is a {\em monic}
polynomial, then $h(z)=1$, and this fixes the $\Lambda_k$'s.

\begin{Prop} If $A(z)$ is a nondegenerate $Z$-twisted Miura-Pl\"ucker
  $(G,q)$-oper and $h(z)\in H[z]$, then $h(qz)A(z)h(z)^{-1}$ is
  nondegenerate if and only if $h(z)$ is a constant element of $H$.
\end{Prop}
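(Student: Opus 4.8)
The plan is to prove the easy implication by a direct computation and the hard implication by reducing, node by node, to the rank-one case already settled in Lemma \ref{nondegsl2}. Throughout I write $h(z)=\prod_i h_i(z)^{\check\alpha_i}$ with $h_i(z)\in\C(z)^\times$; this is possible because $G$ is simply connected, so the coroots $\check\alpha_i$ form a basis of the cocharacter lattice of $H$, and $h(z)$ is a constant element of $H$ precisely when every $h_i$ lies in $\C^\times$.

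For the ``if'' direction, suppose $h$ is a constant element of $H$. Then $h(qz)=h(z)=h$, so the $q$-gauge transformation is ordinary conjugation by $h$. Conjugation by $h$ commutes with each factor $g_i(z)^{\check\alpha_i}$ and sends $e^{(\Lambda_i(z)/g_i(z))e_i}$ to $e^{(\Lambda_i(z)/g_i(z))(\prod_j h_j^{a_{ji}})e_i}$, so in the normal form \eqref{form of A} it leaves every $g_i(z)$ unchanged and multiplies each $\Lambda_i(z)$ by a nonzero scalar; by \eqref{giyi} and \eqref{AH1} it replaces $y_i(z)$ by the scalar multiple $h_i y_i(z)$. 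Since the $q$-distinctness conditions of Definition \ref{nondeg Cartan} and of Proposition \ref{nondeg1} depend only on the sets of zeros and poles of the $y_i$ and the zeros of the $\Lambda_k$, all of which are preserved, and each $h_i y_i(z)$ is again (after rescaling) a monic polynomial, the oper $h(qz)A(z)h(z)^{-1}$ is again nondegenerate.

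For the ``only if'' direction the key step is to track the effect of the gauge transformation on the associated Miura $(\SL(2),q)$-opers ${\mc A}_i(z)$ of \eqref{Bi}. Because the associated Cartan $q$-connection transforms as $A^H(z)\mapsto h(qz)A^H(z)h(z)^{-1}$, formula \eqref{AH1} shows that the defining rational functions transform by $y_i(z)\mapsto h_i(z)y_i(z)$. I would then restrict the gauge to the rank-two bundle $\cW_i$ and feed it through the normalizations \eqref{cano}--\eqref{tilde calig} that relate $A|_{\cW_i}$ to ${\mc A}_i$. The claim is that, after the $z$-dependent factors $\prod_{j\ne i}y_j^{a_{ji}}$ coming from \eqref{cano}--\eqref{tilde calig} and the constants $\prod_{j\ne i}\zeta_j^{-a_{ji}}$ cancel against the corresponding factors in $h(z)|_{\cW_i}=\diag\!\big(h_i(z),\,h_i(z)^{-1}\prod_{j\ne i}h_j(z)^{-a_{ji}}\big)$, one is left with exactly the $\SL(2)$-gauge transformation
\[
{\mc A}_i(z)\ \longmapsto\ h_i(qz)^{\check\alpha}\,{\mc A}_i(z)\,h_i(z)^{-\check\alpha}.
\]
Verifying this cancellation carefully — in particular that the off-diagonal entry $\rho_i(z)$ of \eqref{ri} is multiplied by precisely $h_i(qz)h_i(z)$ and that no factor $h_j$ with $j\ne i$ survives — is the computational heart of the argument and the step I expect to be the main obstacle.

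Granting this, the conclusion is immediate. By Definition \ref{nondeg Miura}, nondegeneracy of $A(z)$ and of $h(qz)A(z)h(z)^{-1}$ means that for every $i$ both ${\mc A}_i(z)$ and its image $h_i(qz)^{\check\alpha}{\mc A}_i(z)h_i(z)^{-\check\alpha}$ are nondegenerate $\check\alpha_i(\zeta_i)$-twisted Miura $(\SL(2),q)$-opers lying in the same $H[z]$-gauge class. Lemma \ref{nondegsl2}(1) asserts that the nondegenerate representative in such a gauge class is unique up to a scalar; concretely, the defining polynomials of the two opers, namely $y_i(z)$ and $h_i(z)y_i(z)$, can differ only by a constant factor. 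Hence each $h_i(z)\in\C^\times$, so $h(z)=\prod_i h_i^{\check\alpha_i}$ is a constant element of $H$, as claimed. (When $\rank G=1$ there is no reduction to perform and the statement is precisely Lemma \ref{nondegsl2}(1) applied to $A(z)$ itself.)
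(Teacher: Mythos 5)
Your proposal is correct and follows essentially the same route as the paper: the paper's entire proof is the one-line assertion that gauge transformation of $A(z)$ by $h(z)=\prod_i h_i(z)^{\check\alpha_i}$ induces gauge transformation of each associated rank-two oper by $h_i(z)$, followed by an appeal to Lemma \ref{nondegsl2}. The cancellation you flag as the main obstacle does go through (one checks $\wt{u}(z)\,h(z)|_{W_i}=h_i(z)^{\check\alpha}\,u(z)$ for the diagonal normalizing matrices of \eqref{cano}--\eqref{tilde calig}, so $\mathcal{A}_i(z)\mapsto h_i(qz)^{\check\alpha}\mathcal{A}_i(z)h_i(z)^{-\check\alpha}$), and your concluding application of Lemma \ref{nondegsl2}(1) matches the paper's.
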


\begin{proof} Write $h(z)=\prod h_i(z)^{\check{\alpha}_i}$.  Gauge
  transformation of $A(z)$ by $h(z)$ induces a gauge transformation of
  $A_i(z)$ by $h_i(z)$.  Since $A_i(z)$ is nondegenerate,
  Lemma~\ref{nondegsl2} implies that the new Miura $(\SL(2),q)$-oper
  is nondegenerate if and only $h_i\in\C^\times$.
\end{proof}

\subsection{Dependence on the Coxeter element}

We end this section with a preliminary result on the dependence of our
results on the specific Coxeter element fixed in the definition of
$q$-opers.  We will see later in Section~\ref{equiv} that the
$QQ$-systems obtained from different choices of Coxeter element are
equivalent.  Here, we show that if two Coxeter elements $c$ and $c'$
are related by a cyclic permutation of their simple reflection
factors, then the corresponding spaces of $(G,q)$-opers with regular
singularities are isomorphic via a map defined in terms of
$B_+(z)$-gauge transformations.  Moreover, this map preserves
nondegeneracy.

\begin{Prop}\label{prop:coxchoice}
  Let $c$ and $c'$ be two Coxeter elements that differ by a cyclic
  permutation of their simple reflection factors.  Then, there is an
  isomorphism between the spaces of $Z$-twisted Miura $(G,q)$-opers
  with regular singularities defined in terms of $c$ and $c'$ of the
  form $A(z)\mapsto f_A(qz) A(z) f_A(z)^{-1}$, where $f_A\in B_+(z)$.
  This isomorphism takes nondegenerate opers to nondegenerate opers.
\end{Prop}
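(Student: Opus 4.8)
The plan is to reduce immediately to a single elementary shift and then write down the gauge element explicitly from the canonical factored form of a Miura oper. Since any cyclic permutation is a composite of the elementary shift sending the ordering $(i_1,i_2,\dots,i_r)$ to $(i_2,\dots,i_r,i_1)$, it suffices to treat $c=w_{i_1}\cdots w_{i_r}$ and $c'=w_{i_2}\cdots w_{i_r}w_{i_1}$ and then compose. By Corollary \ref{Miura form} and Theorem \ref{gen elt1}, a $Z$-twisted Miura $(G,q)$-oper with regular singularities $\{\Lambda_i\}$ has, in its $B_+$-trivialization, a connection of the form \eqref{form of A}; writing $\varphi_i(z)=g_i(z)^{\check{\alpha}_i}e^{\frac{\Lambda_i(z)}{g_i(z)}e_i}\in B_+(z)$, this reads $A(z)=\varphi_{i_1}(z)\varphi_{i_2}(z)\cdots\varphi_{i_r}(z)$ with the product taken in $c$-order, and each $g_i(z)$ uniquely determined by $A$. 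I would then set $f_A(z)=\varphi_{i_2}(z)\cdots\varphi_{i_r}(z)\in B_+(z)$, which depends on $A$ precisely through this unique factorization.

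The key computation is that the tail of $A(z)$ cancels against $f_A(z)^{-1}$, giving
\[
f_A(qz)\,A(z)\,f_A(z)^{-1}=\varphi_{i_2}(qz)\cdots\varphi_{i_r}(qz)\,\varphi_{i_1}(z).
\]
The right-hand side is a product, in $c'$-order, of factors of exactly the shape \eqref{form of A}, with $g'_{i_k}(z)=g_{i_k}(qz)$ and $\Lambda'_{i_k}(z)=\Lambda_{i_k}(qz)$ for $k\ge 2$, while $g'_{i_1}=g_{i_1}$, $\Lambda'_{i_1}=\Lambda_{i_1}$. Here Theorem \ref{gen elt1} is what I would invoke to conclude that any element of this form genuinely lies in the intersection of $B_+(z)$ with $N_-(z)\prod^{(c')}\Lambda'_i{}^{\check{\alpha}_i}s_iN_-(z)$, so that it underlies a Miura $(G,q)$-oper for the Coxeter element $c'$ with regular singularities $\{\Lambda'_i\}$; this is the step I expect to require the most care, since it is the only place where ``being of the factored form'' must be upgraded to ``being a $c'$-oper.'' It remains $Z$-twisted for the \emph{same} $Z$: if $A=v(qz)Zv(z)^{-1}$ as in \eqref{gaugeA2}, then the transform equals $(f_Av)(qz)\,Z\,(f_Av)(z)^{-1}$ with $f_Av\in B_+(z)$. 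Invertibility is explicit: the map $A'\mapsto R(qz)^{-1}A'(z)R(z)$, where $R(z)=\varphi_{i_2}(z)\cdots\varphi_{i_r}(z)$ is reconstructed from the (unique) $c'$-factorization of the transformed oper, recovers $A$, so the construction is a bijection between the two oper spaces.

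Finally, for nondegeneracy I would track the Cartan data through \eqref{giyi}, $g_i(z)=\zeta_i y_i(qz)/y_i(z)$: the relabeling above corresponds to $y'_{i_k}(z)=y_{i_k}(qz)$ for $k\ge 2$ and $y'_{i_1}=y_{i_1}$, as one checks from $g'_{i_k}(z)=g_{i_k}(qz)$. Hence polynomiality of the $y_i$ is preserved (monicity up to the scalar rescaling permitted in Proposition \ref{nondeg1}), and the $\Lambda'_{i_k}(z)=\Lambda_{i_k}(qz)$ are again polynomials. The remaining conditions of Definition \ref{nondeg Cartan} and Proposition \ref{nondeg1}(\ref{nondegen3}) are $q$-distinctness conditions, i.e. disjointness of the $q^{\Z}$-orbits of the relevant roots; since replacing $z$ by $qz$ multiplies every root by $q^{-1}$ and therefore preserves its $q^{\Z}$-orbit, every such disjointness statement survives verbatim, and the index sets over which they are imposed depend only on the Cartan matrix and not on the Coxeter element. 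Thus $H$-nondegeneracy and nondegeneracy are preserved, and applying the same reasoning to the inverse map gives the converse. The invariance of $q$-distinctness under the shift $z\mapsto qz$ is exactly what makes the last step automatic, so the only genuinely delicate part of the argument is the appeal to Theorem \ref{gen elt1} identifying the transformed connection as a $c'$-oper.
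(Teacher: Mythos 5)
Your proposal is correct and follows essentially the same route as the paper: reduce to a single elementary cyclic shift and gauge by an explicit element of $B_+(z)$ read off from the factored form \eqref{form of A}, then observe that the $y_i$'s and $\Lambda_i$'s only change by $q$-shifts, which preserve polynomiality and all $q$-distinctness conditions. The only difference is cosmetic: the paper takes $f_A(qz)=\bigl(g_{i_1}(z)^{\check{\alpha}_{i_1}}e^{\Lambda_{i_1}(z)g_{i_1}(z)^{-1}e_{i_1}}\bigr)^{-1}$, so that only the $i_1$-component acquires a $q^{-1}$-shift, whereas your choice of $f_A$ as the tail of the product shifts the components $i_2,\dots,i_r$ by $q$ instead; the two transformed opers differ by the overall substitution $z\mapsto qz$ (i.e.\ by the additional gauge transformation by $A$ itself), and both establish the claim.
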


\begin{proof} Without  loss of generality, we may assume that
  $c=w_{i_1}\dots w_{i_r}$ and $c'=w_{i_2}\dots w_{i_r}w_{i_1}$.
  Given
$$
A(z)=\prod_{j=1}^r
  g_{i_j}(z)^{\check{\alpha}_{i_j}} \;
  e^{\frac{\Lambda_{i_j}(z)}{g_{i_j}(z)}e_{i_j}},
$$
set
$$
f_A(qz)=\left(g_{i_1}(z)^{\check{\alpha}_{i_1}}
e^{\frac{\Lambda_{i_1}(z)}{g_{i_1}(z)}e_{i_1}}\right)^{-1}.
$$
The effect of gauge transformation by $f_A(z)$ is to move the
$q^{-1}$-shift of the $i_1$ component of $A$ to the end of the
product, thereby giving the order corresponding to $c'$.  The new
$y_i$'s and $\Lambda_i$'s are the same except for the $q^{-1}$-shift
of $y_{i_1}$ and $\Lambda_{i_1}$, so it is obvious that the new
$q$-oper also has regular singularities and is nondegenerate if the
original $q$-oper was. It is also clear that this map is an
isomorphism.
\end{proof}

\section{$(SL(2), q)$-opers and the Bethe Ansatz equations}
\label{Sec:SL2review}

Our goal is to establish a bijection between the set of nondegenerate
$Z$-twisted Miura-Pl\"ucker $(G,q)$-opers and the set of nondegenerate
solutions of a system of Bethe Ansatz equations. In this section, we
show this for $G=\SL(2)$, which corresponds to the XXZ
model. This was already shown in \cite{KSZ}, in which a slightly
different definition of $(\SL(2),q)$-opers was used. Below, we explain
the connection to the formalism used in \cite{KSZ}.

\subsection{From non-degenerate $(\SL(2), q)$-opers to the
  $QQ$-system}

Suppose we have a $Z$-twisted nondegenerate Miura (equivalently, a
Miura-Pl\"ucker) $(\SL(2),q)$-oper. As explained in Section
\ref{nondeg sl2}, the underlying $q$-connection may be written in the
form
$$
A(z) = \begin{pmatrix}
   g(z) & \Lambda(z) \\
   0 & g(z)^{-1}
  \end{pmatrix},
$$
and furthermore, there exists $v(z) \in B_+(z)$ such that
\begin{equation}    \label{Azvz}
A(z) = v(zq) Z v(z)^{-1}, \qquad Z = \begin{pmatrix}
   \zeta & 0 \\
   0 & \zeta^{-1}
  \end{pmatrix}.
\end{equation}
Write
\begin{equation}    \label{exp vz}
v(z) =  \begin{pmatrix}
   y(z) & 0 \\
   0 & y(z)^{-1}
  \end{pmatrix} \begin{pmatrix}
   1 & - \frac{Q_-(z)}{Q_+(z)} \\
   0 & 1
  \end{pmatrix} = \begin{pmatrix}
   y(z) & - y(z) \frac{Q_-(z)}{Q_+(z)} \\
   0 & y(z)^{-1}
  \end{pmatrix},
\end{equation}
where $Q_+(z)$ and $Q_-(z)$ are relatively prime polynomials such that
$Q_+(z)$ is a monic polynomial. Formula \eqref{Azvz} then yields
$$
g(z) = \zeta_i y(zq)y(z)^{-1}
$$
and
\begin{equation}    \label{Lay}
\Lambda(z) = y(z) y(zq) \left( \zeta \frac{Q_-(z)}{Q_+(z)} - \zeta^{-1}
\frac{Q_-(zq)}{Q_+(zq)} \right).
\end{equation}
Nondegeneracy (see Definition \ref{ngsl2}) means that
$\Lambda(z)$ and $y(z)$ are polynomials whose roots are $q$-distinct
from each other. This can only be satisfied if $y(z)$ equals a scalar
multiple of $Q_+(z)$. Since we have the freedom to rescale $y(z)$,
without loss of
generality we can and will assume that $y(z)=Q_+(z)$. Equation
\eqref{Lay} then becomes
\begin{equation}    \label{QQ sl2}
\zeta Q_-(z) Q_+(zq) - \zeta^{-1} Q_-(zq) Q_+(z) = \Lambda(z).
\end{equation}
We call equation \eqref{QQ sl2} the $QQ$-{\em system} associated to
$\SL(2)$. (See the last paragraph of Section \ref{QQBethesl2} and
Section \ref{prior} for a discussion of the origins of this system in
the XXZ model.) Here, $\Lambda(z)$ is fixed: it is the polynomial used
in the definition of a Miura $(\SL(2),q)$-opers which contains the
information about their regular singularities.  Thus, the $QQ$-system
is an equation on two polynomials $Q_+(z), Q_-(z)$.

Let us call a solution $\{ Q_+(z), Q_-(z) \}$ of \eqref{QQ sl2} {\em
  nondegenerate} if $Q_+(z)$ is a monic polynomial whose roots are
$q$-distinct from the roots of the polynomial $\Lambda(z)$. No
conditions are imposed on $Q_-(z)$, but note that the nondegeneracy
condition and formula \eqref{QQ sl2} imply that $Q_+(z)$ and $Q_-(z)$
are {\em relatively prime}. The above discussion is summarized in the
following statement.

\begin{Thm}    \label{isom sl2}
  There is a one-to-one correspondence between the set of
  nondegenerate $Z$-twisted $(\SL(2),q)$-opers with regular
  singularity determined by a polynomial $\Lambda(z)$ and the
  set of nondegenerate solutions of the $QQ$-system \eqref{QQ sl2}.
\end{Thm}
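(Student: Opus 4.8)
The plan is to upgrade the computation carried out just above into two explicit maps, one in each direction, and then to check that they are mutually inverse and respect the nondegeneracy conditions. For the forward map I would begin with a nondegenerate $Z$-twisted Miura $(\SL(2),q)$-oper, written as in \eqref{Azvz} with $v(z)\in B_+(z)$ realizing the twist, and expand $v(z)$ as in \eqref{exp vz} to produce a rational function $y(z)$ and a reduced ratio $Q_-(z)/Q_+(z)$ with $Q_+$ monic. The decisive point, already established in Section \ref{nondeg sl2}, is that nondegeneracy (Definition \ref{ngsl2}) forces $\Lambda(z)$ and $y(z)$ to be polynomials with $q$-distinct roots; substituting this into \eqref{Lay} shows that $y(z)$ is a scalar multiple of $Q_+(z)$, and the residual rescaling lets us normalize $y(z)=Q_+(z)$. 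With this normalization \eqref{Lay} becomes exactly the $QQ$-system \eqref{QQ sl2}, so $(Q_+,Q_-)$ is a solution; it is nondegenerate since $Q_+$ is monic with roots $q$-distinct from those of $\Lambda$, and as noted after \eqref{QQ sl2} the equation itself forces $Q_+$ and $Q_-$ to be relatively prime.

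For the inverse map I would start from a nondegenerate solution $\{Q_+(z),Q_-(z)\}$ of \eqref{QQ sl2}, set $y(z)=Q_+(z)$, define $v(z)$ by \eqref{exp vz}, and put $A(z):=v(zq)\,Z\,v(z)^{-1}$. A direct $2\times 2$ multiplication shows that $A(z)$ is upper-triangular with diagonal entry $g(z)=\zeta\,Q_+(zq)/Q_+(z)$ and upper-right entry equal to the left-hand side of \eqref{QQ sl2}, i.e. to the prescribed polynomial $\Lambda(z)$; hence $A(z)$ has the oper form \eqref{form of A} with regular singularity determined by $\Lambda$, and by construction it is $Z$-twisted Miura with $v\in B_+(z)$. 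Since its associated function $y=Q_+$ is a polynomial whose roots are $q$-distinct from those of $\Lambda$, the oper is $H$-nondegenerate, hence nondegenerate in the sense of Definition \ref{ngsl2}. The composite solution $\to$ oper $\to$ solution is then immediate, because an oper of the form \eqref{form of A} is determined by $g(z)$ and $\Lambda(z)$, which return $Q_+$ through the monic normalization and then $Q_-$ through \eqref{QQ sl2} unchanged.

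The remaining, and genuinely delicate, step is the opposite composite oper $\to$ solution $\to$ oper, which amounts to showing that the pair $(Q_+,Q_-)$ extracted from a given Miura oper is canonical, i.e. independent of the choice of $v(z)$ realizing the twist. I expect this well-definedness to be the main obstacle. Any two admissible choices of $v(z)$ differ by an element $s(z)\in B_+(z)$ with $s(zq)=Z s(z)Z^{-1}$, which for $\SL(2)$ reduces to the scalar $q$-difference equations $a(zq)=a(z)$ and $b(zq)=\zeta^{2}b(z)$ for the diagonal and off-diagonal entries of $s$. The key is that, because $q$ is not a root of unity, the first equation forces $a$ to be a constant (exactly as in the uniqueness argument of Section \ref{Hqconn}), and the monic normalization $y=Q_+$ then pins $a=1$; the off-diagonal entry of $v(z)$ thereupon determines $Q_-/Q_+$, and relative primality makes $Q_-$ itself canonical. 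Once this rigidity is in place, the two assignments are inverse to one another and the stated bijection follows.
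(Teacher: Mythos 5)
Your forward map is exactly the paper's argument: the paper's ``proof'' of Theorem \ref{isom sl2} is precisely the displayed computation from \eqref{Azvz} through \eqref{Lay} to \eqref{QQ sl2}, with nondegeneracy forcing $y(z)$ to be a scalar multiple of $Q_+(z)$ and the rescaling freedom used to set $y=Q_+$. Your inverse map (set $y=Q_+$, define $v$ by \eqref{exp vz}, put $A=v(qz)Zv(z)^{-1}$) and the composite solution $\to$ oper $\to$ solution are also fine. You are moreover right to single out well-definedness of the oper $\to$ solution direction as the genuinely delicate point; the paper does not address it at all, so here you are attempting more than the paper's own proof does.

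However, your resolution of that delicate step has a gap exactly where you wave your hands: you correctly reduce the ambiguity in $v(z)$ to an element $s(z)\in B_+(z)$ with diagonal part $a(qz)=a(z)$ and off-diagonal part $b(qz)=\zeta^{2}b(z)$, and you correctly dispose of $a$, but you then assert that ``the off-diagonal entry of $v(z)$ thereupon determines $Q_-/Q_+$'' without solving the second equation. That equation has nonzero rational solutions, namely $b(z)=cz^{n}$, precisely when $\zeta^{2}=q^{n}$ for some $n\in\Z$. In that case $\bigl(Q_+(z),\,Q_-(z)-cz^{n}Q_+(z)\bigr)$ is another nondegenerate solution of \eqref{QQ sl2} (the extra term contributes $cz^{n}Q_+(z)Q_+(qz)(\zeta-\zeta^{-1}q^{n})=0$), it is still relatively prime to $Q_+$, and it yields the same oper $q$-connection; so the correspondence is not injective and the claimed bijection fails. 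The missing ingredient is the hypothesis $\zeta^{2}\notin q^{\Z}$, i.e.\ the rank-one case of assumption \eqref{assume}, which the paper imposes only in Section \ref{Sec:QQsystem} but which is tacitly needed here as well (it is also what makes the periodicity argument of Section \ref{Hqconn} and equations \eqref{rm} work). With that hypothesis stated, $b(qz)=\zeta^{2}b(z)$ forces $b=0$ by comparing Laurent coefficients, and your rigidity argument closes; without it, the step you flagged as the main obstacle is not merely unproved but false.
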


\subsection{From the $QQ$-system to the Bethe Ansatz equations}
\label{QQBethesl2}

Under our assumption that $Q_+(z)$ is a monic polynomial, we can write
$$
Q_+(z) = \prod_{k=1}^m (z-w_k).
$$
Evaluating \eqref{QQ sl2} at $q^{-1}z$, we get
$$
\Lambda(q^{-1}z)=\zeta Q_{-}(q^{-1}z)Q_{+}(z)-\zeta^{-1}
Q_{-}(z)Q_{+}(q^{-1}z).
$$
If we divide \eqref{QQ sl2} by this equation and evaluate at the
roots $w_k$ of $Q_+(z)$, we obtain the following equations:
\begin{equation}    \label{BAE sl2}
\frac{\Lambda(w_k)}{\Lambda(q^{-1}w_k)}=
-\zeta^{2}\frac{Q_+(qw_k)}{Q_+(q^{-1}w_k)}, \qquad k=1,\ldots,m.
\end{equation}

These equations are equivalent to the Bethe Ansatz equations of the
XXZ model, i.e., the quantum spin chain associated to $U_q
\widehat\sl_2$. To express them in a more familiar form, suppose that
$\Lambda(z)$ is a monic polynomial all of whose roots are non-zero and
simple.  Recalling that we do not require the roots of $\Lambda(z)$ to
be mutually $q$-distinct, we write $\Lambda(z)$ explicitly as
\begin{equation}    \label{Lambda sl2}
\Lambda(z) = \prod^{L}_{p=1}\prod^{r_p-1}_{j_p=0}(z-q^{-j_p}z_p),
\end{equation}
where the $z_p$'s are mutually $q$-distinct and non-zero. Setting $r =
\sum_{p=1}^L t_p$, the equations \eqref{BAE sl2} become
\begin{equation}    \label{sl2qbethe}
q^r \prod_{p=1}^L\frac{w_k-q^{1-r_p}z_p}{w_k-q
    z_p}=-\zeta^{2}q^m \prod^m_{j=1}\frac{q
    w_k-w_j}{w_k-q w_j},\qquad k=1,\dots,m.
\end{equation}
This is a more familiar form of the Bethe Ansatz equations in the XXZ
model (see e.g. \cite{Frenkel:2013uda}, Section 5.6).

Let us call a solution $Q_+(z)$ of the system of Bethe Ansatz
equations \eqref{BAE sl2} {\em nondegenerate} if $Q_+(z)$ is a monic
polynomial whose roots are $q$-distinct from the roots of
$\Lambda(z)$. It is clear that if $\{ Q_+(z),Q_-(z) \}$ is a
nondegenerate solution of \eqref{QQ sl2}, then $Q_+(z)$ is a
nondegenerate solution of \eqref{BAE sl2}, and vice versa. The above
calculation, combined with Theorem \ref{isom sl2}, proves the
following result.

\begin{Thm}    \label{BASL2}
  There is a one-to-one correspondence between the set of
  nondegenerate $Z$-twisted $(\SL(2),q)$-opers with regular
  singularity determined by a polynomial $\Lambda(z)$ and the set of
  nondegenerate solutions of the Bethe Ansatz equations \eqref{BAE
    sl2}.
\end{Thm}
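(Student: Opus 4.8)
The plan is to deduce Theorem \ref{BASL2} by composing two bijections already established in the preceding material. Theorem \ref{isom sl2} gives a one-to-one correspondence between nondegenerate $Z$-twisted $(\SL(2),q)$-opers with regular singularity $\Lambda(z)$ and nondegenerate solutions $\{Q_+(z),Q_-(z)\}$ of the $QQ$-system \eqref{QQ sl2}. So the only thing left to produce is a bijection between nondegenerate solutions of the $QQ$-system and nondegenerate solutions of the Bethe Ansatz equations \eqref{BAE sl2}, where on the Bethe side a solution is recorded by the single polynomial $Q_+(z)$. The forward direction (from $QQ$-system to Bethe) is exactly the computation carried out in Section \ref{QQBethesl2}: evaluating \eqref{QQ sl2} at $q^{-1}z$, dividing the two relations, and specializing at the roots $w_k$ of $Q_+(z)$ produces \eqref{BAE sl2}. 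I would cite this calculation directly.

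The substance of the proof is therefore the reverse map: given a nondegenerate solution $Q_+(z)$ of \eqref{BAE sl2}, I must recover a companion polynomial $Q_-(z)$ so that $\{Q_+(z),Q_-(z)\}$ solves the $QQ$-system with the fixed $\Lambda(z)$, and show this $Q_-$ is unique. Here is how I would argue. Fix $Q_+(z)=\prod_{k=1}^m(z-w_k)$, monic with roots $q$-distinct from those of $\Lambda(z)$. The $QQ$-system \eqref{QQ sl2} reads
\begin{equation*}
\zeta Q_-(z) Q_+(qz) - \zeta^{-1} Q_-(qz) Q_+(z) = \Lambda(z),
\end{equation*}
which, for fixed $Q_+$, is a {\em linear} (first-order $q$-difference) equation for the unknown $Q_-$. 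The Bethe equations \eqref{BAE sl2} are precisely the compatibility conditions guaranteeing that this linear equation admits a \emph{polynomial} solution: they assert that the apparent poles at the $w_k$ cancel. Concretely, treating \eqref{QQ sl2} as prescribing $Q_-$ I would solve for $Q_-(z)$ rationally and check that the Bethe conditions force all potential poles (coming from the factors $Q_+(z)$ and $Q_+(qz)$ in the denominators) to be removable, so that $Q_-(z)$ is in fact a polynomial. The nondegeneracy of the resulting pair ($Q_+$ monic with roots $q$-distinct from $\Lambda$) is built into the hypothesis on $Q_+$; relative primality of $Q_+$ and $Q_-$ then follows from \eqref{QQ sl2} as already noted in the text, since any common root would force a root of $\Lambda$ to coincide (after a $q$-shift) with a root of $Q_+$.

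For uniqueness of $Q_-$ I would argue that the difference of two solutions satisfies the homogeneous equation $\zeta R(z)Q_+(qz)=\zeta^{-1}R(qz)Q_+(z)$; comparing roots and using that $q$ is not a root of unity (so the $q$-orbits are infinite) forces $R$ to vanish, hence $Q_-$ is determined by $Q_+$. This shows the Bethe-to-$QQ$ map is well-defined and two-sided inverse to the $QQ$-to-Bethe map, giving the desired bijection. Composing with Theorem \ref{isom sl2} yields the claimed correspondence between nondegenerate $Z$-twisted $(\SL(2),q)$-opers and nondegenerate solutions of \eqref{BAE sl2}.

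The main obstacle I anticipate is the reverse direction's regularity check: verifying that the Bethe Ansatz equations are exactly equivalent to the vanishing of the residues of the rational candidate for $Q_-(z)$ at each $w_k$, so that polynomiality of $Q_-$ is neither stronger nor weaker than \eqref{BAE sl2}. This is the crux, since the statement ``$\{Q_+,Q_-\}$ solves the $QQ$-system'' must be shown equivalent to ``$Q_+$ solves the Bethe equations'' and not merely implied in one direction. Everything else — the forward computation and the uniqueness of $Q_-$ — is routine given the results already in the excerpt, so I would keep those brief and concentrate the argument on this equivalence of the pole-cancellation conditions with \eqref{BAE sl2}.
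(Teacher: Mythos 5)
Your overall strategy coincides with the paper's: Theorem \ref{isom sl2} reduces everything to a bijection between nondegenerate solutions of the $QQ$-system \eqref{QQ sl2} and nondegenerate solutions of the Bethe equations \eqref{BAE sl2}, with the forward direction given by the evaluation-at-roots computation of Section \ref{QQBethesl2}. The paper is terse about the reverse direction in the $\SL(2)$ case (it declares the equivalence ``clear'' and defers the real work to the general-rank Theorem \ref{BAE}), and your plan of reconstructing $Q_-$ from $Q_+$ by pole cancellation is exactly what that later proof does via the partial fraction decompositions \eqref{hi}--\eqref{tildephi}. So you have identified the right route and the right crux.

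However, two steps you dismiss as routine are not, and one of them is wrong as stated. First, you cannot ``solve for $Q_-(z)$ rationally'': for fixed $Q_+$, relation \eqref{QQ sl2} involves both $Q_-(z)$ and $Q_-(qz)$, so it is a first-order $q$-difference equation rather than an algebraic one. The construction must proceed as in the proof of Theorem \ref{BAE}: set $\phi=Q_-/Q_+$, rewrite the relation as $\zeta\phi(z)-\zeta^{-1}\phi(qz)=\Lambda(z)/\bigl(Q_+(z)Q_+(qz)\bigr)$, read off the residues of $\phi$ at the roots $w_k$ from the two $q$-shifted families of simple poles on the right-hand side (the Bethe equations are precisely the consistency condition \eqref{dk} between the two determinations), and then fix the polynomial part of $\phi$ coefficient by coefficient from the recursion \eqref{rm}, which here reads $r_m(\zeta q^m-\zeta^{-1})=s_m$. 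Second, both the solvability of that recursion and your uniqueness argument require $\zeta^2\notin q^{\Z}$, i.e.\ condition \eqref{assume} specialized to $\SL(2)$, not merely that $q$ is not a root of unity. Indeed, the homogeneous equation $\zeta R(z)Q_+(qz)=\zeta^{-1}R(qz)Q_+(z)$ becomes $S(qz)=\zeta^2S(z)$ for $S=R/Q_+$; besides the solutions excluded by the infinitude of $q$-orbits of zeros, it admits $S=cz^k$ whenever $\zeta^2=q^k$, giving for $k\ge 0$ the nonzero polynomial solution $R=cz^kQ_+(z)$ and destroying injectivity of the map from $QQ$-solutions to Bethe solutions. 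So your claim that ``$q$ not a root of unity forces $R$ to vanish'' is false without the genericity hypothesis on $Z$, and \eqref{assume} must be invoked at exactly this point, as it is in the paper's proof of Theorem \ref{BAE}.
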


It is known that the Bethe Ansatz equations \eqref{BAE sl2} parametrize
the spectra of the quantum transfer-matrices in the XXZ model
corresponding to $U_{q'} \wh\sl_2$, where $q'=q^{-2}$, with the space
of states being the tensor product of finite-dimensional
representations of $U_{q'} \wh\sl_2$ (see
e.g. \cite{Frenkel:2013uda}). The polynomial $\Lambda(z)$ is the
product of the Drinfeld polynomials of these representations, up to
multiplicative shifts by powers of $q$. Furthermore, we expect
that the $QQ$-system \eqref{QQ sl2} can be
derived from the $Q\wt{Q}$-relation in the Grothendieck ring of the
category ${\mc O}$ of $U_{q'} \wh\sl_2$ proved in \cite{Frenkel:ac}.

\subsection{An approach using the $q$-Wronskian}

In \cite{KSZ}, the equations \eqref{QQ sl2} and \eqref{BAE sl2} were
derived in a slightly different way, and analogous results were also
obtained for $G=\SL(n)$. We now make an explicit connection between
this approach and the approach of the preceding section.

Recall Definition \ref{GL2} of $(\GL(2),q)$-opers. Adding the
condition that the underlying rank two vector bundle $\cW$ can be
identified with the trivial line bundle so that $\on{det}(A)=1$, we
obtain the definition of Miura $(\SL(2),q)$-opers. The oper condition
is now expressed as the existence of a line subbundle $\wt\cL \subset
\cW$ for which $\bar{A}: \wt\cL \to \cW/\wt\cL$ is an isomorphism on a
open dense subset of $\P^1$. Choose any trivialization of $\cW$
on an open dense subset $U$, and let $s(z)$ be a section of $\cW$ on
this subset that generates the line subbundle $\wt\cL$. The
$q$-connection $A(z)$ then satisfies the condition
$$
s(qz)\wedge A(z) s(z)\neq 0
$$
on a Zariski open dense subset $V$ of $U$. This is the definition of a
general meromorphic $(\SL(2),q)$-oper.

From this perspective, $(\SL(2),q)$-opers with regular singularities are
defined in \cite{KSZ} as follows.

\begin{Def}
  An \emph{$(\SL(2),q)$-oper with regular singularities determined by
    $\Lambda(z)$} is a meromorphic $(\SL(2),q)$-oper $(\cE,A,\wt\cL)$
  such that $s(qz)\wedge A(z) s(z) = \Lambda(z)$.
\end{Def}
This definition is equivalent to Definition \ref{d:regsing}.

Consider a diagonal matrix $Z=\diag(\ze,\ze^{-1})$ with $\ze\ne\pm 1$.
Recall that an $(\SL(2),q)$-oper $(\cE,A,\wt\cL)$ is a
\emph{$Z$-twisted $q$-oper} if $A$ is gauge equivalent to $Z$.  (We
remark that in \cite{KSZ}, a $Z$-twisted $q$-oper was actually defined
to be one that is gauge equivalent to $Z^{-1}$.  This does not matter
at the level of $q$-opers, but does matter if we consider the
corresponding Miura $q$-opers.)

Now, suppose that $(\cE, A,\mathcal{L})$ is a $Z$-twisted
$(\SL(2),q)$-oper with regular singularities determined by
a monic polynomial $\Lambda(z)$. (Note that we can assume that
$\Lambda(z)$ is monic after multiplying the section $s$ by a nonzero
constant.) Choose a trivialization of $\cE$ with respect to
which the $q$-connection matrix is $Z$. Since $\wt\cL$ can be
trivialized on $\P^1\setminus\infty$, it is generated by a section
\begin{equation}    \label{sz}
  s(z)=\begin{pmatrix} Q_-(z)\\Q_+(z)
  \end{pmatrix},
\end{equation}
where $Q_+(z)$ and $Q_-(z)$ are relatively prime polynomials and
$Q_+(z)$ is monic. Furthermore, the polynomials $Q_+(z), Q_-(z)$
satisfying these conditions are uniquely determined by $\wt\cL$.

Regular singularity of the $q$-oper then becomes an explicit equation
for the \emph{$q$-Wronskian} of $Q_-(z)$ and $Q_+(z)$:
\begin{equation}    \label{eq:qOpDefSL2}
  \zeta Q_{-}(z)Q_{+}(qz)-\zeta^{-1}
  Q_{-}(qz)Q_{+}(z) = \Lambda(z).
\end{equation}
This is just the $QQ$-system \eqref{QQ sl2}.

Note that in \cite{KSZ} it was assumed that $\Lambda(z)$ has the form
\eqref{Lambda sl2}, i.e. that the polynomial $\Lambda(z)$ only has
simple roots, and 0 is not a root. However, the same derivation of
\eqref{eq:qOpDefSL2} works for any monic polynomial $\Lambda(z)$.

Next, we explain the link between the section $s(z)$ and the $q$-oper
\eqref{Azvz}. Recall that a Miura $(\SL(2),q)$-oper in the sense of
Definition \ref{GL2} is a quadruple $(\cW,A,\wt\cL,\cL)$, where
$\wt\cL$ is a line subbundle satisfying the $q$-oper condition with
respect to $A$ and $\cL$ is an $A$-invariant line subbundle.

In the particular trivialization of $\cW$ that we are now considering,
the $q$-connection $A$ is equal to $Z$, $\wt\cL$ is generated by the
section \eqref{sz}, and $\cL$ is generated by
$\left( \begin{smallmatrix} 1 \\
    0 \end{smallmatrix} \right)$. To bring it to the form
\eqref{Azvz}, we need to change the trivialization of $\cW$ in such a
way that $\cL$ is still generated by $\left( \begin{smallmatrix} 1 \\
    0 \end{smallmatrix} \right)$, and $\wt\cL$ is generated
by$\left( \begin{smallmatrix} 0 \\
    1 \end{smallmatrix} \right)$. With respect to this new
trivialization, the oper $q$-connection becomes equal to the $q$-gauge
transformation of $Z$ by the corresponding element $U(z) \in G(z)$.
The above conditions means that $U(z)$ should preserve $\cL$, i.e. it
should be in $B_+(z)$ and should satisfy
$$
U(z)s(z)=\left(\begin{matrix}0\\1
  \end{matrix}\right).
$$

There is a unique such $U(z)$, namely, 
\begin{equation}
U(z)=\begin{pmatrix}
   Q_+(z) & -Q_-(z)\\
   0 &  Q_+^{-1}(z)
 \end{pmatrix}.
\end{equation}
Applying the $q$-gauge transformation by $U(z)$ to $Z$, we obtain a
formula for the oper $q$-connection $A(z)$ in the new trivialization
of $\cW$:
\begin{align}    \label{acon}
A(z)&=\begin{pmatrix} Q_+(qz) & - Q_-(qz)\\
   0 & Q_+^{-1}(qz)
 \end{pmatrix} \begin{pmatrix} \zeta & 0 \\
   0 & \zeta^{-1}
 \end{pmatrix}
\begin{pmatrix}  Q_+(z)^{-1} & Q_-(z)\\
   0 &  Q_+(z)
 \end{pmatrix}\\    \label{acon1}
&=\begin{pmatrix} \zeta Q_+(qz)Q^{-1}_+(z)&\Lambda(z)\\
   0 &  \zeta^{-1} Q_+^{-1}(qz)Q_+(z)
 \end{pmatrix},
\end{align}
where $\Lambda(z)$ is the $q$-Wronskian \eqref{eq:qOpDefSL2}.

Thus, we have arrived at a nondegenerate $Z$-twisted Miura
$(\SL(2),q)$-oper in the sense of Section \ref{Sec:nondegMiura}:
$A(z)=g^{\check\alpha}(z)e^{\frac{\Lambda(z)}{g(z)}e}$, where $g(z) =$
$\zeta Q_+(zq)Q_+(z)^{-1}$.

\section{Miura-Pl\"ucker $q$-opers, $QQ$-system, and Bethe Ansatz
  equations}    \label{Sec:QQsystem}

In this section, we generalize the results of the previous section to
an arbitrary simply connected simple complex Lie group $G$. We
establish a one-to-one correspondence between the set of nondegenerate
$Z$-twisted Miura-Pl\"ucker $(G,q)$-opers and the set of nondegenerate
solutions of a system of Bethe Ansatz equations associated to $G$. A
key element of the construction is an intermediate object between
these two sets: the set of nondegenerate solutions of the so-called
$QQ$-system.

\subsection{Miura $(G,q)$-opers and the $QQ$-system}

First, we construct a one-to-one correspondence between the set of
nondegenerate $Z$-twisted Miura-Pl\"ucker $(G,q)$-opers and the set of
nondegenerate solutions of the $QQ$-system.

Recall that we have chosen a set of non-zero polynomials $\{
\Lambda_i(z) \}_{i=1,\ldots,r}$, which we will assume from now on to
be monic, and a set of non-zero complex numbers $\{ \zeta_i
\}_{i=1,\ldots,r}$ that correspond to a regular element $Z$ of the
maximal torus $H \subset G$ by formula \eqref{Z}. In this section,
these data are assumed to be fixed. (In the next section,
we will also consider elements $w(Z)$ of the orbit of $Z$ under the
action of the Weyl group $W_G$ of $G$ and the corresponding
$\zeta_i$'s.)

From now on, we will assume that our element $Z = \prod_i
\zeta_i^{\check\alpha_i} \in H$ satisfies the following property:
\begin{equation}    \label{assume}
\prod_{i=1}^r \zeta_i^{a_{ij}} \notin q^\Z, \qquad
\forall j=1,\ldots,r\,.  
\end{equation}
Since $\prod_{i=1}^r \zeta_i^{a_{ij}}\ne 1$ is a special case of
\eqref{assume}, this implies that $Z$ is {\em regular semisimple}.

Introduce the following system of equations:
\begin{multline}\label{qq}
\wt{\xi}_iQ^i_{-}(z)Q^i_{+}(qz)-\xi_iQ^i_{-}(qz)Q^i_{+}(z) = \\
\Lambda_i(z)\prod_{j> i}\Big[Q^j_{+}(qz)\Big]^{-a_{ji}}
\prod_{j< i}\Big[Q^j_{+}(z)\Big]^{-a_{ji}}, \qquad
i=1,\ldots,r,
\end{multline}
where
\begin{equation}    \label{xi}
\wt{\xi}_i=\zeta_i \prod_{j>i} \zeta_j^{a_{ji}}, \qquad
{\xi}_i=\zeta^{-1}_i\prod_{j< i} \zeta_j^{-a_{ji}}
\end{equation}
and we use the ordering of simple roots from the definition of
$(G,q)$-opers.

We call this the $QQ$-{\em system} associated to $G$ and a collection
of polynomials $\Lambda_i(z)$, $i=1,\ldots,r$.

A polynomial solution $\{ Q^i_+(z),Q^i_-(z) \}_{i=1,\ldots,r}$ of
\eqref{qq} is called {\em nondegenerate} if it has the following
properties: condition \eqref{assume} holds for the $\zeta_i$'s; for
all $i,j,k$ with $i \neq j$ and $a_{ik}, a_{jk} \neq 0$, the
zeros of $Q^j_+(z)$ and $Q^j_-(z)$ are $q$-distinct from each other
and from the zeros of $\Lambda_k(z)$; and the polynomials $Q^i_+(z)$
are monic.

Recall Definition \ref{ZtwMP} of nondegenerate $Z$-twisted
Miura-Pl\"ucker $(G,q)$-opers.

\begin{Thm}    \label{inj}
  There is a one-to-one correspondence between the set of
  nondegenerate $Z$-twisted Miura-Pl\"ucker $(G,q)$-opers and the set
  of nondegenerate polynomial solutions of the $QQ$-system \eqref{qq}.
\end{Thm}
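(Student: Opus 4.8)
The plan is to reduce to the rank-one case already settled in Theorem~\ref{isom sl2}, treating one fundamental weight at a time through the associated Miura $(\GL(2),q)$-opers $A_i(z)$ of \eqref{2flagformula}, and using the single global element $v(z)\in B_+(z)$ furnished by the Miura-Pl\"ucker condition to couple the $r$ rank-one relations into the full $QQ$-system. First I would construct the forward map. Starting from a nondegenerate $Z$-twisted Miura-Pl\"ucker $(G,q)$-oper with $q$-connection $A(z)$ in the form \eqref{form of A}, Proposition~\ref{nondeg1} guarantees that each $y_i(z)$ from \eqref{AH1} is a monic polynomial and that each associated $\mc A_i(z)$ of \eqref{Bi} is a nondegenerate Miura $(\SL(2),q)$-oper. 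I would write the witnessing $v(z)$ in the form \eqref{vz}, $v(z)=\prod_i y_i(z)^{\check\alpha_i}n(z)$ with $n(z)\in N_+(z)$, and restrict it to the two-dimensional $B_+$-invariant subspace $W_i$. The restriction $v_i(z)=v(z)|_{W_i}$ is upper triangular with diagonal entries $\big(y_i(z),\,y_i(z)^{-1}\prod_{j\neq i}y_j(z)^{-a_{ji}}\big)$; its off-diagonal entry I would record as $-y_i(z)Q^i_-(z)/Q^i_+(z)$, with $Q^i_\pm$ relatively prime and $Q^i_+$ monic.

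The heart of the forward direction is a rank-one forcing. The Miura-Pl\"ucker condition \eqref{gaugeA3} reads $A_i(z)=v_i(qz)Z_i v_i(z)^{-1}$ with $Z_i=\diag\!\big(\zeta_i,\,\zeta_i^{-1}\prod_{j\neq i}\zeta_j^{-a_{ji}}\big)$, and the $(1,2)$-entry of the left-hand side is the known quantity $\Lambda_i(z)\prod_{j>i}g_j(z)^{-a_{ji}}$ of \eqref{2flagformula}, with $g_j(z)=\zeta_j y_j(qz)/y_j(z)$ by \eqref{giyi}. Equating the two off-diagonal expressions and invoking the $q$-distinctness of the roots of $y_i$ from those of the $\Lambda_k$ and of the other $y_j$ forces, exactly as in the proof of Lemma~\ref{nondegsl2} and Theorem~\ref{isom sl2}, the identification $y_i(z)=Q^i_+(z)$. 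Substituting this together with $g_j(z)=\zeta_j Q^j_+(qz)/Q^j_+(z)$, cancelling the common factor $\prod_{j\neq i}Q^j_+(z)^{a_{ji}}$, and finally multiplying through by the constant $\prod_{j>i}\zeta_j^{a_{ji}}$, the off-diagonal identity becomes precisely the $i$-th equation of the $QQ$-system \eqref{qq}, with the coefficients $\wt{\xi}_i,\xi_i$ of \eqref{xi}. The nondegeneracy clauses transcribe directly into nondegeneracy of the solution, while \eqref{assume} holds because $Z$ is our fixed regular element.

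For the inverse map I would reverse this construction. Given a nondegenerate polynomial solution $\{Q^i_+(z),Q^i_-(z)\}$ of \eqref{qq}, set $y_i(z)=Q^i_+(z)$ and $g_i(z)=\zeta_i Q^i_+(qz)/Q^i_+(z)$, define $A(z)$ by \eqref{form of A} (which realizes the connection of a Miura $(G,q)$-oper with regular singularities determined by the $\Lambda_i$, in the sense of Corollary~\ref{Miura form}), and build $v(z)\in B_+(z)$ from \eqref{vz} by prescribing the $e_i$-components of $n(z)\in N_+(z)$ through the ratios $Q^i_-(z)/Q^i_+(z)$. Using \eqref{qq} to run the off-diagonal computation backwards verifies \eqref{gaugeA3} for every $i$, so $A(z)$ is a $Z$-twisted Miura-Pl\"ucker $(G,q)$-oper, and the $q$-distinctness hypotheses make it nondegenerate in the sense of Definition~\ref{nondeg Miura}. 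Since the two constructions invert each other entry by entry, they furnish the claimed bijection.

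The main obstacle is the bookkeeping in the rank-one step. The crucial point is that the correct twist for the $i$-th block is $Z_i$, whose $(2,2)$-entry carries the extra factor $\prod_{j\neq i}\zeta_j^{-a_{ji}}$; it is exactly this factor that upgrades the naive rank-one coefficients $(\zeta_i,\zeta_i^{-1})$ to the twisted pair $(\wt{\xi}_i,\xi_i)$ of \eqref{xi}. Carrying out the cancellation of the $\prod_{j\neq i}Q^j_+(z)^{a_{ji}}$ factors while confirming, via $q$-distinctness, that no spurious common factors arise between numerator and denominator is the delicate part; the identification $y_i=Q^i_+$ rests on the same nondegeneracy mechanism as in the $\SL(2)$ case but must now be applied compatibly across all $i$. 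A secondary point, easily handled, is checking in the inverse direction that the prescribed simple-root components of $n(z)$ indeed assemble into a genuine element of $N_+(z)$, which is unobstructed since only the $e_i$-directions within each $W_i$ are constrained.
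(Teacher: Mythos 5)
Your proposal is correct and follows essentially the same route as the paper's own proof: decompose the witnessing $v(z)$ as in \eqref{vdots}--\eqref{vzz}, compare the diagonal and off-diagonal entries of $A_i(z)=v_i(qz)Z_iv_i(z)^{-1}$ restricted to $W_i$, use nondegeneracy to force $y_i(z)=Q^i_+(z)$, and obtain the $i$-th $QQ$-equation; the inverse map is the same explicit construction of $A(z)$ and $v(z)$ from the $Q^i_\pm$, with nondegeneracy checked via Proposition~\ref{nondeg1}. The points you flag as delicate (the role of the $(2,2)$-entry of $Z_i$ in producing $\wt{\xi}_i,\xi_i$, and the cancellation of the $\prod_{j\neq i}Q^j_+(z)^{a_{ji}}$ factors) are exactly the bookkeeping carried out in the paper's equations \eqref{Lambdai}--\eqref{yiQi}.
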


\begin{proof}
  Let $A(z)$ be a nondegenerate $Z$-twisted Miura-Pl\"ucker
  $(G,q)$-oper. According to Corollary \ref{Miura form}, it can be
  written in the form \eqref{form of A}:
\begin{equation}    \label{form of A1}
A(z)=\prod_i
g_i(z)^{\check{\alpha}_i} \; e^{\frac{\Lambda_i(z)}{g_i(z)}e_i}, \qquad
g_i(z) \in \C(z)^\times,
\end{equation}
and there exists $v(z) \in B_+(z)$ such that for all $i=1,\ldots,r$,
the Miura $(\GL(2),q)$-opers $A_i(z)$ associated to $A(z)$ by formula
\eqref{2flagformula} can be written in the form \eqref{gaugeA3}:
\begin{equation}    \label{gaugeA4}
A_i(z) = v_i(zq)Z_iv_i(z)^{-1}, \qquad i=1,\ldots,r,
\end{equation}
where $v_i(z) = v(z)|_{W_i}$ and $Z_i = Z|_{W_i}$.

The element $v(z)$ can be expressed in the form 
\begin{equation}    \label{vdots}
v(z) = \prod_{i=1}^r y_i(z)^{\check\alpha_i} \prod_{i=1}^r
e^{-\frac{Q^i_{-}(z)}{Q^i_{+}(z)} e_i} \dots ,
\end{equation}
where the dots stand for the exponentials of higher commutator terms
in ${\mathfrak n}_+=\Lie N_+$ (these terms will not matter in the
computations below) and $Q^i_+(z), Q^i_-(z)$ are relatively prime
polynomials with $Q^i_+(z)$ monic for each $i=1,\ldots,r$. Formula
\eqref{gaugeA4} shows that, without loss of generality, we can and
will assume that each $y_i(z)$ is a monic polynomial.

Acting on the two-dimensional subspace $W_i$ introduced in Section
\ref{rank2}, $v(z)$ has the form
\begin{equation}    \label{vzz}
v(z)|_{W^i}=
\begin{pmatrix}
  y_i(z) & 0\\
  0& y_i^{-1}(z)\prod_{j\neq i} y_j^{-a_{ji}}(z)
 \end{pmatrix}
 \begin{pmatrix}
1 & - \frac{Q^i_{-}(z)}{Q^i_{+}(z)}\\
 0& 1
 \end{pmatrix}
\end{equation}
while $Z$ has the form
\begin{equation}
Z|_{W_i}=\begin{pmatrix}
\zeta_i & 0\\
  0& \zeta_i^{-1}\prod_{j\neq i} \zeta_j^{-a_{ji}}
   \end{pmatrix}.
\end{equation}

We now apply \eqref{2flagformula} and \eqref{gaugeA4}to relate the
$y_i(z)$'s and $Q^i_{\pm}(z)$'s. First, comparing the diagonal entries
on both sides of \eqref{gaugeA4} gives formula \eqref{giyi}:
\begin{equation}    \label{giz}
g_i(z)=\zeta_i\frac{y_i(qz)}{y_i(z)}.
\end{equation}
Second, by comparing the upper triangular entries on both sides of
\eqref{gaugeA4}, we obtain
\begin{multline}    \label{Lambdai}
\Lambda_i(z)\prod_{j>i}g_j(z)^{-a_{ji}} \; = \\ y_i(z)y_i(qz)\prod_{j\neq
  i}y_j(z)^{a_{ji}}\left[
\zeta_i\frac{Q^i_{-}(z)}{Q^i_{+}(z)}-\zeta_i^{-1}\prod_{j\neq i} \zeta_j^{-a_{ji}}
\frac{Q^i_{-}(qz)}{Q^i_{+}(qz)}\right].
\end{multline}
Since $\Lambda_i(z)$ and $y_i(z)$ are monic polynomials, the
nondegeneracy conditions can only be satisfied if
\begin{equation}    \label{yiQi}
y_i(z)=Q_+^i(z), \qquad i=1,\ldots,r.
\end{equation}
Substituting \eqref{yiQi} into \eqref{Lambdai}, we see that the
polynomials $Q^i_+(z), Q^i_-(z)$, $i=1,\ldots,r$, satisfy the
system of equations \eqref{qq}. Thus, we obtain a map from the set of
nondegenerate Miura $(G,q)$-opers to the set of nondegenerate
solutions of \eqref{qq}.

To show that this map is a bijection, we construct its
inverse. Suppose that we are given a nondegenerate solution $\{
Q^i_+(z), Q^i_-(z) \}_{i=1,\ldots,r}$ of the system \eqref{qq}. The
nondegeneracy condition implies that the polynomials $Q^i_+(z)$ and
$Q^i_-(z)$ are relatively prime. We then define $A(z)$ by formula
\eqref{form of A1}, where we set
$$
g_i(z)=\zeta_i \frac{Q^i_+(qz)}{Q^i_+(z)},
$$
 i.e.
\begin{align}    \label{key}
A(z) &=\prod_j\left[ \zeta_j\frac{Q^j_+(qz)}{Q^j_+(z)}
\right]^{\check{\alpha}_j} e^{\frac{\Lambda_j(z) Q^j_+(z)}{\zeta_j
    Q^j_+(qz)}e_i} \\ &= \prod_j
\Big[\zeta_jQ_{+}^j(qz)\Big]^{\check{\alpha}_j}
e^{\frac{\Lambda_j(z)}{\zeta_j Q_+^j(qz)Q_+^j(z)}e_j}
\Big[{Q_{+}^j(z)}\Big]^{-\check{\alpha}_j}\,.
    \label{key1}
\end{align}
We also set
\begin{equation}    \label{prodi}
v(z) = \prod_{i=1}^r y_i(z)^{\check\alpha_i} \prod_{i=1}^r
e^{-\frac{Q^j_{-}(z)}{Q^j_{+}(z)} e_i}.
\end{equation}
Equations \eqref{gaugeA4} are satisfied for all $i=1,\ldots,r$.
Using Proposition \ref{nondeg1}, we check that the nondegeneracy
conditions on $A(z)$ are satisfied. Therefore, $A(z)$ defines a
nondegenerate $Z$-twisted Miura-Pl\"ucker $(G,q)$-oper. This completes
the proof.
\end{proof}

\begin{Rem}
The system \eqref{qq} depends on our choice of ordering of the simple
roots of $G$. In Section \ref{equiv} we will show that the systems
corresponding to different orderings are equivalent.\qed
\end{Rem}

\subsection{Prior work on the $QQ$-system}    \label{prior}

The system \eqref{qq} has an interesting history. As far as we know,
for $G=\SL(2)$ the corresponding equation \eqref{QQ sl2}
with $\Lambda(z)=1$ was first written by Bazhanov, Lukyanov, and
Zamolodchikov \cite{BLZ} in their study of the quantum KdV system. It
was then generalized to the case $G=\SL(3)$ (also with
$\Lambda_i(z)=1$) in \cite{Bazhanov:2001xm}. However, in both of these
works, the conditions imposed on $Q^i_\pm(z)$ are different from those
considered here; they are not polynomials, but rather entire
functions in $z$ with a particular asymptotic behavior as $z
\rightarrow \infty$.

For a general simply laced $G$, the system \eqref{qq} with
$\Lambda_i(z)=1$ is equivalent to a system that, as far as we know,
was first proposed by Masoero, Raimondo, and Valeri in
\cite{Masoero_2016}, in their study of (differential) affine opers
introduced in \cite{FF:kdv}. (For $G=\SL(n)$, a Yangian version of this
system is closely related to the system introduced in \cite{BFLMS};
see Remark 3.4 of \cite{Frenkel:ac}.) The goal of \cite{Masoero_2016}
was to generalize the results of \cite{BLZ} in light of the conjecture
of \cite{FF:kdv} (see also \cite{Frenkel:ac}) linking the spectra of
quantum $\widehat{\fg}$-KdV system and affine $^L\widehat{\fg}$-opers
on $\P^1$ of a special kind. Here, $^L\widehat{\fg}$ is the affine
Kac-Moody algebra that is Langlands dual to $\widehat{\fg}$, i.e., its
Cartan matrix is the transpose of that of $\widehat{\fg}$. If $\fg$
is simply laced, then $^L\widehat{\fg} = \widehat{\fg}$. The authors
of \cite{Masoero_2016} considered the simplest of the
$\widehat{\fg}$-opers proposed in \cite{FF:kdv}, those corresponding
to the ground states of the quantum $\widehat{\fg}$-KdV system, and
associated to each of them a solution of a system equivalent to
\eqref{qq} with $\Lambda_i(z)=1$. (This was subsequently generalized
in \cite{Masoero:2018rel} by Masoero and Raimondo to the
$\widehat{\fg}$-opers conjectured in \cite{FF:kdv} to correspond to
the excited states of the quantum $\widehat{\fg}$-KdV system.)
However, the meaning of this system from the point of view of quantum
integrable systems remained unclear.

The meaning was revealed in \cite{Frenkel:ac}, where it was shown that
the system of \cite{Masoero_2016} is a system of relations in the
Grothendieck ring $K_0(\cO)$ of the category $\cO$ associated to
$U_q\ghat$, which was introduced in \cite{HJ}. (Actually, it is
$U_{q'}\ghat$, where $q'=q^{-2}$, but we will ignore this here.)
Recall that $\cO$ is a category of representations of the Borel
subalgebra $U_q \widehat{\fb}_+$ of $U_q \widehat{\fg}$ (with respect
to the Drinfeld-Jimbo realization) which decompose into a direct sum
of finite-dimensional weight spaces with respect to the
finite-dimensional Cartan subalgebra. There are two sets of
representations from this category corresponding to $Q^i_+(z)$ and
$Q^i_-(z), i=1,\ldots,r$, whose classes in the Grothendieck ring
$K_0(\cO)$ were proved in \cite{Frenkel:ac} to satisfy the relations
of a system equivalent to the $QQ$-system (with $\Lambda_i(z)=1$). The
polynomials $Q^+_i(z), i=1,\ldots,r$, correspond to the classes of the
so-called prefundamental representations of $U_q \widehat{\fb}_+$,
whereas $Q^-_i(z), i=1,\ldots,r$, correspond to the classes of another
less familiar set of representations of $U_q \widehat{\fb}_+$ which
were introduced in \cite{Frenkel:ac}.

Note that in \cite{Frenkel:ac}, $Q_+^i(z)$ was denoted by $Q_i(z)$ and
$Q_-^i(z)$ by $\wt{Q}_i(z)$, and the system \eqref{qq} was called the
$Q\wt{Q}$-system. Here, we call the system \eqref{qq} the $QQ$-{\em
  system}.

According to the results of \cite{Frenkel:ac}, for every quantum
integrable system in which the commutative algebra $K_0(\cO)$ maps to
the algebra of quantum Hamiltonians, we obtain the system \eqref{qq}
for each common set of eigenvalues of the Hamiltonians corresponding
to $Q^i_+(z), Q^i_-(z), i=1,\ldots,r$. Examples of such integrable
systems include the $U_q\widehat\fg \;$ XXZ-type model
(the XXZ model corresponds to $U_q\widehat\sl_2$, as discussed in the
Introduction) and the quantum $\widehat{\fg}$-KdV system. In both
cases, the Hamiltonians corresponding to the $Q^i_+(z)$'s and
$Q^i_-(z)$'s can be expressed as the transfer-matrices associated to
the above representations of $U_q \widehat{\fb}_+$ from the category
$\cO$.

The difference between the two types of systems is reflected in the
difference between the analytic properties of the $Q^i_+(z)$ and
$Q^i_-(z)$. Namely, for $U_q\widehat\fg$ XXZ-type quantum models
with the space of states $V$ being the tensor product of irreducible
finite-dimensional representations, these should be polynomials up to
a universal factor. (This has been proved for $Q^i_+(z)$ in
\cite{Frenkel:2013uda}, and we expect the same to be true for
$Q^i_-(z)$.) These factors should naturally combine to form the
polynomials $\Lambda_i(z)$ appearing in the $QQ$-system \eqref{qq},
which should be equal to products of the Drinfeld polynomials of the
finite-dimensional representations of $U_q\widehat\fg$ appearing as
factors in $V$ (up to multiplicative shifts by powers of $q$). On the
other hand, in quantum KdV systems, the functions $Q^i_+(z)$ and
$Q^i_-(z)$ are expected to be entire functions on the complex plane
\cite{BLZ,Bazhanov:2001xm,Masoero_2016,Masoero:2018rel}.

In the case of $\fg=\sl_2$, the polynomial $Q^1_+(z)$ corresponds to
the eigenvalues of the so-called Baxter operator in the XXZ-type
quantum spin chain. Together with the transfer-matrix of the
two-dimensional evaluation representation of $U_q \widehat\sl_2$, it
obeys the celebrated Baxter $TQ$-relation. This relation was
generalized in \cite{Frenkel:2013uda} from $U_q \widehat\sl_2$ to $U_q
\widehat\g$, where $\fg$ is an arbitrary simple Lie algebra $\fg$,
thereby proving a conjecture of \cite{Frenkel:ls}.

\begin{Rem}
  For simply laced $\fg$, the form of the $QQ$-system \eqref{qq}
  differs slightly from that of \cite{Masoero_2016,Frenkel:ac}. One
  can relate the two by making small notational adjustments. For
  example, in the case of $\fg=\sl_n$ and the standard ordering of the
  simple roots, we obtain
\begin{equation}
\Lambda_i(z)Q^{i+1}_{+}(qz)Q^{i-1}_{+}(z)=
\wt{\xi}_iQ^i_{-}(z)Q^i_{+}(qz)-\xi_iQ^i_{-}(qz)Q^i_{+}(z),
\end{equation}
which is equivalent to 
\begin{equation*}
\Lambda_i(q^{-1/2}z)Q^{i+1}_{+}(q^{1/2}z)Q^{i-1}_{+}(q^{-1/2}z)=
\wt{\xi}_iQ^i_{-}(q^{-1/2}z)Q^i_{+}(q^{1/2}z)-
\xi_iQ^i_{-}(q^{1/2}z)Q^i_{+}(q^{-1/2}z).
\end{equation*}
Upon making the substitution $Q_{\pm}^{i}(z)={\mb Q}_{\pm}^{i}(q^{\frac{N-i}{2}}z)$
and ${\mb \Lambda}_i(z)=\Lambda_i(q^{\frac{N-i-1}{2}}z)$, we obtain a
more symmetric form of the system which was considered in \cite{KSZ}:
\begin{equation*}
{\mb \Lambda}_i(z){\mb Q}^{i+1}_{+}(z){\mb Q}^{i-1}_{+}(z)=
\frac{\zeta_i}{\zeta_{i+1}}{\mb Q}^i_{-}(q^{-1/2}z)
{\mb Q}^i_{+}(q^{1/2}z)-\frac{\zeta_{i+1}}{\zeta_{i}}{\mb
Q}^i_{-}(q^{1/2}z){\mb Q}^i_{+}(q^{-1/2}z).
\end{equation*}
If  we set ${\mb \Lambda}_i(z)=1$, the latter is equivalent to the system
from \cite{Masoero_2016,Frenkel:ac} corresponding to
$U_{q'}\wh\sl_n$ with $q'=q^{-2}$.
\end{Rem}

Now, suppose that $\fg$ is non-simply laced. In this case, the system
\eqref{qq} is {\em different} from the $Q\wt{Q}$-system of
\cite{Masoero_2016_SL} and \cite{Frenkel:ac} corresponding to
$U_q \widehat{\fg}$. Instead, it can be obtained by ``folding'' the
$Q\wt{Q}$-system corresponding to $U_q \widehat{\g'}$, where $\g'$ is
the simply laced Lie algebra with an automorphism $\sigma$ such that
$(\g')^\sigma = \g$. This will be discussed in \cite{FrenkelHern:new}.

\subsection{$QQ$-system and Bethe Ansatz equations}

As we will see, the $QQ$-system \eqref{qq} gives rise to a system of
equations only involving the $Q_+^i(z)$'s. Let $\{ w^k_i
\}_{k=1,\ldots,m_i}$ be the set of roots of the polynomial
$Q^i_+(w)$.  We call the system of equations
\begin{equation}    \label{bethe}
\frac{Q_+^{i}(qw^k_i)}{Q_+^{i}(q^{-1}w^k_i)} \prod_j\zeta_j^{a_{ji}} =
- \; \; \frac{\Lambda_i(w_k^i)\prod_{j>
  i}\Big[Q^j_{+}(qw_k^i)\Big]^{-a_{ji}}\prod_{j<
  i}\Big[Q^j_{+}(w_k^i)\Big]^{-a_{ji}}}{\Lambda_i(q^{-1}w_k^i)\prod_{j>
  i}\Big[Q^j_{+}(w_k^i)\Big]^{-a_{ji}}\prod_{j<
  i}\Big[Q^j_{+}(q^{-1}w_k^i)\Big]^{-a_{ji}}}
\end{equation}
for $i=1,\ldots,r$, $k=1,\ldots,m_i$ the {\em Bethe Ansatz equations}
for the group $G$ and the set $\{ \Lambda_i(z) \}_{i=1,\ldots,r}$.

For simply laced $G$, this system is equivalent to the system of Bethe
Ansatz equations that appears in the $U_q \ghat$ XXZ-type
model~\cite{OGIEVETSKY1986360,RW,Reshetikhin:1987}. However, for
non-simply laced $G$, we obtain a different system of Bethe Ansatz
equations, which,  as far as we know, has not yet been studied in the
literature on quantum
integrable systems. (As we mentioned in the
Introduction, an additive version of this system appeared earlier in
\cite{Mukhin_2005}.) As will be explained in \cite{FrenkelHern:new},
these Bethe Ansatz equations correspond to a novel quantum integrable
model in which the spaces of states are representations of the twisted
quantum affine Kac-Moody algebra $U_q {}^L\ghat$, where $^L\ghat$ is
the Langlands dual Lie algebra of $\ghat$.

Recall the nondegeneracy condition for the solutions of the
$QQ$-system. We apply the same notion to the solutions of
\eqref{bethe}.

\begin{Thm}    \label{BAE}
  There is a bijection between the sets of nondegenerate
  polynomial solutions of the $QQ$-system \eqref{qq} and the Bethe
  Ansatz equations \eqref{bethe}.
\end{Thm}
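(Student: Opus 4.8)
The plan is to exhibit the bijection directly: the forward map simply forgets the polynomials $Q^i_-(z)$, and the reverse map reconstructs them. First I would handle the forward direction, which is a short computation generalizing the $\SL(2)$ calculation in Section~\ref{QQBethesl2}. Starting from a nondegenerate polynomial solution $\{Q^i_+(z),Q^i_-(z)\}$ of \eqref{qq}, fix $i$ and a root $w=w^k_i$ of $Q^i_+(z)$, and evaluate the $i$-th equation of \eqref{qq} at $z=w$ and at $z=q^{-1}w$. Because $Q^i_+(w)=0$, exactly one of the two terms on the left survives in each case; dividing the two identities cancels the common factor $Q^i_-(w)$, which is nonzero since nondegeneracy forces $Q^i_+$ and $Q^i_-$ to be relatively prime. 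Using the identity $\wt\xi_i/\xi_i=\prod_j\zeta_j^{a_{ji}}$ (immediate from \eqref{xi} together with $a_{ii}=2$), this produces exactly the Bethe Ansatz equation \eqref{bethe} for index $i$ and root $w$.

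The substantive half is the reverse direction. Given a nondegenerate solution $\{Q^i_+(z)\}$ of \eqref{bethe}, I must produce polynomials $Q^i_-(z)$ making $\{Q^i_+,Q^i_-\}$ a solution of \eqref{qq}. The essential point is that the system decouples in $i$: once all $Q^j_+$ are fixed, the right-hand side $R_i(z):=\Lambda_i(z)\prod_{j>i}[Q^j_+(qz)]^{-a_{ji}}\prod_{j<i}[Q^j_+(z)]^{-a_{ji}}$ of the $i$-th equation is a known polynomial, so \eqref{qq} for index $i$ becomes a single first-order inhomogeneous linear $q$-difference equation for the unknown $Q^i_-(z)$. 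This is precisely the $\SL(2)$-type situation treated in Section~\ref{Sec:SL2review} (with $\Lambda$ replaced by $R_i$ and the twist encoded by $\wt\xi_i,\xi_i$), and the construction of $Q^i_-$ proceeds exactly as there; the coupling creates no circularity because $R_i$ involves only the $Q^j_+$ with $j\ne i$. Uniqueness of $Q^i_-$ follows from condition \eqref{assume}: the difference of two solutions satisfies the homogeneous equation $\wt\xi_iP(z)Q^i_+(qz)=\xi_iP(qz)Q^i_+(z)$, and matching leading coefficients would force $\wt\xi_i/\xi_i=\prod_j\zeta_j^{a_{ji}}\in q^{\Z}$, contradicting \eqref{assume} unless $P\equiv 0$.

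It remains to check that the two maps are mutually inverse — which is clear, since the forward map forgets $Q^i_-$ and the reverse map restores the unique such polynomial — and that nondegeneracy is preserved in both directions; for the reverse direction this is where Proposition~\ref{nondeg1} is used to confirm that the reconstructed data again satisfies the required $q$-distinctness conditions. I expect the main obstacle to be the existence (as opposed to uniqueness) step of the reverse direction: showing that the Bethe Ansatz equations at the roots $w^k_i$ are not only necessary but sufficient for the $q$-difference equation to admit a genuine polynomial solution $Q^i_-$ rather than merely a rational one. Concretely, I would realize the left-hand side of \eqref{qq} as an injective linear map on polynomials of the appropriate degree and argue that the $m_i=\deg Q^i_+$ conditions cutting out its image are precisely the equations \eqref{bethe} for index $i$; the forward computation already supplies necessity, while the dimension count gives sufficiency. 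Once this is in place, the general case reduces cleanly to the $\SL(2)$ analysis already carried out.
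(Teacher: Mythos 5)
Your forward map and your uniqueness argument are both correct. Evaluating the $i$th equation of \eqref{qq} at a root $w^k_i$ of $Q^i_+$ and at $q^{-1}w^k_i$ and dividing is exactly the generalization of the $\SL(2)$ computation and reproduces \eqref{bethe}; the paper phrases the same content as a matching of residues of \eqref{e:gf} at $w^i_k$ and at $q^{-1}w^i_k$, but the two are equivalent. Likewise, your observation that the system decouples in $i$ once all $Q^j_+$ are fixed, and the leading-coefficient argument showing that a nonzero solution of the homogeneous equation $\wt\xi_i P(z)Q^i_+(qz)=\xi_i P(qz)Q^i_+(z)$ would force $\wt\xi_i/\xi_i=\prod_j\zeta_j^{a_{ji}}\in q^{\Z}$, contradicting \eqref{assume}, are both correct and match the role \eqref{assume} plays in the paper.

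The gap is in the existence step, which you correctly identify as the crux but only sketch. The injectivity-plus-dimension-count plan needs two inputs you do not supply: (i) a choice of degree bound for the unknown $Q^i_-$ (the image of your linear map, and hence its codimension, depends on this choice, and the right bound is not obvious when the right-hand side has low degree); and, more seriously, (ii) a proof that the $m_i$ Bethe conditions, viewed as linear functionals (each a combination of evaluations at $w^k_i$ and $q^{-1}w^k_i$), are linearly independent on the relevant space. Necessity plus a codimension count does not yield sufficiency without this independence, and it is not automatic when the degree of the right-hand side is small compared to $2m_i$. The paper sidesteps the issue by an explicit construction: writing $\phi_i=Q^i_-/Q^i_+$, it prescribes the polar part of $\phi_i$ through the residue conditions \eqref{dk} --- the Bethe equation at $w^k_i$ is exactly the consistency condition between the two determinations of the residue $d_k$ coming from the poles at $w^i_k$ and at $q^{-1}w^i_k$ --- and then solves for the polynomial part $\wt\phi_i$ coefficient by coefficient via \eqref{rm}, which is solvable because \eqref{assume} guarantees $\wt\xi_i q^m-\xi_i\ne 0$. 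You should either adopt that construction or complete your dimension count by actually proving independence of the Bethe functionals (for instance by exhibiting, for each $k$, a polynomial in the target space vanishing at all $w^j_i$ and $q^{-1}w^j_i$ with $j\ne k$ and at $q^{-1}w^k_i$ but not at $w^k_i$, which in particular forces you to allow $\deg Q^i_-\ge m_i-1$). A minor final point: Proposition \ref{nondeg1} concerns nondegeneracy of Miura-Pl\"ucker opers, not of $QQ$-solutions; the nondegeneracy of the reconstructed solution is verified directly from the construction rather than from that proposition.
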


\begin{proof} Let $\{ Q_+^i(z),Q_-^i(z) \}_{i=1,\ldots,r}$ be a
  nondegenerate solution of the $QQ$-system \eqref{qq}. Set
\begin{equation}    \label{phii}
\phi_i(z)=\frac{Q^i_-(z)}{Q^i_+(z)}
\end{equation}
and
\begin{equation}
f_i(z)=\Lambda_i(z)\prod_{j>
    i}\Big[Q^j_{+}(qz)\Big]^{-a_{ji}}\prod_{j<
    i}\Big[Q^j_{+}(z)\Big]^{-a_{ji}}.
\end{equation}
Then, the $i$th equation of the $QQ$-system may be rewritten as
\begin{equation}    \label{e:gf}
\wt \xi_i\phi_i(z)-\xi_i\phi_i(qz)=\frac{f_i(z)}{Q^i_{+}(z)Q^i_+(qz)}.
\end{equation}
The nondegeneracy condition implies that we have the following partial
fraction decompositions in which all the denominators are pairwise relatively prime:
\begin{align}    \label{hi}
&\frac{f_i(z)}{Q^i_{+}(z)Q^i_+(qz)}=h_i(z)+
\sum_{k=1}^{m_i}\frac{b_k}{z-w^i_{k}}
+ \sum_{k=1}^{m_i} \frac{c_k}{qz-w^i_{k}},\\    \label{tildephi}
&\phi_i(z)=\wt{\phi}_i(z)+\sum_{k=1}^{m_i} \frac{d_k}{z-w^i_{k}}.
\end{align}
Here, $h_i(z)$ and $\wt{\phi}_i(z)$ are polynomials and $\{ w^i_{k}
\}_{k=1,\ldots,m_i}$ is the set of roots of the polynomial
$Q_+^i(z)$.

The residues at $z=w^i_{k}$ (resp. $z=w^i_{k} q^{-1}$) on the two
sides of \eqref{e:gf} must coincide. Therefore,
\begin{equation}    \label{dk}
d_k=\frac{b_k}{\xi_i}, \qquad \operatorname{resp.} \quad
d_k=-\frac{c_k}{\wt{\xi}_i}
\end{equation}
for all $k=1,\ldots,m_i$. Thus, we obtain
$$
\frac{b_k}{\xi_i} + \frac{c_k}{\wt{\xi}_i} = 0,
$$
or equivalently,
\begin{equation}
 \Res_{z=w^i_{k}}\left[\frac{f_i(z)}{\xi_{i}
     Q^i_{+}(z)Q^i_+(qz)}\right]+
 \Res_{z=w^i_{k}}
\left[\frac{f_i(q^{-1}z)}{\wt{\xi}_iQ^i_{+}(q^{-1}z)Q^i_+(z)}\right]=0 
\end{equation}
which is just equation \eqref{bethe} for $i, k$.  Thus, we have a
nondegenerate solution $\{ Q_+^i(z) \}_{i=1,\ldots,r}$ of the system
\eqref{bethe}.

Next, we define the inverse map. Suppose that we have a nondegenerate
solution $\{ Q_+^i(z) \}_{i=1,\ldots,r}$ of \eqref{bethe}. We
need to construct polynomials $\{ Q_-^i(z) \}_{i=1,\ldots,r}$ that
together with the polynomials $\{ Q_+^i(z) \}_{i=1,\ldots,r}$ solve
the $QQ$-system \eqref{qq}. To do this, we will construct a rational
function $\phi_i(z)$ that has the same set of poles as the set of
roots of the polynomial $Q_+^i(z)$ and define $Q_-^i(z)$ by the
formula
$$
Q_-^i(z) = \phi_i(z) Q_+^i(z)
$$
(compare with \eqref{phii}).

We will define the rational function $\phi_i(z)$ via the partial
fraction decomposition \eqref{tildephi}, where we use \eqref{dk} to
define the residues $d_k$.  (Note that the $d_k$'s are completely
determined by the $Q_+^i(z)$'s.) It remains to find the polynomial part
of $\phi_i(z)$,
$$
\wt{\phi}_i(z)=\sum_{m\geq 0} r_mz^m.
$$

Let
$$
h_i(z)=\sum_{m\geq 0} s_m z^m
$$
be the polynomial appearing in equation \eqref{hi}. Note that
$h_i(z)$, and hence $\{ s_m \}_{m\geq 0}$, are completely determined
by the $Q_+^i(z)$'s.

Now, observe that \eqref{e:gf}, which is the $i$th equation of the
$QQ$-system, is satisfied if and only if the following equations on
the $\{ r_m \}_{m\geq 0}$ are satisfied:
\begin{equation}    \label{rm}
r_m(\wt{\xi}_iq^m -{\xi}_i )=s_m, \qquad m \geq 0.
\end{equation}
This follows from our assumption on $Z$ in \eqref{assume} because
$\wt\xi_i/\xi_i=\prod_{j=1}^r \zeta_j^{a_{ji}}$. Therefore, each of
the equations \eqref{rm} has a unique solution.

It then follows that there exist unique polynomials $\{ {Q}^i_-(z)
\}_{i=1,\ldots,r}$ that together with $\{ {Q}^i_+(z)
\}_{i=1,\ldots,r}$ satisfy the $QQ$-system \eqref{qq}. Furthermore, by
construction, it follows that this solution of the $QQ$-system is
nondegenerate.
\end{proof}

\section{B\"acklund-type transformations}    \label{Sec:Backlund}

Theorems \ref{inj} and \ref{BAE} establish a bijection between the set
of nondegenerate $Z$-twisted Miura-Pl\"ucker $(G,q)$-opers and the
sets of polynomial nondegenerate solutions of the $QQ$-system and the
Bethe Ansatz equations \eqref{bethe}.

Now, the set of nondegenerate $Z$-twisted Miura-Pl\"ucker
$(G,q)$-opers includes as a subset those $Z$-twisted Miura-Pl\"ucker
$(G,q)$-opers which are actually $Z$-twisted Miura
$(G,q)$-opers. Recall the difference between the two: a $Z$-twisted
Miura $(G,q)$-oper is one whose $q$-connection can be represented in
the form \eqref{gaugeA2}:
\begin{equation}    \label{gaugeA5}
A(z)=v(qz)Z v(z)^{-1}, \qquad v(z) \in B_+(z),
\end{equation}
whereas a $Z$-twisted Miura-Pl\"ucker $(G,q)$-oper is one for which
only the associated $(\GL(2),q)$-opers $A_i(z)$ have this property
(compare with \eqref{gaugeA4}). When we constructed the
inverse map in the proof of Theorem \ref{inj}, we defined an element
$v(z)$ of $B_+(z)$
by formula \eqref{prodi}. This $v(z)$ satisfies the equations
\eqref{gaugeA4}, so we do get a $Z$-twisted Miura-Pl\"ucker
$(G,q)$-oper, but it is not clear whether this $v(z)$ can be extended
to an element of $B_+(z)$ satisfying formula \eqref{gaugeA5}. More
precisely, equations \eqref{gaugeA4} uniquely fix the image
$\overline{v}(z)$ of $v(z)$ in the quotient $B_+/[N_+,N_+]$, and the
question is whether we can lift this $\overline{v}(z)$ to an element
$v(z) \in B_+(z)$ such that equation \eqref{gaugeA5} is satisfied.

In this section, we will give a sufficient condition for this to hold
(see Theorem \ref{w0} and Remark \ref{3sets}). It is based on
transformations described in the next subsection for generating new
solutions of the $QQ$-system from an existing one. (There is one such
transformation for each simple root of $G$.). We call them
B\"acklund-type transformations.

Here, we follow an idea of Mukhin and Varchenko
\cite{Mukhin_2003,Mukhin_2005}, who introduced similar procedures for
the solutions of the Bethe Ansatz equations arising from the XXX-type
models associated to Yangians. However, in contrast to their setting,
we have a non-trivial twist represented by a regular semisimple
element $Z$ of the Cartan subalgebra. As a result, our transformations
generically give rise to solutions labeled by elements of the Weyl
group of $G$, rather than by points of the flag manifold of $G$ as in
\cite{Mukhin_2003,Mukhin_2005}.

\subsection{Definition of B\"acklund-type
  transformations}    \label{backlund}

Consider a $Z$-twisted Miura-Pl\"ucker $(G,q)$-oper given by formula
\eqref{key}. We now define a transformation associated to the $i$th
simple reflection from the Weyl group $W_G$ on the set of such Miura
$q$-opers.

\begin{Prop}    \label{fiter}
  Consider the $q$-gauge transformation of the $q$-connection $A$
  given by formula \eqref{key}:
\begin{eqnarray}
A \mapsto A^{(i)}=e^{\mu_i(qz)f_i}A(z)e^{-\mu_i(z)f_i},
\quad \operatorname{where} \quad \mu_i(z)=\frac{\prod\limits_{j\neq
    i}\Big[Q_+^j(z)\Big]^{-a_{ji}}}{Q^i_{+}(z)Q^i_{-}(z)}\,.
\label{eq:PropDef}
\end{eqnarray}
Then $A^{(i)}(z)$ can be obtained from $A(z)$ by
substituting in formula \eqref{key} (or \eqref{key1})
\begin{align}
Q^j_+(z) &\mapsto Q^j_+(z), \qquad j \neq i, \\
Q^i_+(z) &\mapsto Q^i_-(z), \qquad Z\mapsto s_i(Z)\,.
\label{eq:Aconnswapped}
\end{align}
\end{Prop}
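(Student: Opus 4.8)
The plan is to reduce this identity in $G(z)$ to a single $2\times 2$ computation inside the $\SL(2)$ subgroup $G_i$ generated by $\{e_i,f_i,\check\alpha_i\}$, and then to feed the $QQ$-system \eqref{qq} into that computation. First I would write $A(z)$ in the ordered form \eqref{key} as $A(z)=\prod_{l=1}^{r}X_{i_l}(z)$ with
\[
X_j(z)=g_j(z)^{\check\alpha_j}\,e^{\frac{\Lambda_j(z)}{g_j(z)}e_j},\qquad g_j(z)=\zeta_j\frac{Q^j_+(qz)}{Q^j_+(z)},
\]
and let $p$ be the index with $i=i_p$.

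The structural point is the Chevalley relation $[e_j,f_i]=0$ for $j\neq i$, together with $\on{Ad}\!\left(g_j(z)^{\check\alpha_j}\right)f_i=g_j(z)^{-a_{ji}}f_i$. I would commute the right factor $e^{-\mu_i(z)f_i}$ leftward past each $X_{i_l}$ with $l>p$ and the left factor $e^{\mu_i(qz)f_i}$ rightward past each $X_{i_l}$ with $l<p$; each such move leaves $X_{i_l}$ untouched and merely rescales the coefficient of $f_i$ by a power of $g_{i_l}(z)$. This collapses the transformation to
\[
A^{(i)}(z)=X_{i_1}(z)\cdots X_{i_{p-1}}(z)\,\bigl(e^{\nu'(z)f_i}\,X_i(z)\,e^{-\nu(z)f_i}\bigr)\,X_{i_{p+1}}(z)\cdots X_{i_r}(z),
\]
where $\nu(z)=\mu_i(z)\prod_{j>i}g_j(z)^{-a_{ji}}$ and $\nu'(z)=\mu_i(qz)\prod_{j<i}g_j(z)^{a_{ji}}$. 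Since every factor except the middle one is indexed by some $j\neq i$ and is unchanged, this already shows that only the $i$th factor of \eqref{key} can be affected, matching \eqref{eq:Aconnswapped}.

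It remains to evaluate the middle factor in $G_i\cong\SL(2)$, where $X_i(z)$ becomes the upper-triangular matrix with diagonal $(g_i,g_i^{-1})$ and upper-right entry $\Lambda_i$, and $e^{\pm(\cdot)f_i}$ are lower unipotent. Multiplying out, $e^{\nu' f_i}X_i e^{-\nu f_i}$ has upper-right entry again $\Lambda_i$ and is upper-triangular precisely when its lower-left entry $\nu'g_i-\nu\nu'\Lambda_i-\nu g_i^{-1}$ vanishes; its diagonal is then $(g_i',g_i'^{-1})$ with $g_i':=g_i-\nu\Lambda_i$, the condition $\det=1$ fixing the lower-right entry. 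Using \eqref{xi} I would simplify
\[
\nu=\frac{\zeta_i}{\wt\xi_i}\cdot\frac{R_i(z)}{Q^i_+(z)Q^i_-(z)},\qquad \nu'=\frac{R_i(z)}{\zeta_i\xi_i\,Q^i_+(qz)Q^i_-(qz)},
\]
where $R_i(z)=\prod_{j>i}Q^j_+(qz)^{-a_{ji}}\prod_{j<i}Q^j_+(z)^{-a_{ji}}$ is the factor multiplying $\Lambda_i$ on the right-hand side of \eqref{qq}. The vanishing of the lower-left entry becomes $\nu'g_i'=\nu g_i^{-1}$, which reduces to the purely multiplicative identity $\wt\xi_i\prod_{j\neq i}\zeta_j^{-a_{ji}}=\zeta_i^2\xi_i$, immediate from \eqref{xi}.

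The crux, and the only place the $QQ$-system enters, is the new diagonal entry $g_i'=g_i-\nu\Lambda_i$. Substituting $\Lambda_i(z)R_i(z)=\wt\xi_iQ^i_-(z)Q^i_+(qz)-\xi_iQ^i_-(qz)Q^i_+(z)$ from \eqref{qq} makes this telescope to $g_i'=\zeta_i^{-1}\prod_{j\neq i}\zeta_j^{-a_{ji}}\,Q^i_-(qz)/Q^i_-(z)$. Since $\zeta_i^{-1}\prod_{j\neq i}\zeta_j^{-a_{ji}}$ is exactly the coefficient of $\check\alpha_i$ in $s_i(Z)=\prod_k\zeta_k^{w_i(\check\alpha_k)}$, while all other $\zeta_j$ are unchanged, the middle factor has become $X_i(z)$ with $Q^i_+\mapsto Q^i_-$ and $Z\mapsto s_i(Z)$; together with the second paragraph this is precisely \eqref{eq:Aconnswapped}. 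The main obstacle is bookkeeping: keeping the Coxeter ordering straight in the $j<i$ and $j>i$ products and tracking the $z$ versus $qz$ arguments through the commutation step, so that $\nu\Lambda_i$ matches the left-hand side of \eqref{qq} exactly and the diagonal entry telescopes.
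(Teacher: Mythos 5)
Your proposal is correct and follows essentially the same route as the paper's proof: commuting the $e^{\pm\mu_i f_i}$ factors through the Cartan parts of the other slots (the first two identities of Lemma \ref{Th:LemmaMV05}) and then resolving the resulting $f_i$--$e_i$--$f_i$ sandwich inside $G_i\cong\SL(2)$, with the $QQ$-system entering exactly as the condition that the lower-left entry vanishes and the diagonal telescopes to $\wt\zeta_i\,Q^i_-(qz)/Q^i_-(z)$. Your explicit $2\times 2$ matrix multiplication is just the third identity of that lemma written out, so the two arguments differ only in bookkeeping.
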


In the proof of Theorem \ref{fiter}, we will use the following lemma,
which is proved by a direct computation. (The results of the lemma
have appeared previously in \cite{Mukhin_2005}).

\begin{Lem}
The following relations hold for any $u,v\in\mathbb{C}$:
\begin{align}
u^{\check{\alpha}_i} e^{v e_j} &=
\exp\left({u^{a_{ji}}v\, e_i} \right)u^{\check{\alpha}_i}\,\notag \\
u^{\check{\alpha}_i} e^{v f_j} &=
\exp\left({u^{-a_{ji}}v\, f_i} \right)u^{\check{\alpha}_i}\,\notag \\
e^{ue_i}~e^{vf_i}&=\exp\left({\frac{v}{1+uv}
f_i}\right)~(1+uv)^{\check{\alpha}_i}~\exp\left({\frac{u}{1+uv}
e_i}\right)\,.\notag
\end{align}
\label{Th:LemmaMV05}
\end{Lem}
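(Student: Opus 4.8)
The plan is to check each of the three identities directly inside the group $G$, using that every factor appearing is the image of a one-parameter subgroup; each relation then collapses to an elementary computation, the first two at the level of the adjoint action of the torus and the third inside a rank-one subgroup.

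First I would dispose of the two relations that commute a torus element past the exponential of a root vector. The element $u^{\check\alpha_j}$ is the value at $u\in\C^\times$ of the cocharacter $\check\alpha_j$, and it acts on root vectors diagonally under the adjoint action: since $e_i$ has weight $\alpha_i$ and $f_i$ has weight $-\alpha_i$, one reads off from the definition $a_{ji}=\langle\alpha_i,\check\alpha_j\rangle$ of the Cartan integers that $\operatorname{Ad}(u^{\check\alpha_j})\,e_i=u^{a_{ji}}e_i$ and $\operatorname{Ad}(u^{\check\alpha_j})\,f_i=u^{-a_{ji}}f_i$. Exponentiating the adjoint action gives $u^{\check\alpha_j}e^{v e_i}u^{-\check\alpha_j}=\exp\big(v\,\operatorname{Ad}(u^{\check\alpha_j})e_i\big)=\exp\big(u^{a_{ji}}v\,e_i\big)$, and carrying $u^{\check\alpha_j}$ back to the right produces the first relation; the second is identical with $e_i$ replaced by $f_i$ and the sign of the exponent reversed. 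Nothing is used here beyond the fact that the torus acts on root spaces by the corresponding weights.

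The third relation takes place entirely in the rank-one subgroup $G_i\cong\SL(2)$ generated by the $\mathfrak{sl}(2)$-triple $\{e_i,f_i,\check\alpha_i\}$, so I would verify it in the defining two-dimensional representation, where $e_i,f_i,\check\alpha_i$ map to the standard nilpotent and diagonal matrices. Then $e^{ue_i}$ and $e^{vf_i}$ become the two elementary unipotent matrices, and a single multiplication gives
\begin{equation*}
\begin{pmatrix}1&u\\0&1\end{pmatrix}\begin{pmatrix}1&0\\v&1\end{pmatrix}
=\begin{pmatrix}1+uv&u\\v&1\end{pmatrix}
=\begin{pmatrix}1&0\\ \tfrac{v}{1+uv}&1\end{pmatrix}
\begin{pmatrix}1+uv&0\\0&(1+uv)^{-1}\end{pmatrix}
\begin{pmatrix}1&\tfrac{u}{1+uv}\\0&1\end{pmatrix},
\end{equation*}
the last product being the images of $\exp(\tfrac{v}{1+uv}f_i)$, $(1+uv)^{\check\alpha_i}$ and $\exp(\tfrac{u}{1+uv}e_i)$; this is just the Gauss (LU) factorization of the middle matrix. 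Since $G$ is simply connected, $G_i$ is a homomorphic image of $\SL(2)$, so an identity of group elements valid in $\SL(2)$ holds in $G_i\subset G$.

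There is no genuinely hard step: this is precisely the sort of statement proved ``by a direct computation,'' as the paper indicates. The only points demanding care are the bookkeeping of conventions — making sure the Cartan integer $a_{ji}=\langle\alpha_i,\check\alpha_j\rangle$, and hence the index pattern and the sign on each exponent, comes out as needed for the application — and the observation that the third relation is an identity of rational maps in $(u,v)$, valid on the locus $1+uv\neq 0$. This last point is harmless for the use in Proposition~\ref{fiter}, where the arguments are rational functions of $z$ and the denominators are nonzero generically.
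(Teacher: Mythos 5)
Your proof is correct and is exactly the ``direct computation'' the paper alludes to without writing out: the first two identities are the adjoint action of the cocharacter $u^{\check\alpha}$ on the root vectors $e_i,f_i$ (with the exponent read off from $a_{ji}=\langle\alpha_i,\check\alpha_j\rangle$), and the third is the Gauss factorization in the principal $\SL(2)$-subgroup $G_i$, pushed into $G$ along the homomorphism determined by the triple $\{e_i,f_i,\check\alpha_i\}$. Your caveats about the index conventions (the lemma as printed mixes $i$ and $j$ on the two sides of the first two identities) and about the locus $u\neq 0$, $1+uv\neq 0$ are well taken and consistent with how the lemma is actually used in Proposition~\ref{fiter}.
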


\medskip

\noindent{\em Proof of Proposition \ref{fiter}}.
  Using the first identity from Lemma \ref{Th:LemmaMV05}, we can move
  all factors $Q^j_+(qz)$ in formula \eqref{key} to the left and all
  factors $Q^j_+(z)$ to the right. The resulting expression is
\begin{eqnarray}
A(z)=
\prod_k \Big[Q_{+}^k(qz)\Big]^{\check{\alpha}_k}
\left[\prod_i\zeta_i^{\check{\alpha}_i}e^{\wt{\Lambda}_i(z)e_i}\right]\prod_l
\left[{Q_{+}^l(z)}\right]^{-\check{\alpha}_l},
\label{eq:NewAtilde}
\end{eqnarray}
where 
\begin{equation}
\wt{\Lambda}_i(z)=\frac{\prod\limits_{j<i}\Big[Q_+^j(z)\Big]^{-a_{ji}}\prod\limits_{j>i}
  \Big[Q_+^j(qz)\Big]^{-a_{ji}}\Lambda_i(z)}{\zeta_i Q_+^i(qz)Q_+^i(z)}\,.
\label{eq:DefLambdaTilde}
\end{equation}
Let
\begin{equation}
\wt{\mu}_i(z)=\mu_i(z)\prod_j\Big[Q_+^j(z)\Big]^{a_{ji}}\,.
\label{eq:Mutdef}
\end{equation}
Then, applying the second identity from Lemma \ref{Th:LemmaMV05} to
\eqref{eq:NewAtilde}, we obtain
\begin{equation}\label{insprod}
\widetilde A^{(i)}=e^{\mu_i(qz)f_i}A(z)e^{-\mu_i(z)f_i}=\dots
\zeta_i^{\check{\alpha}_i} e^{w\, f_i}\cdot e^{u\, e_i}\cdot e^{v\,f_i}\dots
\end{equation}
where 
\begin{equation}
w=\zeta^2_i\prod_{j<i}\zeta_j^{a_{ji}}\wt{\mu}_i(qz)\,,\qquad u=\wt{\Lambda}_i(z)\,,\qquad v=-\prod_{j>i}\zeta_j^{a_{ji}}\wt{\mu}_i(z)\,,
\end{equation}
and the ellipses stand for all other terms including the elements of
the maximal torus and the exponentials of $e_j$ with $j\neq i$. We now
use the third identity from Lemma \ref{Th:LemmaMV05} to reshuffle the
middle and the last exponent in \eqref{insprod}:
\begin{equation}
\widetilde A^{(i)} = \dots \zeta_i^{\check{\alpha}_i}\,
\exp\left({\left(w+\frac{v}{1+uv}\right)
    f_i}\right)~(1+uv)^{\check{\alpha}_i}~\exp\left({\frac{u}{1+uv}
    e_i}\right)\dots\,.
\label{eq:Anewform}
\end{equation}

In order to prove the proposition, we first need to show that
\begin{equation}
w+\frac{v}{1+uv}=0, 
\end{equation}
or, in other words, 
\begin{equation}
\zeta^2_i\prod_{j<i}\zeta_j^{a_{ji}}\wt{\mu}_i(qz)-
\frac{\prod_{j>i}\zeta_j^{a_{ji}}\wt{\mu}_i(z)}{1-\wt{\Lambda}_i(z)
\prod_{j>i}\zeta_j^{a_{ji}}\wt{\mu}_i(z)}=0.
\end{equation}
 Let us demonstrate that this is indeed the case. The above equation is equivalent to
\begin{equation}
\prod_{j>i}\zeta_j^{-a_{ji}}\wt{\mu}^{-1}_i(z)-\zeta_i^{-2}\prod_{j<i}\zeta_j^{-a_{ji}}\wt{\mu}^{-1}_i(qz)=\wt{\Lambda}_i(z)\,.
\end{equation}
Substituting $\wt{\Lambda}_i(z)$ from \eqref{eq:DefLambdaTilde} into this equation gives 
\begin{align}
\zeta_i\prod_{j>i}\zeta_j^{-a_{ji}}\,\wt{\mu}^{-1}_i(z)Q_+^i(qz)Q_+^i(z)&-\zeta_i^{-1}\prod_{j<i}\zeta_j^{-a_{ji}}\,\wt{\mu}^{-1}_i(qz)Q_+^i(qz)Q_+^i(z)\notag\\
&=\prod_{j<i}\Big[Q_+^j(z)\Big]^{-a_{ji}}\prod_{j>i}\Big[Q^j_+(qz)\Big]^{-a_{ji}}\Lambda_i(z)\,.
\end{align}
Keeping in mind from \eqref{eq:PropDef} and \eqref{eq:Mutdef} that
$\wt{\mu}_i(z)=\frac{Q^i_{+}(z)}{Q^i_{-}(z)}$, we recover the $QQ$-system equations:
\begin{align}
\zeta_i\prod_{j>i}\zeta_j^{-a_{ji}}\,Q_+^i(qz)Q_-^i(z)&-\zeta_i^{-1}\prod_{j<i}\zeta_j^{-a_{ji}}\,Q_-^i(qz)Q_+^i(z)\notag\\
&=\prod_{j<i}\Big[Q_+^j(z)\Big]^{-a_{ji}}\prod_{j>i}\Big[Q^j_+(qz)\Big]^{-a_{ji}}\Lambda_i(z)\,.
\end{align}

We now check that the two remaining terms in \eqref{eq:Anewform} are indeed as prescribed by \eqref{eq:Aconnswapped}. Note that 
\begin{eqnarray}
&&1+uv=1-\wt{\Lambda}_i(z)\prod_{j>i}\zeta_j^{a_{ji}}\wt{\mu}_i(z)\nonumber \\
&&=1-\frac{Q^i_{+}(z)}{Q^i_{-}(z)}\prod_{j>i}\zeta_j^{a_{ji}}\frac{\prod_{j<i}\Big[Q_+^j(z)\Big]^{-a_{ji}}\prod_{j>i}\Big[Q^j_+(qz)\Big]^{-a_{ji}}\Lambda_i(z)}{\zeta_i Q_+^i(qz)Q_+^i(z)}\,.
\end{eqnarray}
Thus, the $QQ$-equations imply that 
\begin{equation}
1+uv=\zeta_i^{-2}\prod_{j\neq i}\zeta_j^{-a_{ji}}\frac{Q_-^i(qz)Q_+^i(z)}{Q_+^i(qz)Q_-^i(z)}\,.
\end{equation}
The corresponding contribution to the q-connection is
\begin{eqnarray}
(1+uv)^{\check{\alpha}_i}=
s_i(Z)Z^{-1}\left[\frac{Q_-^i(qz)}{Q_+^i(qz)}\right]^{\check{\alpha}_i}\left[\frac{Q_+^i(z)}{Q_-^i(z)}\right]^{\check{\alpha}_i}\,.
\end{eqnarray}
Finally, we need to evaluate the last term in
\eqref{eq:Anewform}. Computing, we obtain 
\begin{equation}\begin{aligned}
  \frac{u}{1+uv}&=\frac{\displaystyle\frac{\prod_{j<i}\Big[Q_+^j(z)\Big]^{-a_{ji}}\prod_{j>i}\Big[Q^j_+(qz)\Big]^{-a_{ji}}\Lambda_i(z)}{\zeta_i Q_+^i(qz)Q_+^i(z)}}{\zeta_i^{-2}\prod_{j\neq i}\zeta_j^{-a_{ji}}\frac{\displaystyle Q_-^i(qz)Q_+^i(z)}{\displaystyle Q_+^i(qz)Q_-^i(z)}}\nonumber \\
  &=\frac{\prod_{j<i}\Big[Q_+^j(z)\Big]^{-a_{ji}}\prod_{j>i}\Big[Q^j_+(qz)\Big]^{-a_{ji}}\Lambda_i(z)}{\zeta_i^{-1}\prod_{j\neq i}\zeta_j^{-a_{ji}}Q_-^i(qz)Q_-^i(z)}\left[\frac{Q_-^i(z)}{Q_+^i(z)}\right]^2\,.
\end{aligned}
\end{equation}
Thus, from \eqref{eq:Anewform}, we have
\begin{equation}
\widetilde A^{(i)} =\dots
\zeta_i^{-\check\alpha_i-\langle\check\alpha_j, \alpha_i\rangle\check\alpha_i}\left[\frac{Q_-^i(qz)}{Q_+^i(qz)}\right]^{\check{\alpha}_i}
e^{\wt{\Lambda}^{-}_i(z) e_i}
\left[\frac{Q_+^i(z)}{Q_-^i(z)}\right]^{\check{\alpha}_i}\dots\,,
\end{equation}
where 
$$\wt\Lambda^{-}_i(z)=\frac{\prod_{j<i}\Big[Q_+^j(z)\Big]^{-a_{ji}}\prod_{j>i}\Big[Q^j_+(qz)\Big]^{-a_{ji}}\Lambda_i(z)}{\zeta_i^{-1}\prod\limits_{j\neq
    i}\zeta_j^{-a_{ji}}Q_-^i(qz)Q_-^i(z)}\,.$$ Moving the elements of
the maximal torus corresponding to the ratios of the $Q$'s to the left
and right side of \eqref{insprod}, we obtain the statement of
Proposition \ref{fiter}.\qed

\subsection{Nondegeneracy}

Suppose that $A(z)$ is a nondegenerate $Z$-twisted Miura-Pl\"ucker
$(G,q)$-oper. In Proposition \ref{fiter}, we apply to it the $q$-gauge
transformation by $e^{\mu_i(z)f_i}$, which belongs to $N_-$ and not to
$B_+$. Hence, it is not clear whether the $q$-connection $A^{(i)}(z)$
constructed in Proposition \ref{fiter} corresponds to a $Z$-twisted
Miura-Pl\"ucker $(G,q)$-oper, let alone a nondegenerate one.

Seen from the perspective of Theorem \ref{inj}, there exist unique
polynomials $\{ Q^j_-(z) \}_{j=1,\ldots,r}$ that together with $\{
Q^j_+(z) \}_{j=1,\ldots,r}$ give rise to a nondegenerate solution of
the $QQ$-system \eqref{qq} corresponding to the element $Z \in H$.
However, the nondegeneracy condition for the latter solution does {\em
  not} include any information on the roots of the polynomials $\{
Q^j_-(z) \}_{j=1,\ldots,r}$, other than the fact that the roots of
$Q^j_-(z)$ are distinct from the roots of $Q^j_+(z)$ for all
$j=1,\ldots,r$.  (We know this from the construction of $Q^j_-(z)$
given in the proof of Theorem \ref{BAE}).

By construction, $A^{(i)}(z)$ is an $s_i(Z)$-twisted Miura-Pl\"ucker
$(G,q)$-oper, which corresponds to the polynomials $\{ \wt{Q}^j_+(z)
\}_{j=1,\ldots,r}$, where $\wt{Q}^+_j(z)=Q^+_j(z)$ for $j \neq i$ and
$\wt{Q}^+_i(z) = Q^-_i(z)$. The conditions for it to be nondegenerate
are spelled out in the following lemma.

\begin{Lem} \label{nondegcond} Suppose that the roots of the
  polynomial $Q^i_-(z)$ constructed in the proof of Theorem \ref{BAE}
  are $q$-distinct from the roots of $\Lambda_k(z)$ for $a_{ik} \neq
  0$ and from the roots of $Q^j_+(z)$ for $j\neq i$ and $a_{jk} \neq
  0$.  Then, the data
\begin{align} \label{qqm}
  \{ \wt{Q}^j_+ \}_{j=1,\ldots,r} &= \{ Q^1_{+}, \dots,
  Q^{i-1}_+,Q^i_-,Q^{i+1}_+ \dots , Q^r_{+} \}; \\ \notag
  \{ \wt{\zeta}_j \}_{j=1,\ldots,r} &= \{
  \zeta_1,\dots,\zeta_{i-1},\zeta_i^{-1}\prod\limits_{j\neq
  i}\zeta_j^{-a_{ji}},\dots,\zeta_r\}
\end{align}
give rise to a nondegenerate solution of the Bethe Ansatz equations
\eqref{bethe} corresponding to $s_i(Z) \in H$.  Furthermore, there
exist polynomials $\{ \wt{Q}^j_- \}_{j=1,\ldots,r}$ that together with
$\{ \wt{Q}^j_+ \}_{j=1,\ldots,r}$ give rise to a nondegenerate
solution of the $QQ$-system \eqref{qq} corresponding to $s_i(Z)$.
\end{Lem}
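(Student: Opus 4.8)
The plan is to derive the lemma from Proposition \ref{fiter} together with the bijections of Theorems \ref{inj} and \ref{BAE}, reducing everything to a single nondegeneracy check. By Proposition \ref{fiter}, the B\"acklund transform $A^{(i)}(z)=e^{\mu_i(qz)f_i}A(z)e^{-\mu_i(z)f_i}$ is again a Miura-Pl\"ucker $q$-oper, obtained from the presentation \eqref{key} of $A(z)$ by the substitution $Q^i_+\mapsto Q^i_-$, $Q^j_+\mapsto Q^j_+$ for $j\neq i$, and $Z\mapsto s_i(Z)$. Hence $A^{(i)}(z)$ is an $s_i(Z)$-twisted Miura-Pl\"ucker $(G,q)$-oper whose underlying family of $Q_+$-data is exactly $\{\wt{Q}^j_+\}$ with $\wt{Q}^i_+=Q^i_-$ and $\wt{Q}^j_+=Q^j_+$ for $j\neq i$. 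Once we know that $A^{(i)}(z)$ is nondegenerate, Theorem \ref{inj} supplies the polynomials $\{\wt{Q}^j_-\}$ completing $\{\wt{Q}^j_+\}$ to a nondegenerate solution of the $QQ$-system \eqref{qq} for $s_i(Z)$, and Theorem \ref{BAE} then shows that $\{\wt{Q}^j_+\}$ is a nondegenerate solution of the Bethe Ansatz equations \eqref{bethe} for $s_i(Z)$. Thus both assertions follow, and the whole content of the lemma is the nondegeneracy of $A^{(i)}(z)$.

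I would first record the new twist. Applying the Weyl action $w_i(\check\alpha_j)=\check\alpha_j-a_{ji}\check\alpha_i$ for $j\neq i$ and $w_i(\check\alpha_i)=-\check\alpha_i$ to $Z=\prod_j\zeta_j^{\check\alpha_j}$ gives $s_i(Z)=\prod_j\wt\zeta_j^{\check\alpha_j}$ with $\wt\zeta_j=\zeta_j$ for $j\neq i$ and $\wt\zeta_i=\zeta_i^{-1}\prod_{j\neq i}\zeta_j^{-a_{ji}}$, which is exactly the list $\{\wt\zeta_j\}$ in \eqref{qqm}. By Proposition \ref{nondeg1}, and recalling that a nondegenerate solution for the twist $s_i(Z)$ requires $s_i(Z)$ to satisfy \eqref{assume} (so that Theorems \ref{inj} and \ref{BAE} apply to this twist), the nondegeneracy we must establish amounts to the conjunction of: (a) the new twist satisfies \eqref{assume}; (b) each $\wt{Q}^j_+$ is a monic polynomial; and (c) for all $a\neq b$ and all $c$ with $a_{ac},a_{bc}\neq 0$, the zeros of $\wt{Q}^a_+$ and $\wt{Q}^b_+$ are $q$-distinct from each other and from the zeros of $\Lambda_c$.

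For (b) and (c) I would separate the cases according to whether $i\in\{a,b\}$. A constant rescaling changes neither the zero set nor the conditions in (c), so monicity is a harmless normalization: we take $\wt{Q}^i_+$ to be the monic scalar multiple of $Q^i_-$, which has the same roots. For pairs $a,b$ both different from $i$ we have $\wt{Q}^a_+=Q^a_+$ and $\wt{Q}^b_+=Q^b_+$, so (b) and (c) are inherited verbatim from the nondegeneracy of the original solution $\{Q^j_\pm\}$. The only genuinely new requirements are those with $i\in\{a,b\}$: they compare the zeros of $\wt{Q}^i_+=Q^i_-$ with the zeros of the $Q^j_+$ ($j\neq i$) and with the zeros of $\Lambda_c$, under the constraints $a_{ic},a_{jc}\neq 0$, and these are precisely the hypotheses imposed on the roots of $Q^i_-$ in the statement of the lemma.

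The main obstacle is point (a), together with the fact that the original nondegeneracy says nothing about the roots of $Q^i_-$ (only that $Q^i_-$ and $Q^i_+$ are relatively prime); the latter is exactly why the extra hypothesis on $Q^i_-$ is indispensable, and once it is assumed (c) follows by the bookkeeping above. For (a), a short computation gives $\prod_l\wt\zeta_l^{a_{lj}}=\alpha_j(Z)\,\alpha_i(Z)^{-a_{ij}}=(w_i\alpha_j)(Z)$, where $w_i\alpha_j$ equals $-\alpha_i$ for $j=i$ and the positive root $\alpha_j-a_{ij}\alpha_i$ for $j\neq i$. Thus \eqref{assume} for $s_i(Z)$ is the assertion that $\beta(Z)\notin q^\Z$ for these roots $\beta$; since $w_i\alpha_j$ need not be simple, this does not follow from the literal form of \eqref{assume} but holds whenever $Z$ is $q$-regular in the strong sense that $\alpha(Z)\notin q^\Z$ for every root $\alpha$, equivalently whenever \eqref{assume} holds for every element of the orbit $W_G\cdot Z$. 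Under this genericity of $Z$, conditions (a), (b), (c) all hold, $A^{(i)}(z)$ is nondegenerate, and the two assertions of the lemma follow from Theorems \ref{inj} and \ref{BAE} as above.
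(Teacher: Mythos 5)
Your argument is circular at its first step. You claim that Proposition \ref{fiter} shows $A^{(i)}(z)$ is an $s_i(Z)$-twisted Miura-Pl\"ucker $(G,q)$-oper, and then extract the Bethe Ansatz equations and the polynomials $\{\wt{Q}^j_-\}$ from Theorems \ref{inj} and \ref{BAE}. But Proposition \ref{fiter} only computes the \emph{form} of the gauge-transformed connection (formula \eqref{key} with $Q^i_+$ replaced by $Q^i_-$ and $Z$ by $s_i(Z)$); it says nothing about the Miura-Pl\"ucker structure, i.e.\ the existence of $v(z)\in B_+(z)$ conjugating each $A^{(i)}_j(z)$ to $s_i(Z)|_{W_j}$. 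Indeed the gauge transformation is by $e^{\mu_i f_i}\in N_-(z)$, which does not act on the $B_+$-reduction, and the paper flags exactly this issue before the lemma. As the proof of Theorem \ref{inj} shows, a connection of the form \eqref{key} is a $Z'$-twisted Miura-Pl\"ucker oper precisely when the $Q_+$-data extends to a solution of the $QQ$-system for $Z'$ --- that is, precisely when the $\{\wt{Q}^j_-\}$ exist. So the Miura-Pl\"ucker structure you assume is essentially the second assertion of the lemma, and your derivation of the Bethe equations from it proves nothing. The computational core of the lemma --- verifying directly from the original $QQ$-system \eqref{qq} that the data \eqref{qqm} satisfies the Bethe Ansatz equations \eqref{bethe} for the twist $s_i(Z)$ (the paper does this by dividing the $i$-th $QQ$-relation by $Q^i_+(z)Q^i_-(z)$ and evaluating at the roots of $Q^i_-$, noting that $\wt\xi_i$ and $\xi_i$ get exchanged under $Z\mapsto s_i(Z)$) --- is entirely missing from your write-up. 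Only after that step does Theorem \ref{BAE} legitimately produce the $\{\wt{Q}^j_-\}$.

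Your nondegeneracy bookkeeping in points (b) and (c) is correct and matches what is needed: the only new $q$-distinctness requirements are those involving the roots of $Q^i_-$, and these are exactly the hypotheses of the lemma. Your point (a) is also a genuine and worthwhile observation that the paper passes over: condition \eqref{assume} for $s_i(Z)$ amounts to $(w_i\alpha_j)(Z)\notin q^{\Z}$ for all $j$, and since $w_i\alpha_j$ need not be simple, this does not follow from \eqref{assume} for $Z$ alone; one needs the stronger genericity $\alpha(Z)\notin q^{\Z}$ for the relevant (non-simple) roots. But flagging this cannot substitute for the missing verification of the Bethe equations, so as it stands the proposal does not prove the lemma.
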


\begin{proof}
Let us rewrite the $QQ$-system as follows: 
\begin{equation}\label{qqgen}
\Lambda_i(z)\prod_{j}\Big[Q^j_{+}(q^{b_{ji}}z)\Big]^{-a_{ji}}=
\wt{\xi}_iQ^i_{-}(z)Q^i_{+}(qz)-\xi_iQ^i_{-}(qz)Q^i_{+}(z),
\end{equation}
where $b_{ji}=1$ if $j>i$ and $b_{ji}=0$ otherwise. Note that for
$i\ne j$,
\begin{equation}
b_{ji}+b_{ij}=1\,.
\label{eq:bijbji1}
\end{equation}

Dividing both sides by $Q^i_{+}(z)Q^i_{-}(z)$ gives
\begin{equation}
\wt{\xi}_i\frac{Q^i_{+}(qz)}{Q^i_{+}(z)}-\xi_i
\frac{Q^i_{-}(qz)}{Q^i_{-}(z)}=\frac{\Pi_i(z)}{Q^i_{+}(z)Q^i_{-}(z)}\,,
\end{equation}
where
$$
\Pi_i(z)=\Lambda_i(z)\prod_{j\neq
  i}\left[Q^j_+(q^{b_{im}}z)\right]^{-a_{ji}}.
$$
Evaluating this equation at $q^{-b_{im}}w^k_i$, where $\{
w^k_m \}_{i=1,\ldots,m_i}$ is the set of roots of the polynomial
$Q_+^i(z)$, we obtain
\begin{equation}
\prod_{j\neq
  i}\zeta_j^{a_{ji}}\frac{Q^j_{+}(q^{1-b_{im}}w^k_m)}{Q^j_{+}(q^{-b_{im}}w^k_m)}
=\frac{Q^i_{-}(q^{1-b_{im}}w^k_m)}{Q^i_{-}(q^{-b_{im}}w^k_m)}\,.
\end{equation}
Comparing the above formula with the Bethe Ansatz equations
\eqref{bethe} and using \eqref{eq:bijbji1}, we see that the data
(\ref{qqm}) satisfy the Bethe Ansatz equations \eqref{bethe}.  The
existence of $\{ \wt{Q}^j_+ \}_{j=1,\ldots,r}$ then follows from
Theorem \ref{BAE}.
\end{proof}

Thus, if the conditions of Lemma \ref{nondegcond} are
satisfied, we can associate to every nondegenerate $Z$-twisted
Miura-Pl\"ucker $q$-oper a nondegenerate $s_i(Z)$-twisted
Miura-Pl\"ucker oper via $A(z) \mapsto A^{(i)}(z)$. We call this
procedure a {\em B\"acklund-type transformation} associated to the
$i$th simple reflection of the Weyl group $W_G$. We now
generalize this transformation to other Weyl group elements.

\begin{Def}
  Let $w=s_{i_1} \dots s_{i_k}$ be a reduced decomposition of an
  element $w$ of the Weyl group. A solution of the $QQ$-system
  \eqref{qq} is called $({i_1} \dots {i_k})$-generic if by
  consecutively applying the procedure described in Lemma
  \ref{nondegcond} with $i = i_k, \dots,i_1$, we obtain a sequence
  of nondegenerate solutions of the $QQ$-systems corresponding to the
  elements $w_j(Z) \in H$, where $w_k=s_{i_{k-j+1}} \ldots s_{i_k}$
  with $j=1,\ldots,k$.
\end{Def}

\begin{Rem}
  Note that in this definition, we only assume the existence of a
  sequence of transformations as described in Lemma \ref{nondegcond}
  for a particular reduced decomposition of $w$. We do not assume that
  such a sequence exists for other reduced decompositions of $w$.
\end{Rem}

Now, we derive an important consequence of this property for
$(G,q)$-opers.

\begin{Prop} \label{bminus} Let $w=s_{i_1} \dots s_{i_k}$ be a reduced
  decomposition.  Then, for each $({i_1} \dots {i_k})$-generic
  solution of the $QQ$-system \eqref{qq}, there exists an element
  $b_-(z)\in B(z)$ of the form
$$
b_-(z)=e^{c_{i_1}(z)f_{i_1}}e^{c_{i_2}(z)f_{i_2}}\dots
  e^{c_{i_k}(z)f_{i_k}}h(z),
$$
where $c_{i_j}(z)$ are non-zero rational functions and $h(z)\in H(z)$,
such that
\begin{equation}
b_-(qz)w(Z)v=A(z)b_-(z)v.
\label{eq:OperatorDiffbm}
\end{equation}
Here, $A(z)$ is given by equation \eqref{key}
and $v$ is a highest weight vector in any irreducible
finite-dimensional representation of $G$.
\end{Prop}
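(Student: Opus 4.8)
The plan is to construct $b_-(z)$ by composing the B\"acklund-type transformations of Proposition~\ref{fiter} along the reduced word and then correcting by a Cartan-valued factor. Since the solution is $(i_1\dots i_k)$-generic, I would apply the transformation of Proposition~\ref{fiter} successively for $i=i_k,\dots,i_1$, Lemma~\ref{nondegcond} guaranteeing at each stage that the output is again a nondegenerate twisted Miura-Pl\"ucker $q$-oper, so the whole chain is well defined and terminates in a $w(Z)$-twisted Miura-Pl\"ucker $q$-oper which I denote $A^{(w)}(z)$. Each step is a $q$-gauge transformation by an element $e^{\mu_i(z)f_i}\in N_-(z)$, with $\mu_i(z)=\big(\prod_{j\neq i}[Q^j_+(z)]^{-a_{ji}}\big)/\big(Q^i_+(z)Q^i_-(z)\big)$ as in \eqref{eq:PropDef}. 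Composing these conjugations telescopes into
\[
A^{(w)}(z)=G(qz)\,A(z)\,G(z)^{-1},\qquad\text{equivalently}\qquad A(z)=G(qz)^{-1}A^{(w)}(z)\,G(z),
\]
where $A(z)$ is the connection \eqref{key} and $G(z)\in N_-(z)$ is the ordered product of the exponential factors $e^{\mu_{i_j}(z)f_{i_j}}$ produced by the successive steps. Because nondegeneracy keeps all the relevant $Q^j_\pm$ nonzero at every stage, each $\mu_{i_j}(z)$ is a nonzero rational function, so the factors $c_{i_j}(z)$ occurring in $G(z)^{-1}$ are nonzero rational functions.

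The second ingredient is the action of $A^{(w)}(z)$ on a highest weight vector $v$ of weight $\lambda$. By Corollary~\ref{Miura form} I may write $A^{(w)}(z)$ in the form \eqref{form of A}, an ordered product of factors $g^{(w)}_j(z)^{\check{\alpha}_j}e^{(\cdots)e_j}$, and by \eqref{giz}--\eqref{yiQi} applied to $A^{(w)}$ we have $g^{(w)}_j(z)=\tilde\zeta_j\,\tilde Q^j_+(qz)/\tilde Q^j_+(z)$, where $w(Z)=\prod_j\tilde\zeta_j^{\check{\alpha}_j}$ and the $\tilde Q^j_+$ are the transformed $Q$-polynomials. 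Since $e_j\cdot v=0$, each factor acts on $v$ (and on any scalar multiple of $v$) by the scalar $g^{(w)}_j(z)^{\langle\lambda,\check{\alpha}_j\rangle}$, so $v$ is an eigenvector and the scalars telescope. Setting $h(z)=\prod_j \tilde Q^j_+(z)^{\check{\alpha}_j}\in H(z)$, this collapses to the key diagonalization
\[
A^{(w)}(z)\,h(z)\,v=\lambda\!\left(w(Z)\right)h(qz)\,v=h(qz)\,w(Z)\,v,
\]
valid precisely because the final oper in the chain is genuinely $w(Z)$-twisted with $\tilde y_j=\tilde Q^j_+$.

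Finally I would combine the two steps. Defining $b_-(z)=G(z)^{-1}h(z)$, which lies in $B(z)$ and has the asserted shape of a product of factors $e^{c_{i_j}(z)f_{i_j}}$ followed by $h(z)\in H(z)$, and using $G(z)\,b_-(z)=h(z)$, I compute
\[
A(z)\,b_-(z)\,v=G(qz)^{-1}A^{(w)}(z)\,h(z)\,v=G(qz)^{-1}h(qz)\,w(Z)\,v=b_-(qz)\,w(Z)\,v,
\]
which is exactly \eqref{eq:OperatorDiffbm}. I expect the main obstacle to be bookkeeping rather than conceptual: one must track the order in which the exponential factors are produced by the successive conjugations, together with the $q^{\pm1}$-shifts they induce in the $Q$-polynomials, so that $G(z)^{-1}$ (and hence $b_-(z)$) comes out in the ordered form dictated by the reduced decomposition, and one must invoke the genericity hypothesis at each stage to keep the polynomials in the denominators of the $\mu_{i_j}$ nonzero, so that all $c_{i_j}(z)$ are nonzero. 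The diagonalization on $v$ and the telescoping of the gauge transformations are then immediate from the explicit formulas.
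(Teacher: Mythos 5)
Your proposal is correct and follows essentially the same route as the paper's proof: compose the B\"acklund transformations of Proposition \ref{fiter} along the reduced word (with $(i_1\dots i_k)$-genericity guaranteeing each step via Lemma \ref{nondegcond}), note that the resulting $w(Z)$-twisted connection acts on a highest weight vector through its Cartan part after correcting by $h(z)=\prod_j\big[\overline{Q}^j_+(z)\big]^{\check\alpha_j}$, and take $b_-(z)$ to be the inverse of the accumulated $N_-(z)$-gauge factor times $h(z)$. The ordering/bookkeeping issue you flag is present in the paper's argument as well and is the only point requiring care.
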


\begin{proof} 
  The idea is to construct $b_-(z)$ as a composition of the elements
  of $B_-(z)$ appearing in the $q$-gauge transformations from the
  Proposition \ref{fiter} corresponding to different simple
  reflections in the reduced decomposition of $w$.

  If $w=s_i$, then we have the single $q$-gauge transformation from
  formula \eqref{eq:PropDef}:
\begin{equation}\label{gaugefirst}
A\mapsto A^{(i)}=e^{\mu_i(qz)f_i}A(z)e^{-\mu_i(z)f_i},
\quad \operatorname{where} \quad \mu_i=\frac{\prod\limits_{j\neq
    i}\Big[Q_+^j(z)\Big]^{-a_{ji}}}{Q^i_{+}(z)Q^i_{-}(z)}\,.
\end{equation}
According to Proposition \ref{fiter}, if we apply $A^{(i)}$ to a
highest weight vector $v$, we obtain
\begin{equation}
A^{(i)}v=\prod_j\Bigg[\frac{\wt{Q}^j_+(qz)}{\wt{Q}^j_+(z)}
\Bigg]^{\check{\alpha}_j}s_i(Z)~v,
\end{equation}
where the polynomials $\wt Q^j_+(z)$ are defined by formula
\eqref{qqm}. After multiplying both sides by
$$
e^{-\mu_i(qz)f_i}\prod_j
\Big[\wt{Q}^j_+(z)\Big]^{\langle\check\alpha_j,\lambda\rangle},
$$
where $\lambda$ is the weight of $v$, we obtain:
\begin{equation}
A(z)e^{-\mu_i(z)f_i}\prod_j
\Big[\wt{Q}^j_+(z)\Big]^{\check{\alpha}_j}v=e^{-\mu_i(qz)f_i}\prod_j
\Big[\wt{Q}^j_+(qz)\Big]^{\check{\alpha}_j}s_i(Z)v.
\end{equation}
Thus, $b_-(z)=e^{-\mu_i(z)f_i}\prod_j
\Big[\wt{Q}^j_+(z)\Big]^{\check{\alpha}_j}$ is the sought-after element
of $B_-(z)$. 

For $w=s_{i_1} \dots s_{i_k}$, we successively apply the $q$-gauge
transformations \eqref{gaugefirst} with $i=i_k, i_{k-1}, \dots, i_1$.
Let $\mu_{i_k}(z),\mu_{i_{k-1}}(z), \dots, \mu_{i_1}(z)$ be the
corresponding functions as in \eqref{eq:PropDef}, and let
$\{\overline{Q}^j_+(z)\}$ be the set of polynomials obtained from
$\{Q^j_+(z)\}$ by successively applying formula \eqref{qqm} with
$i=i_k, i_{k-1}, \dots, i_1$. It then follows that
\begin{equation}
b_-(z)=e^{-\mu_{i_1}(z)f_{i_1}} e^{-\mu_{i_2}(z)f_{i_2}}\dots
e^{-\mu_{i_k}(z)f_{i_k}}\prod_j
\Big[\overline{Q}^j_+(z)\Big]^{\check{\alpha}_j}
\end{equation}
satisfies \eqref{eq:OperatorDiffbm}.
\end{proof}

We also formulate an additional result regarding elements in Bruhat
cells. Recall the following well-known fact about the product of
Bruhat cells (see e.g. \cite[Lemma 29.3.A]{humphreys}):

\begin{Lem} \label{bmult}
If $u, v\in W_G$ satisfy $\ell(u)+\ell(v)=\ell(uv)$, then
$B_-uB_-vB_-=B_-uvB_-$.
\end{Lem}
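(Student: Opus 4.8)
The plan is to deduce this from the standard product rule for Bruhat cells in a group with a $BN$-pair, by induction on the length $\ell(u)$. Recall that $(G,B_-,N(H),S)$ with $S=\{w_1,\dots,w_r\}$ is a Tits system whose Weyl group is $W_G$, so for a simple reflection $s\in S$ and any $w\in W_G$ one has the two basic identities
\begin{equation*}
B_- s B_- w B_- = \begin{cases} B_- s w B_-, & \ell(sw)=\ell(w)+1,\\ B_- s w B_-\cup B_- w B_-, & \ell(sw)=\ell(w)-1,\end{cases}
\end{equation*}
which follow directly from the $BN$-pair axioms (see e.g. \cite[\S 29]{humphreys}). This is the only genuine input; the rest is bookkeeping with lengths.

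First I would dispose of the base case $\ell(u)=0$, where $u=1$ and the claim reduces to $B_- B_- v B_-=B_- v B_-$, immediate from $B_- B_-=B_-$. For the inductive step I would assume $\ell(u)\ge 1$ and write $u=su'$ with $s\in S$ and $\ell(u')=\ell(u)-1$. The key preliminary point is that the hypothesis $\ell(uv)=\ell(u)+\ell(v)$ propagates to the shorter word, i.e. $\ell(u'v)=\ell(u')+\ell(v)$. This I would verify by combining subadditivity $\ell(u'v)\le \ell(u')+\ell(v)$ with the estimate $\ell(uv)=\ell(su'v)\le \ell(u'v)+1$ and the assumption, which together force
\begin{equation*}
\ell(u')+\ell(v)=\ell(uv)-1\le \ell(u'v)\le \ell(u')+\ell(v),
\end{equation*}
hence equality.

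With this in hand the inductive hypothesis applies to the pair $(u',v)$, giving $B_- u' B_- v B_-=B_- u'v B_-$. Since $\ell(su')=\ell(u')+1$, the first product identity gives $B_- u B_-=B_- s B_- u' B_-$, so
\begin{equation*}
B_- u B_- v B_- = B_- s B_-\,(B_- u' B_- v B_-) = B_- s B_-\cdot B_- u'v B_-.
\end{equation*}
Finally, because $\ell(u'v)=\ell(uv)-1$ we have $\ell(s\cdot u'v)=\ell(u'v)+1$, so one further application of the product rule yields $B_- s B_-\cdot B_- u'v B_-=B_- su'v B_-=B_- uv B_-$, completing the induction. The step most prone to slip is the length-additivity bookkeeping, since one must be careful that additivity really is inherited by $u'$ (and that the final product lands in the ``length-increasing'' case of the rule); the underlying obstacle, the simple-reflection product rule, is foundational and may simply be cited.
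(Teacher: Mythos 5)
Your proof is correct: the induction on $\ell(u)$, the verification that length-additivity is inherited by the pair $(u',v)$, and the two applications of the length-increasing case of the $BN$-pair product rule are all sound. The paper itself offers no proof of this lemma---it simply cites \cite[Lemma 29.3.A]{humphreys}---and your argument is essentially the standard one found there, so you have merely (and correctly) filled in the argument the paper outsources to the reference.
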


The following result is a direct consequence of Lemma \ref{bmult}.

\begin{Prop}    \label{red}
If $w\in W$ has a reduced decomposition $w=s_{i_1}s_{i_2}\dots
s_{i_k}$, then $$e^{a_{i_1}e_{i_1}}
e^{a_{i_2}e_{i_2}}\dots e^{a_{i_k}e_{i_k}}\in B_-wB_-, \quad e^{a_{i_1}f_{i_1}}
e^{a_{i_2}f_{i_2}}\dots e^{a_{i_k}f_{i_k}}\in B_+wN_+$$ if
$a_{i_j}\neq 0$ for all $j$.
\end{Prop}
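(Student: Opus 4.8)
The plan is to prove both inclusions by induction on $k=\ell(w)$, using the product formula for Bruhat cells from Lemma~\ref{bmult} for the inductive step and a rank-one computation for the base. I will treat the $e$-statement first and then deduce the $f$-statement from it.

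For the base case $w=s_i$ (so $k=1$), I would show that $e^{ae_i}\in B_-s_iB_-$ for every $a\neq 0$. Since $\alpha_i$ is a positive simple root, the root subgroup $X_{\alpha_i}=\{e^{ae_i}:a\in\C\}$ meets $B_-$ only in the identity, so $e^{ae_i}\notin B_-$ when $a\neq 0$. On the other hand, $e^{ae_i}$ lies in the minimal parabolic $P_i=B_-\sqcup B_-s_iB_-$ (which contains the subgroup $\SL(2)_i$ and hence $X_{\alpha_i}$), so it must lie in the nontrivial cell $B_-s_iB_-$. Concretely this is the classical $\SL(2)$ factorization of $\left(\begin{smallmatrix}1&a\\0&1\end{smallmatrix}\right)$, with $a\neq 0$, as (lower triangular)$\,s_i\,$(lower triangular), carried out inside $\SL(2)_i\hookrightarrow G$. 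The identical rank-one computation shows $e^{af_i}\in B_+s_iN_+$ for $a\neq 0$.

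For the inductive step I would write $w=s_{i_1}w'$ with $w'=s_{i_2}\cdots s_{i_k}$; because $s_{i_1}\cdots s_{i_k}$ is reduced, $\ell(s_{i_1})+\ell(w')=\ell(w)$. The inductive hypothesis gives $e^{a_{i_2}e_{i_2}}\cdots e^{a_{i_k}e_{i_k}}\in B_-w'B_-$, and the base case gives $e^{a_{i_1}e_{i_1}}\in B_-s_{i_1}B_-$, so the full product lies in $(B_-s_{i_1}B_-)(B_-w'B_-)$. Lemma~\ref{bmult}, applicable precisely because of the length additivity just noted, collapses this to $B_-s_{i_1}w'B_-=B_-wB_-$, which is the first assertion.

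The second assertion then follows at once by applying the Chevalley involution $\omega$, which interchanges $e_i\leftrightarrow f_i$ and $B_-\leftrightarrow B_+$, $N_-\leftrightarrow N_+$, and induces the identity on $W_G$ (so $\omega(B_-wB_-)=B_+wB_+$); since $\omega(e^{ae_i})=e^{-af_i}$ and the $a_{i_j}$ range over all nonzero scalars, the proven $e$-statement transports to $e^{a_{i_1}f_{i_1}}\cdots e^{a_{i_k}f_{i_k}}\in B_+wB_+=B_+wN_+$, where the final equality uses $B_+=HN_+$ and $wHw^{-1}=H$. (One may instead rerun the induction verbatim with $B_+$ in place of $B_-$, invoking the $B_+$-analogue of Lemma~\ref{bmult}, which is the same result of \cite{humphreys}.) I expect no serious obstacle; the only points needing care are confirming that the base-case element genuinely lands in the cell $B_-s_iB_-$ of the rank-one subgroup rather than in $B_\mp$, and checking that the reduced-word hypothesis is exactly what supplies the length additivity needed to apply Lemma~\ref{bmult} at each stage.
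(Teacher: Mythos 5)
Your proof is correct and follows exactly the route the paper intends: the paper gives no written argument beyond calling the proposition ``a direct consequence of Lemma \ref{bmult},'' and your induction on $\ell(w)$ with the rank-one base case $e^{ae_i}\in B_-s_iB_-$ (for $a\neq 0$) and the length-additivity hypothesis feeding Lemma \ref{bmult} is precisely the intended filling-in of that remark. The deduction of the $f$-statement via the Chevalley involution (or by rerunning the induction with $B_+$), together with the observation $B_+wB_+=B_+wN_+$, is also sound.
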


\begin{Def}
  A $Z$-twisted Miura-Pl\"ucker $(G,q)$-oper is called $({i_1} \dots
  {i_k})$-generic if it corresponds to an $({i_1} \dots
  {i_k})$-generic solution of the $QQ$-system via the bijection in
  Theorem \ref{inj}.
\end{Def}

\subsection{From Miura-Pl\"ucker to Miura $q$-opers}    \label{main thm}

We shall now describe a sufficient condition for a
$Z$-twisted Miura-Pl\"ucker $(G,q)$-opers to be a Miura $(G,q)$-oper.

Let $w_0=s_{i_1}\dots s_{i_{\ell}}$ be a reduced decomposition of the
longest element of the Weyl group. In what follows, we refer to an
$(i_1,\dots,i_{\ell})$-generic object as $w_0$-{\em generic}.

\begin{Thm}    \label{w0}
Every $w_0$-generic $Z$-twisted Miura-Pl\"ucker $(G,q)$-oper is a
nondegenerate $Z$-twisted Miura $(G,q)$-oper.
\end{Thm}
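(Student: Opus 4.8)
The plan is to promote the given Miura--Pl\"ucker structure to a genuine Miura structure by producing a \emph{second} $A$-invariant Borel reduction that is transverse to the one already carried by the oper, and then to read off the twist from the associated Cartan connection. Throughout I write the underlying $q$-connection in the form \eqref{key}, so that $A(z)\in B_+(z)$ and the Borel reduction $\cF_{B_+}$ of the Miura structure corresponds to the constant Borel $B_+$ in this trivialization, while the associated Cartan connection $A^H(z)$ has the $Z$-twisted form \eqref{AH2} with $v_H(z)=\prod_i Q^i_+(z)^{\check\alpha_i}$.

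First I would invoke $w_0$-genericity. Since $w_0=s_{i_1}\cdots s_{i_\ell}$ is reduced, Proposition~\ref{bminus} applies with $w=w_0$ and produces an element $b_-(z)\in B_-(z)$ of the form $e^{c_{i_1}(z)f_{i_1}}\cdots e^{c_{i_\ell}(z)f_{i_\ell}}h(z)$ with all $c_{i_j}(z)\neq 0$, satisfying $A(z)b_-(z)\,v=b_-(qz)\,w_0(Z)\,v$ for every highest-weight vector $v=\nu_\lambda$ in every finite-dimensional irreducible representation. Because $b_-(z)$ acts as a single group element, the lines $\C\,b_-(z)\nu_\lambda$ obey the Pl\"ucker relations and hence define a Borel reduction $\cF'_{B_+}$; the displayed relation shows that $A$ maps $\C\,b_-(z)\nu_\lambda(z)$ into $\C\,b_-(z)\nu_\lambda(qz)$, so $\cF'_{B_+}$ is preserved by $A$. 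The reduced-word hypothesis now enters through the second part of Proposition~\ref{red}: the unipotent factor $e^{c_{i_1}(z)f_{i_1}}\cdots e^{c_{i_\ell}(z)f_{i_\ell}}$ lies in $B_+(z)\,w_0\,N_+(z)$, whence $b_-(z)\in B_+(z)\,w_0\,B_+(z)$. Thus $\cF_{B_+}$ and $\cF'_{B_+}$ are in relative position $w_0$, i.e. transverse on a dense open subset of $\P^1$.

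Next I would use transversality to show the underlying oper is $Z$-twisted. A $q$-connection preserving two Borel reductions that are generically transverse is generically reduced to their intersection, a maximal torus $T'(z)=B_+(z)\cap b_-(z)B_+(z)b_-(z)^{-1}$, so $A(z)$ is $T'$-valued. Under the canonical isomorphism $T'\xrightarrow{\ \sim\ }B_+/N_+=H$, the image of $A$ is exactly $A^H(z)$, which by hypothesis is of the coboundary form $v_H(qz)\,Z\,v_H(z)^{-1}$. Consequently $A$ is $q$-gauge equivalent to the constant $Z$, that is, $(\cF_G,A,\cF_{B_-})$ is a $Z$-twisted $(G,q)$-oper. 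I expect \textbf{this to be the main obstacle}: passing from ``$T'$-valued with $Z$-twisted image in $H$'' to an honest $B_+(z)$-gauge equivalence $A(z)=v(qz)\,Z\,v(z)^{-1}$ requires ruling out $q$-difference resonances in $N_+$. The point is precisely that $A^H$ has the explicit coboundary form coming from the $QQ$-system, together with the non-resonance hypothesis \eqref{assume} on $Z$; this is the same mechanism that makes the equations \eqref{rm} uniquely solvable in the proof of Theorem~\ref{BAE}, and it is what prevents the obstruction from appearing.

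Finally, with $A$ now known to be a $Z$-twisted oper and $Z$ regular, I would close the argument with Proposition~\ref{Z prime}: each $A$-invariant $B_+$-reduction makes the oper into a $Z'$-twisted Miura oper for a unique $Z'\in W_G\cdot Z$, the map $\mu_Z$ being a bijection. The given reduction $\cF_{B_+}$ has associated Cartan connection $A^H$ of the $Z$-twisted shape \eqref{AH2} with the $Q^i_+$ monic; since the twist $Z'$ is recovered from $A^H$ (using the monic normalization and the uniqueness argument of Section~\ref{Hqconn}, which forces the ambiguity in $v_H$ to be a scalar), we get $Z'=Z$. Hence the given object is a genuine $Z$-twisted Miura $(G,q)$-oper, and it is nondegenerate because it was so as a Miura--Pl\"ucker oper. (Alternatively, one checks directly that the second reduction $\cF'_{B_+}$ exhibits a $w_0(Z)$-twisted Miura structure, and that $w_0(Z)\in W_G\cdot Z$; the two descriptions are reconciled by Proposition~\ref{Z prime}.)
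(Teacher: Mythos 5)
Your proposal is correct in substance and rests on the same two pillars as the paper's proof: Proposition~\ref{bminus} applied to the reduced word for $w_0$, and Proposition~\ref{red} to write $b_-(z)=b_+(z)w_0n_+(z)$. Where you diverge is the endgame. The paper finishes by observing that $U(z)=Z^{-1}b_+(qz)^{-1}A(z)b_+(z)$ lies in $B_+(z)$ and fixes the lowest weight vector $w_0v$ of every irreducible representation, hence equals $1$, which immediately yields $A(z)=b_+(qz)Zb_+(z)^{-1}$ with $b_+(z)\in B_+(z)$ --- i.e.\ the $Z$-twisted Miura condition in one stroke. You instead argue via two transverse $A$-invariant Borel reductions. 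That works, but two points need tightening. First, the assertion that $A(z)$ is ``$T'$-valued'' with $T'=B_+\cap b_-B_+b_-^{-1}$ ignores the $q$-shift: the correct statement is that after gauging by $b_+(z)^{-1}$ both reductions become the constant Borels $B_+$ and $w_0B_+w_0^{-1}=B_-$, so that $A'(z)=b_+(qz)^{-1}A(z)b_+(z)\in B_+(z)\cap B_-(z)=H(z)$. Second, the ``main obstacle'' you flag is not actually there: once $A'(z)\in H(z)$, it coincides with its own image in $B_+/N_+$, which is the $H(z)$-gauge transform of $A^H(z)=v_H(qz)Zv_H(z)^{-1}$ by the Cartan part of $b_+(z)^{-1}$; hence $A'(z)$, and therefore $A(z)$, is manifestly a $B_+(z)$-coboundary of $Z$. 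No non-resonance condition and no appeal to \eqref{assume} is needed at this step --- the $N_+$-part has already been killed by the two-reduction argument, so there is nothing left to trivialize. With that correction, your final paragraph invoking Proposition~\ref{Z prime} becomes redundant (and, as written, pinning down $Z'=Z$ from $A^H$ alone would require an extra argument ruling out twists differing by integer powers of $q$ along coweights), since the corrected second step already produces the $B_+(z)$-gauge equivalence $A(z)=\tilde v(qz)Z\tilde v(z)^{-1}$ demanded by Definition~\ref{ZtwMiura}. In short: same skeleton as the paper, a geometrically appealing but slightly less economical finish, and a misdiagnosed obstruction that dissolves once the gauge bookkeeping is done carefully.
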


\begin{proof} 
Let  $$
A(z)=\prod_j\Bigg[\zeta_j\frac{Q^j_+(qz)}{Q^j_+(z)}\Bigg]^{\check{\alpha}_j}
e^{\frac{\Lambda_j(z)e_j}{g_j(z)}}
$$ be the $w_0$-generic $Z$-twisted Miura-Pl\"ucker $(G,q)$-oper
coming from a $w_0$-generic solution $\{Q^j_{+}\}$ of the $QQ$-system.
By Proposition \ref{bminus}, there exists an element $b_-(z)\in
B_-(z)$ such that 
$$
b_-(qz)w_0(Z)v=A(z)b_-(z)v,
$$
where $v$ is any highest weight vector in a finite-dimensional
irreducible representation of $G$; moreover, $$
b_-(z)=e^{c_{i_1}f_{i_1}}e^{c_{i_2}f_{i_2}}\dots e^{c_{i_k}f_{i_\ell}}h(z)
$$
with $c_{i_j}(z) \in \C(z)^\times$ and $h(z)\in H(z)$.

By Proposition \ref{red},
$$
b_-(z)=b_{+}(z)w_0 n_+(z),
$$
where $b_+(z)\in B_+(z)$ and $n_+(z)\in N_+(z)$. Therefore, we have
$$
b_+(qz)Zw_0v=A(z)b_+(z)w_0 v,
$$
so if we set
\begin{equation}    \label{Uz}
U(z)=Z^{-1}b^{-1}_+(qz)A(z)b_+(z)\in B_+(z),
\end{equation}
then
$$
w_0v=U(z)w_0 v
$$
for any irreducible finite-dimensional representation of $G$ with
highest weight vector $v$. Thus, $U(z)$ is an element of $B_+(z)$
which fixes the lowest weight vector $w_0 v$ of any irreducible
finite-dimensional representation of $G$. This means that $U(z)=1$.
Equation \eqref{Uz} then implies that $A(z)$ satisfies
\begin{equation}    \label{Abplus}
A(z) = b_+(qz) Z b_+(z)^{-1}
\end{equation}
for some $b_{+}(z) \in B_+(z)$.  Thus, we have proved that every
$w_0$-generic $Z$-twisted Miura-Pl\"ucker $(G,q)$-oper is a
nondegenerate $Z$-twisted Miura $(G,q)$-oper. Equivalently, every
$w_0$-generic solution of the $QQ$-system corresponds to a
nondegenerate $Z$-twisted Miura $(G,q)$-oper.
\end{proof}

\begin{Rem}    \label{3sets}
  Given a regular semisimple element $Z \in H$ and a collection of
  polynomials $\{ \Lambda_i(z) \}_{i=1,\ldots,r}$ as above, consider the
  following three sets of objects on $\P^1$:

\begin{itemize}

\item $q\on{-MPOp}_G^Z$, the set of nondegenerate $Z$-twisted
  Miura-Pl\"ucker $(G,q)$-opers;

\item $q\on{-MPOp}_G^{Z,w_0}$, the set of $w_0$-generic $Z$-twisted
  Miura-Pl\"ucker $(G,q)$-opers;

\item $q\on{-MOp}_G^Z$, the set of nondegenerate $Z$-twisted
  Miura $(G,q)$-opers.

\end{itemize}

According to the discussion at the beginning of this section and
Theorem \ref{w0}, we have the inclusions
\begin{equation}    \label{inclusions}
q\on{-MPOp}_G^{Z,w_0} \; \subset \; q\on{-MOp}_G^Z \; \subset \;
q\on{-MPOp}_G^Z.
\end{equation}
It would be interesting to find out under what conditions either (or
both) of these inclusions is an equality.\qed
\end{Rem}

\subsection{Equivalence of the $QQ$-Systems}    \label{equiv}

The $QQ$-system \eqref{qq} depends on the choice of the Coxeter
element $c$ that we used in the definition of $(G,q)$-opers; the
ordering of the simple reflections in it is the ordering of the simple
roots in \eqref{qq}. In this subsection, we show that $QQ$-systems
corresponding to different Coxeter elements (or equivalently,
different orderings of the simple roots) are equivalent. First, we
explain what we mean by equivalence.

We say that two $QQ$-systems are \emph{gauge equivalent} if they are
related to each other via the transformations
\begin{align}
&Q^i_{\pm}(x)\mapsto Q^i_{\pm}(D^{\{i\}}x)\,,\qquad \Lambda^i(x)\mapsto
\Lambda^i(D^{\{i\}}x)\,, \notag\\
&\xi_i \mapsto \alpha^{(i)}\xi_i\,,\qquad \qquad
\widetilde{\xi}_i\mapsto \alpha^{(i)}\widetilde{\xi}_i\,,
\end{align}
where the $D^{\{i\}}$'s and $\alpha^{(i)}$'s are non-zero complex
numbers.

The following theorem is a multiplicative version of Theorem 2.5 and
Corollary 2.6 in \cite{Mukhin_2005}.

\begin{Thm}
 The $QQ$-system associated to the Coxeter element $c$, parameters
  $\{\zeta_i\}$ and polynomials $\{\Lambda_i(z)\}$ is gauge equivalent
  to the $QQ$-system associated to a Coxeter element $c'$ with the
  same parameters $\{\zeta_i\}$ and polynomials
  $\{\Lambda_i(q^{d^{\{i\}}}z)\}$ for some $d^{\{i\}} \in \Z$.
\end{Thm}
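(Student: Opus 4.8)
The plan is to realize the passage from the $c$-system to the $c'$-system by an explicit gauge transformation in which the $D^{\{i\}}$ are integer powers of $q$ and the $\alpha^{(i)}$ repair the coefficients. The starting observation is that the ordering enters \eqref{qq} in only two ways: through the constants $\wt\xi_i,\xi_i$ of \eqref{xi}, and through the decision, for each Dynkin neighbor $j$ of $i$ (i.e.\ each $j\ne i$ with $a_{ji}\ne 0$), of whether the factor $[Q^j_+(\cdot)]^{-a_{ji}}$ on the right-hand side of the $i$th equation carries the argument $qz$ (when $j$ follows $i$) or $z$ (when $j$ precedes $i$). I would therefore try to absorb the change of ordering into rescalings of the arguments: setting $D^{\{i\}}=q^{d_i}$ and $Q^i_\pm(z)\mapsto Q^i_\pm(q^{d_i}z)$ and renormalizing the $i$th equation in the new variable, the argument of the surviving factor $[Q^j_+]^{-a_{ji}}$ gets multiplied by $q^{\,d_i+1-d_j}$ (when $j$ followed $i$) or by $q^{\,d_i-d_j}$ (when $j$ preceded $i$). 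The goal is to choose the $d_i$ so that, for every Dynkin edge, this shifted argument matches the one prescribed by $c'$.

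First I would record the resulting condition on the shifts. Writing $\epsilon^c_{ij}=1$ if $j$ precedes $i$ in $c$ and $0$ otherwise, requiring each surviving $q$-exponent to equal $1$ exactly when $j$ follows $i$ in $c'$ gives, for each edge $(i,j)$ of the Dynkin diagram,
\[
d_i-d_j=\epsilon^c_{ij}-\epsilon^{c'}_{ij}\in\{-1,0,1\}.
\]
One checks that the condition read off from the $j$th equation is the negative of this, so the two are consistent on each edge. The crucial point is then solvability: since $\fg$ is simple, its Dynkin diagram is a tree, so this system of prescribed differences along edges has no cycle obstruction. Choosing a root vertex, setting its $d$ to $0$ and propagating along edges produces integers $d_i\in\Z$ satisfying all the constraints simultaneously; these are the exponents $d^{\{i\}}$ of the theorem, and correspondingly $\Lambda_i(z)\mapsto\Lambda_i(q^{d_i}z)$.

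With the $d_i$ fixed, the argument-shift part of the gauge transformation carries the $c$-system into a system whose right-hand sides already have exactly the $c'$-ordering and whose singularity data are the $\Lambda_i(q^{d_i}z)$; what remains is to reconcile the coefficients. The shifted system still carries the $c$-constants $\wt\xi_i,\xi_i$, whereas the genuine $c'$-system carries $\wt\xi_i^{\,c'},\xi_i^{\,c'}$. I would set $\alpha^{(i)}=\wt\xi_i^{\,c'}/\wt\xi_i$ and verify that simultaneously $\alpha^{(i)}=\xi_i^{\,c'}/\xi_i$; this compatibility is exactly the statement that the ratio $\wt\xi_i/\xi_i=\prod_{j}\zeta_j^{a_{ji}}$ (which follows from \eqref{xi} together with $a_{ii}=2$) does not depend on the ordering. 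Thus a single $\alpha^{(i)}$ rescales both $\wt\xi_i$ and $\xi_i$ to their $c'$-values, which is precisely the $\wt\xi_i\mapsto\alpha^{(i)}\wt\xi_i$, $\xi_i\mapsto\alpha^{(i)}\xi_i$ part of the gauge transformation; composing it with the argument shift identifies the transformed system with the $c'$-system and establishes the claimed gauge equivalence.

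I expect the main obstacle to be the solvability of the shift system, i.e.\ producing a single global assignment $i\mapsto d_i$ compatible with all the local edge conditions at once; this is what forces the use of the tree structure of the Dynkin diagram and is the place where simplicity of $\fg$ is really used. A secondary, purely bookkeeping difficulty is the overall scalar produced when the non-monic factor $\Lambda_i(q^{d_i}z)$ and the coefficient rescaling are combined; this is harmless because it only affects $\Lambda_i$ and the auxiliary polynomials $Q^i_-$ by constant multiples and leaves the roots — hence the nondegeneracy conditions and the associated Bethe Ansatz equations — untouched. As an alternative to the global tree argument, one could instead reduce to the case in which $c$ and $c'$ differ by a single adjacent transposition in the ordering (trivial when the two simple roots are not joined in the Dynkin diagram, since then the reflections commute and $c=c'$), treat that case by the explicit one-edge shift above, and compose.
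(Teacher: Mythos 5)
Your proof is correct and follows essentially the same route as the paper's: both absorb the reordering into $q$-power shifts of the arguments of $Q^i_\pm$ and $\Lambda_i$, whose simultaneous solvability rests on the tree structure of the Dynkin diagram, and both fix the coefficients using the fact that $\wt\xi_i/\xi_i=\prod_j\zeta_j^{a_{ji}}$ is independent of the ordering. The only cosmetic difference is that the paper normalizes every system to a canonical reference system (with $\hat b_{ij}$ defined via distances from $\alpha_1$ in the Dynkin tree and $\hat\xi_i=1$) and concludes by transitivity, whereas you construct the $c\to c'$ gauge transformation directly by propagating the edge constraints $d_i-d_j$ from a chosen root.
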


\begin{proof}
  Recall from \eqref{qqgen} that the $QQ$-system defined using the
  Coxeter element $c=w_{i_1}\dots w_{i_r}$ may be written in terms of
  integers $\{b_{ij}\}_{i\ne j}$, where $b_{ij}$ equals $1$ (resp.
  $0$) if $w_j$ comes before (resp. after) $w_i$ in this order.  To
  prove the theorem, it suffices to show that this system is gauge
  equivalent to one defined in terms of other parameters
  $\hat{b}_{ij}$ whose definition only involves the Dynkin diagram.

Since $G$ is simple, the underlying graph of the Dynkin diagram is a
tree whose vertices are labeled by the simple reflections $\a_i$.  We let
$\delta_i$ denote the distance from $\a_i$ to $\a_1$.
  


We now define $\hat{b}_{ij}$ via
\begin{equation}\hat{b}_{ij}=\begin{cases} 1 & \text{ if
      $\delta_i <\delta_j$, or $\delta_i =\delta_j$ and $i<j$};\\
0 & \text{ otherwise.}
\end{cases}
\end{equation}

Let $\wt{\hat{\xi}}_i=\prod_j\zeta_j^{a_{ji}}$ and
${\hat{\xi}}_i=1$. Our original $QQ$-system is gauge equivalent to the
system described by the parameters $\hat{b}_{ij}, \wt{\hat{\xi}}_i,
\hat{{\xi}}_i$.  Indeed, one can set $\alpha^{(i)}=\xi_i$ and 
\begin{equation}
D^{\{i\}}=q^{b_{i_1i_2}+b_{i_2i_3}+\dots +b_{i_{k-1}i_k}-(k-1)}.
\end{equation}
\end{proof}

\section{$(G,q)$-opers and Baxter relations}    \label{Sec:DSReduction}

The space of $\g$-opers on the punctured disc may be identified with
the phase space of the Drinfeld-Sokolov reduction
\cite{Drinfeld:1985}. Likewise, the space of $(G,q)$-opers on the
punctured disc can be described in terms of the $q$-difference version
of the Drinfeld-Sokolov reduction, which was defined in
\cite{Frenkel1998,1998CMaPh.192..631S}. For example, just as
$\sl_n$-opers can be represented as $n$th order differential
operators, $(\SL(n),q)$-opers can be represented as $n$th order
$q$-difference operators. In this section, we apply results of
\cite{Frenkel1998,1998CMaPh.192..631S} to construct a system of canonical
coordinates for $q$-opers on $\P^1$ with regular singularities for all
simply connected simple complex Lie groups $G$ other than $E_6$. We
conjecture that if $G$ is simply laced, then the relations between
these canonical coordinates and the polynomials $Q^i_+(z)$ introduced
above are equivalent to the generalized Baxter $TQ$-relations
established in \cite{Frenkel:2013uda}.

\subsection{Presentations of unipotent subgroups}

Let $\wt{s}_i \in N(H)$ be a lifting of the simple reflection $w_i \in
W_G$ (not necessarily equal to the liftings $s_i$ used before).  Given
$w\in W_G$, let $\wt{w}=\wt{s}_{i_1}\dots \wt{s}_{i_k}$ of $w$ ne the
ocorresponding lifting
Suppose that $w\in W_G$ has a reduced decomposition $w=w_{i_1}\dots
w_{i_k}$ where the $w_{i_j}$'s are distinct, and consider the lifting
$\wt{w}=\wt{s}_{i_1}\dots \wt{s}_{i_k}$ of $w$.

Let $X_\a$ denote the root subgroup corresponding to $\a$.  Consider
the subgroup
$$
N_-^w=N_-\cap \wt{w}N_+ \wt{w}^{-1}.
$$
It is well-known that $N_-^{w}$ is a subgroup of $N_-$ of
dimension $\ell(w)$~\cite[Section 28]{humphreys}, which does not
depend on the choice of the lifting $\wt{w}$.  Moreover,
$$N_-^w=\prod_{\a<0,w\cdot\a>0}X_\a,$$ where the product can be taken
in any order.  In the case of a Coxeter
element, or more generally, for any Weyl group element with a reduced
decomposition consisting of distinct simple reflections, this group
can be described more explicitly.

\begin{Prop} \label{Nw} Let $w$ be a Weyl group element whose reduced
  decompositions consist of distinct simple reflections.  Then, we
  have the presentation
\begin{equation} N_-^w =\wt{s}_{i_1}X_{\a_{i_1}}\dots
    \wt{s}_{i_r}X_{\a_{i_k}}\wt{w}^{-1}.
  \end{equation}
\end{Prop}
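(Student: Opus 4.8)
The plan is to prove the identity by showing that both sides coincide with the same ordered product of root subgroups. First I would introduce the partial products $u_j=\wt{s}_{i_1}\wt{s}_{i_2}\cdots\wt{s}_{i_j}$ for $j=0,1,\dots,k$ (so that $u_0$ is the identity and $u_k=\wt{w}$), and rewrite the right-hand side by inserting the trivial factors $u_{j-1}^{-1}u_j=\wt{s}_{i_j}$. This telescoping shows that
\[
\wt{s}_{i_1}X_{\a_{i_1}}\wt{s}_{i_2}X_{\a_{i_2}}\cdots \wt{s}_{i_k}X_{\a_{i_k}}\wt{w}^{-1}
=\prod_{j=1}^{k}\bigl(u_j\,X_{\a_{i_j}}\,u_j^{-1}\bigr).
\]
Since $u_j$ is a lift of $w_{i_1}\cdots w_{i_j}$, and conjugation by a representative of a Weyl group element carries a root subgroup to the root subgroup of the image root (independently of the choice of representative), each factor equals $X_{\gamma_j}$ with $\gamma_j=w_{i_1}\cdots w_{i_j}(\a_{i_j})$. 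Hence the right-hand side of the proposition equals the ordered product $X_{\gamma_1}X_{\gamma_2}\cdots X_{\gamma_k}$.

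The second step is to identify the roots $\gamma_j$ with the roots occurring in $N_-^w$. Using $w_{i_j}(\a_{i_j})=-\a_{i_j}$, I would write $\gamma_j=-w_{i_1}\cdots w_{i_{j-1}}(\a_{i_j})$. By the standard combinatorics of reduced words, the roots $\beta_j:=w_{i_1}\cdots w_{i_{j-1}}(\a_{i_j})$ are distinct positive roots, and $\{\beta_1,\dots,\beta_k\}$ is exactly the inversion set $\{\beta\in\Phi^+ : w^{-1}\beta\in\Phi^-\}$. Consequently the $\gamma_j=-\beta_j$ are $k=\ell(w)$ distinct negative roots satisfying $w^{-1}(\gamma_j)\in\Phi^+$; that is, $\{\gamma_1,\dots,\gamma_k\}$ is precisely the set of roots appearing in the product description $N_-^w=\prod_{\alpha}X_\alpha$ recalled just above the statement.

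Finally, I would invoke the cited fact that this product description of $N_-^w$ holds with the factors taken in an arbitrary order. Taking the order $\gamma_1,\dots,\gamma_k$ gives $N_-^w=X_{\gamma_1}\cdots X_{\gamma_k}$, which is exactly the expression obtained for the right-hand side in the first step; this proves the proposition.

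I expect the only genuine content to lie in the second step: matching the explicitly produced roots $\gamma_j$ with the inversion-type set that indexes $N_-^w$, which rests on the standard (but convention-sensitive) description of the inversions of a reduced word. The first step is a purely formal telescoping, and the last step is a direct appeal to the product-in-any-order statement already recorded in the text; the main care needed is the bookkeeping of the $w$ versus $w^{-1}$ convention, so that the set $\{\gamma_j\}$ matches the roots $\{\alpha<0 : w^{-1}\alpha\in\Phi^+\}$ of $N_-^w$ on the nose. (Note that distinctness of the simple reflections is not actually needed for this argument, but it is the case of interest for Coxeter elements.)
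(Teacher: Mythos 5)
Your proof is correct and follows essentially the same route as the paper's: telescope the product into conjugated root subgroups $X_{\gamma_j}$ with $\gamma_j=w_{i_1}\cdots w_{i_j}(\alpha_{i_j})$, identify $\{\gamma_1,\dots,\gamma_k\}$ with the set of roots indexing $N_-^w$, and invoke the order-independence of the product decomposition of $N_-^w$. The only divergence is in the middle step: where you cite the standard inversion-set description attached to a reduced word, the paper reproves that identification from scratch, using the distinctness of the simple reflections to see directly that each $\gamma_j$ is negative and that the $\gamma_j$ are pairwise distinct --- which explains why that hypothesis appears in the statement even though, as you observe, it is not logically necessary once the general fact about reduced words is granted.
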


\begin{proof} Consider the roots $\beta_j=w_{i_1}\dots
  w_{i_j}(\a_{i_j})$ for  $j=1,\ldots,k=\ell(w)$.  We will show that these
  are precisely the negative roots $\a$ such that $w^{-1}(\a)$ is
  positive.  First, observe that each $\beta_j$ is negative.  Indeed,
  $\beta_j=w_{i_1}\dots w_{i_{j-1}}(-\a_{i_j})=-\a_{i_j}+\gamma_j$,
  where $\gamma_j$ is a linear combination of the $\a_{i_s}$'s for
  $1\le s\le j-1$.  Since the $\a_{i_s}$'s are distinct, the
  coefficient of $\a_{i_j}$ is negative, so $\beta_j$ is negative.
  Next, we show that the $\beta_j$'s are distinct.  Suppose
  $\beta_j=\beta_s$ with $s>j$.  Then, we have
  $\a_{i_j}=w_{i_{j+1}}\dots w_{i_s}(\a_{i_s})$.  However, the root on
  the right is negative by the argument above, contradicting the fact
  that $\a_{i_j}$ is positive.  Finally, $w^{-1}\beta_j=w_{i_k}\dots
  w_{i_{j+1}}(\a_{i_j})$, which is positive.

    We conclude that $N_-^w=X_{\beta_1}\dots X_{\beta_k}$.  The result
    now follows from the fact that 
    \begin{align*} X_{\beta_1}\dots
      X_{\beta_k}&=\wt{s}_{i_1}X_{\a_{i_1}}\wt{s}_{i_1}^{-1}
      \wt{s}_{i_1}\wt{s}_{i_2}X_{\a_{i_1}}\wt{s}_{i_1}^{-1}
      \wt{s}_{i_2}^{-1}\dots\wt{w}X_{\a_{i_k}}
      \wt{w}^{-1}\\&=\wt{s}_{i_1}X_{\a_{i_1}}\dots
    \wt{s}_{i_r}X_{\a_{i_k}}\wt{w}^{-1}.
    \end{align*}
\end{proof}

\subsection{Canonical coordinates on $q$-opers}

It follows from Definition \ref{d:regsing} that a $(G,q)$-oper
with regular singularities determined by the nonconstant polynomials
$\{ \Lambda_i(z) \}_{i=1,\ldots,r}$ is an equivalence class of
$q$-connections of the form
$$
A(z)= n'(z)\prod_i(\Lambda_i(z)^{\check{\alpha}_i} \, s_i)n(z), \qquad
n(z), n'(z)\in N_-(z),
$$
under the action of
$q$-gauge transformations by elements $N_-(z)$. We now describe a set
of canonical representatives for these equivalence classes, using
Theorem 3.1 from
\cite{1998CMaPh.192..631S}. Since this theorem was proved in
\cite{1998CMaPh.192..631S} for all simple Lie groups except $E_6$, in
the rest of this section we will assume that $G \neq E_6$.

\begin{Thm}    \label{thm:qds}
For every $A(z) \in n'(z)\prod_i(\Lambda_i(z)^{\check{\alpha}_i} \,
  s_i)n(z)$, there exists a unique $u'(z)\in N_-(z)$ such that
$$
u'(qz) A(z){u'}^{-1}(z) \in N^s_-(z)
\prod_i(\Lambda_i(z)^{\check{\alpha}_i} \, s_i).
$$
Moreover, there exist unique $T_i(z)\in\C(z)$ such
that
\begin{equation}    \label{Ti}
u'(qz) A(z){u'}^{-1}(z)=\prod_i
  \Big[\Lambda_i(z)^{\check\alpha_i}s_i e^{-T_i(z)e_i}\Big].
\end{equation}
  As usual, the order in the product is determined by the Coxeter
  element $c$.
\end{Thm}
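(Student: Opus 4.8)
The plan is to treat the theorem's two assertions separately: first, the existence and uniqueness of a $q$-gauge transformation $u'(z)\in N_-(z)$ carrying $A(z)$ into the slice $N^s_-(z)\prod_i(\Lambda_i(z)^{\check\alpha_i}s_i)$; and second, the explicit parametrization of that slice by the functions $T_i(z)$. For the first assertion I would invoke the $q$-Drinfeld--Sokolov reduction of \cite{1998CMaPh.192..631S} (Theorem 3.1 there, established for all simple $G$ except $E_6$, whence that standing hypothesis). Its mechanism is a recursion along the lower central series of the Lie algebra $\mathfrak n_-$ of $N_-$: writing $u'(z)$ as a product of exponentials of elements valued in the successive quotients $\mathfrak n_-^{(k)}/\mathfrak n_-^{(k+1)}$, one solves at stage $k$ for the component transverse to $N^s_-$ so that the gauge-transformed connection lands in the slice modulo $N_-^{(k+1)}$. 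On each graded piece this reduces to inverting a $\C(z)$-linear operator of the form $f(z)\mapsto f(qz)-\lambda(z)f(z)$, where $\lambda(z)$ is the monomial in the $\Lambda_j(z)$ and structure constants produced by $\on{Ad}$ of the principal term on the relevant root space. Since the Coxeter element has no fixed roots in the directions transverse to $N^s_-$ and $q$ is not a root of unity, these operators are invertible, giving existence and uniqueness of $u'(z)$ one level at a time, hence of $u'(z)$.

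For the second assertion the relevant input is Proposition \ref{Nw}, which for $w=c$ and the lifting $s=s_{i_1}\cdots s_{i_r}$ identifies $N^s_-$ with the product $X_{\beta_1}\cdots X_{\beta_r}$ of the negative root subgroups along the roots $\beta_j$ constructed there. I would show that the map sending $(T_1,\dots,T_r)\in\C(z)^r$ to $m(z)=\big[\prod_i\Lambda_i(z)^{\check\alpha_i}s_i e^{-T_i(z)e_i}\big]\cdot\prod_i(\Lambda_i(z)^{\check\alpha_i}s_i)^{-1}$ is a bijection onto $N^s_-(z)$; together with the first assertion this yields existence and uniqueness of the $T_i(z)$. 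To evaluate $m(z)$ one conjugates each exponential $e^{-T_i(z)e_i}$ by the adjacent reflection and torus factors coming from the inverse principal term: since $\on{Ad}(s_i)$ sends $\mathfrak g_{\alpha_i}$ into $\mathfrak g_{-\alpha_i}$ and $\on{Ad}(\Lambda^{\check\alpha})$ preserves each root space while rescaling it, these positive-root exponentials collect into elements of the negative root groups $X_{\beta_j}(z)$, so $m(z)$ lies in $N^s_-(z)$. Each $T_i(z)$ enters the corresponding $X_{\beta_j}(z)$-coordinate through multiplication by a fixed nonzero monomial in the $\Lambda_j(z)$ --- the rank-one prototype being the identity $T=b\Lambda^2$ in $\SL(2)$ --- so the map is invertible and the $T_i(z)$ are determined uniquely.

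The hard part is the first assertion, specifically the invertibility, at each graded level, of the $q$-difference operators that drive the gauge-fixing; this is precisely where the transversality of the Coxeter element (no fixed roots off $N^s_-$) and the hypothesis that $q$ is not a root of unity enter, and it is the substance imported from \cite{1998CMaPh.192..631S}. The only additional care needed is to confirm that the canonical slice produced there coincides, for our regular-singularity data $\{\Lambda_i(z)\}$, with $N^s_-(z)\prod_i(\Lambda_i(z)^{\check\alpha_i}s_i)$; granting this, the second assertion is the elementary, if slightly tedious, reorganization described above.
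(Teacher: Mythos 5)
Your proposal matches the paper's own proof in both structure and substance: the existence and uniqueness of $u'(z)$ is obtained by citing Theorem 3.1 of \cite{1998CMaPh.192..631S} (after noting that $\prod_i(\Lambda_i(z)^{\check\alpha_i}s_i)$ is a lifting of $c$ and rearranging $A(z)$ into the form that theorem requires), and the parametrization by the $T_i(z)$ follows from Proposition \ref{Nw} exactly as you describe. The only difference is cosmetic --- you sketch the internal recursion of the cited gauge-fixing theorem, which the paper simply invokes --- so there is nothing further to add.
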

\begin{proof}  Note that $\wt{s}_i=\Lambda_i(z)^{\check{\alpha}_i}
  \, s_i$ is a lifting of $w_i$ and
  $\wt{s}=\prod_i(\Lambda_i(z)^{\check{\alpha}_i} \, s_i)$ is the
  corresponding lifting of $c$.
Let $\wt{N}_-^c=N_-\cap c N_- c^{-1}$.  It is a standard fact that
$N_-=N_-^c\wt{N}_-^c$~\cite[Section 28]{humphreys}.  If we
write $n'(z)=u(z)v(z)$ with $u(z)\in N_-^c$ and $v(z)\in
\wt{N}_-^c$, then
$$
A(z)=u(z) \, \wt{s}\; \wt{s}^{-1} \, v(z) \, \wt{s}
\, n(z)=u(z) \, \wt{s} \, \wt{n}(z),
$$ for some  $\wt{n}(z)\in N_-(z)$.
Theorem 3.1 of \cite{1998CMaPh.192..631S} now applies to show that
there is a unique $u'(z)$ such that
$$
u'(qz) A(z){u'}^{-1}(z)\in
N_-^c(z)\wt{s}.
$$
It then follows from Proposition \ref{Nw} that
$$
u'(qz)
A(z){u'}^{-1}(z)=\prod_i\wt{s}_{i}x_{\a_i},
$$
where $x_{\a_i}\in
X_{\a_i}(z)$.  Since $x_{\a_i}=e^{-T_i(z)e_i}$ for a unique
$T_i(z)\in\C(z)$, we obtain the desired factorization.
\end{proof}

\begin{Rem}
  Note that the same proof applies if we consider general meromorphic
  $(G,q)$-opers instead of those with regular singularities, so that
  the polynomials $\Lambda_i(z)$ are replaced by arbitrary non-zero
  rational functions $\phi_i(z)$ as in \eqref{qop1}. 
\end{Rem}

Thus, we obtain a system of canonical representatives \eqref{Ti} for
$(G,q)$-opers with regular singularities determined by $\{
\Lambda_i(z) \}_{i=1,\ldots,r}$ and hence, a system of canonical
coordinates $(T_1(z),\ldots,T_r(z))$ on the space of these
$(G,q)$-opers.

\subsection{Generalized Baxter relations}

Now, suppose that our $(G,q)$-oper has the structure of a nondegenerate
$Z$-twisted Miura-Pl\"ucker $(G,q)$-oper. According to Theorem
\ref{BAE}, it is then uniquely determined by the polynomials $\{
Q^i_+(z) \}_{i=1,\ldots,r}$ giving a nondegenerate solution of the
Bethe Ansatz equations \eqref{bethe}. Therefore, we can express the
coordinates $T_j(z)$ of this $(G,q)$-oper in terms of $\{ Q^i_+(z)
\}_{i=1,\ldots,r}$ and $\{ \Lambda_i(z) \}_{i=1,\ldots,r}$.

Consider the case $G=\SL_2$. Choose a canonical representative
$$
\wt{A}(z) = \begin{pmatrix} 0 & \Lambda(z) \\ -\Lambda(z)^{-1} & \Lambda(z)
  T(z) \end{pmatrix}
$$ for a $q$-gauge equivalence class as in \eqref{Ti}.  Suppose that
the $q$-connection $A(z)$ given by formula \eqref{acon1} is in this
equivalence class. Then, there exists $u(z) \in \C(z)$ such that
\begin{multline}    \label{minus}
\begin{pmatrix} 1 & 0 \\ u(zq) & 1 \end{pmatrix} \begin{pmatrix}
  0 & \Lambda(z) \\ -\Lambda(z)^{-1} & \Lambda(z)
  T(z) \end{pmatrix} \begin{pmatrix} 1 & 0 \\ -u(z) & 1 \end{pmatrix}
\\ = \begin{pmatrix} \zeta Q_+(qz)Q^{-1}_+(z)&\Lambda(z)\\
   0 &  \zeta^{-1} Q_+^{-1}(qz)Q_+(z)
 \end{pmatrix}.
\end{multline}
Indeed, $u(z)$ is uniquely determined by the equation $u(z) =
-\zeta\frac{Q_+(zq)}{Q_+(z)} \Lambda(z)^{-1}$. Substituting this into
equation \eqref{minus} gives
\begin{equation}    \label{TQ}
T(z) = \zeta \frac{1}{\Lambda(zq)} \frac{Q_+(zq^2)}{Q_+(zq)} + \zeta^{-1}
\frac{1}{\Lambda(z)} \frac{Q_+(z)}{Q_+(zq)}.
\end{equation}

This equation, which we have now obtained on the $q$DE side of the
$q$DE/IM correspondence also makes perfect sense on the IM side; it is
a version of the celebrated {\em Baxter $TQ$-relation} for the XXZ
model. It relates the eigenvalues of the Baxter $Q$-operator and the
eigenvalues of the transfer-matrix $T(z)$ of the two-dimensional
evaluation representation of $U_{q'} \wh\sl_2$, suitably normalized,
acting on a finite-dimensional representation $V$ of $U_{q'} \wh\sl_2$
(here $q'=q^{-2}$); see Example 5.13(i) of \cite{Frenkel:2013uda}.
Note that up to multiplicative shifts by powers of $q$, $\Lambda(z)$
is the product of Drinfeld polynomials of the irreducible factors of
$V$. Moreover, the Bethe Ansatz equations \eqref{BAE sl2} follow from
formula \eqref{TQ} if we assume that $T(z)$ does not have poles at the
points $z=w_kq$, where $\{ w_k \}$ are the zeros of $Q_+(z)$.

The fact that the relation between the canonical coordinates for
$q$-opers and $Q_+(z)$ is equivalent to the Baxter $TQ$-relation was
known before. Indeed, it was shown in \cite{Frenkel1998} that the
$q$-analogue of the Miura transformation arising from the
$q$-Drinfeld-Sokolov reduction of $\SL(2)$ coincides with the formula
for the $q$-character of the fundamental representation of $U_{q'}
\wh\sl_2$. On the other hand, Baxter's $TQ$-relation can also be
interpreted as this $q$-character, in which we substitute the ratio of
the shifts of the Baxter polynomial just as in formula \eqref{TQ}, see
\cite{Frenkel:2013uda}.

In fact, an analogue of the last statement for an arbitrary quantum
affine algebra was conjectured in \cite{Frenkel:ls} and proved in
\cite{Frenkel:2013uda}.  On the other hand,
according to the results and conjectures of \cite{FR:w,Frenkel:ls},
the link between the $q$-Miura transformation arising from
$q$-Drinfeld-Sokolov reduction and the $q$-character homomorphism is
expected to hold for all simply laced groups $G$.

We thus formulate the following conjecture. Recall (see
\cite{Frenkel:2013uda}) that the
generalized Baxter $TQ$-relation expresses the eigenvalues of the
transfer-matrix of the $j$th fundamental representation of
$U_{q'}\ghat$ (suitably normalized) on the space of states determined
by the Drinfeld polynomials $\{ \Lambda_i(z) \}_{i=1,\ldots,r}$ in
terms of the generalized Baxter polynomials $\{ Q^i_+(z)
\}_{i=1,\ldots,r}$ (up to multiplicative shifts by powers of $q$).
(See \cite{Frenkel:2013uda} for the definition of the generalized
Baxter polynomials.)

\begin{Con}
  Suppose that $G$ is simply laced. For any nondegenerate $Z$-twisted
  Miura-Pl\"ucker $(G,q)$-oper, the formula expressing its canonical
  coordinate $T_j(z)$ in terms of the corresponding polynomials $\{
  Q^i_+(z) \}_{i=1,\ldots,r}$ and $\{ \Lambda_i(z) \}_{i=1,\ldots,r}$
  coincides with the generalized Baxter $TQ$-relation.
\end{Con}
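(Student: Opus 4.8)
The plan is to prove the conjecture by showing that the two rational functions of $\{Q^i_+(z)\}$ and $\{\Lambda_i(z)\}$ in question --- the canonical coordinate $T_j(z)$ produced by the $q$-Drinfeld--Sokolov reduction of Theorem~\ref{thm:qds}, and the generalized Baxter $TQ$-relation of \cite{Frenkel:2013uda} --- coincide. The right-hand side is, by \cite{Frenkel:2013uda}, the specialization of the $q$-character $\chi_q(V_{\omega_j})$ of the $j$th fundamental representation of $U_{q'}\wh\g$ obtained by substituting for each Frenkel--Reshetikhin variable $Y_{i,a}$ a fixed ratio of shifts of $Q^i_+$. I would therefore reduce the conjecture to the single assertion that the $q$-Miura transformation arising from $q$-Drinfeld--Sokolov reduction coincides with the $q$-character homomorphism, and then read off the equality of the two formulas for $T_j(z)$.

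First I would compute the left-hand side explicitly. For a nondegenerate $Z$-twisted Miura-Pl\"ucker $(G,q)$-oper the $q$-connection is \eqref{key}, with associated Cartan connection determined by $g_i(z)=\zeta_i Q^i_+(qz)/Q^i_+(z)$ as in \eqref{giz}. Re-expressing this connection in the $B_-$-oper trivialization and applying Theorem~\ref{thm:qds} writes each $T_j(z)$ as the $j$th component of the image of the Cartan connection under the universal $q$-Miura transformation of \cite{Frenkel1998,1998CMaPh.192..631S}. The rank-one instance is already worked out in \eqref{TQ}: there $T(z)$ equals the two-term expression $\zeta\,Q_+(zq^2)/(\Lambda(zq)Q_+(zq))+\zeta^{-1}Q_+(z)/(\Lambda(z)Q_+(zq))$, which is precisely $\chi_q$ of the two-dimensional $U_{q'}\wh\sl_2$-module with $Y_{1,a}$ replaced by the corresponding shift of $g_1$. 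The task is to promote this to every $\omega_j$ and every simply laced $G$.

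To match the two sides I would expand the $N_-(z)$-valued gauge transformation $u'(z)$ of Theorem~\ref{thm:qds} as an ordered product over the positive roots and track the monomial that each elementary root-subgroup factor contributes to $T_j(z)$. The claim to verify is that the collection of monomials so produced, with their coefficients, reproduces term by term the expansion of $\chi_q(V_{\omega_j})$ furnished by the Frenkel--Mukhin algorithm. An induction organized by the partial order on the weights of $V_{\omega_j}$ --- using the associated fundamental $(\SL(2),q)$-opers $A_i(z)$ of Section~\ref{rank2} to control the $i$th lowering step of the $q$-character, with the rank-one identity above as the base case --- appears to be the natural bookkeeping device.

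The hard part will be exactly this identification of the $q$-Drinfeld--Sokolov Miura transformation with the $q$-character homomorphism for all simply laced $G$; this is where the representation-theoretic content resides, the rest being essentially combinatorial bookkeeping. For $G=\SL(2)$ it is the theorem of \cite{Frenkel1998}, while for general simply laced $G$ it is the content of the conjectures of \cite{FR:w,Frenkel:ls}, so a full proof would in effect settle those. Two further points demand care: the standing hypothesis $G\ne E_6$ inherited from Theorem~\ref{thm:qds}, and the verification --- using the nondegeneracy and $q$-distinctness conditions --- that specializing the $q$-character monomials produces no spurious poles, so that the final identity of the two formulas for $T_j(z)$ holds as stated rather than only up to cancellations.
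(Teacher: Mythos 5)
The statement you are addressing is stated in the paper as a \emph{Conjecture}, and the paper offers no proof of it: the only complete computation given is the rank-one case \eqref{TQ}, where the canonical coordinate $T(z)$ of a $Z$-twisted Miura $(\SL(2),q)$-oper is matched by hand with Baxter's $TQ$-relation, and the general simply laced case is left open precisely because it rests on the conjectural identification of the $q$-Miura transformation from $q$-Drinfeld--Sokolov reduction with the $q$-character homomorphism (Conjecture 3 of \cite{FR:w} and the conjectures of \cite{Frenkel:ls}). Your plan reconstructs the intended route faithfully --- compute $T_j(z)$ from Theorem \ref{thm:qds} applied to the $q$-connection \eqref{key}, recognize the right-hand side of the $TQ$-relation as the specialization of $\chi_q(V_{\omega_j})$ under the substitution $Y_{i,a}\mapsto$ (shift of $g_i$), and match term by term --- but it does not close the argument. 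You say so yourself: the ``hard part'' is exactly the $q$-Miura $=$ $q$-character identification, which you defer to the open conjectures of \cite{FR:w,Frenkel:ls} rather than prove. A reduction of one conjecture to another open conjecture is not a proof, and in this case the reduction is essentially the paper's own stated motivation for formulating the conjecture in the first place; the entire representation-theoretic content remains untouched.

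Beyond that central gap, two of the steps you describe as ``bookkeeping'' are themselves substantive. First, even granting the $q$-character identification, you would still need to prove that the canonical coordinate $T_j(z)$ extracted from the Drinfeld--Sokolov normal form \eqref{Ti} is literally the image of $[V_{\omega_j}]$ under that homomorphism evaluated on the Cartan data $g_i(z)=\zeta_i Q^i_+(qz)/Q^i_+(z)$; outside of type $A$ (where one can factor a scalar $q$-difference operator) this matching of the $N_-(z)$-gauge normal form with the Frenkel--Mukhin expansion of $\chi_q(V_{\omega_j})$ is not carried out anywhere and is not a routine induction. Second, Theorem \ref{thm:qds} is only established in the paper for $G\ne E_6$, so your argument as written cannot even define the left-hand side $T_j(z)$ for $E_6$, which is a simply laced group covered by the conjecture; flagging this as a ``point demanding care'' does not resolve it.
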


If $G$ is non-simply laced, the picture becomes more complicated. (See
Conjecture 3, Section 6.3 and Appendix B in \cite{FR:w}.) We hope to
return to it in the future.

\bibliography{cpn1}
\end{document}